\crefname{lemma}{Lemma}{Lemmas}
\crefname{corollary}{Corollary}{Corollaries}
\crefname{theorem}{Theorem}{Theorems}
\crefname{equation}{Equation}{Equations}
\newcommand{\crefnames}[3]{%
\@for\next:=#1\do{%
\expandafter\crefname\expandafter{\next}{#2}{#3}%
}%
}
\newtheorem{theorem}{Theorem}[section]%
\newtheorem{proposition}[theorem]{Proposition}%
\newtheorem{lemma}[theorem]{Lemma}%
\newtheorem{corollary}[theorem]{Corollary}%
\newtheorem{definition}[theorem]{Definition}%
\newenvironment{proofsketch}{%
	  \proof}{\endproof}
\newcommand{\diverges}{{\,\!\uparrow}}
\newcommand{\converges}{{\,\!\downarrow}}
\def\st{\,:\,}
\def\M{{\EM{\mathcal{M}}}}
\def\N{{\EM{\mathcal{N}}}}
\def\T{{\EM{\mathfrak{M}}}}
\def\Bmachine{{\EM{\mathfrak{B}}}}
\def\F{{\EM{\mc{F}}}}
\def\G{{\EM{\mc{G}}}}
\def\H{{\EM{\mc{H}}}}
\def\a{{\EM{\ol{c}}}}
\newcommand{\mfL}{\mathfrak{L}}
\newcommand{\leT}{\leq_{\mathrm{T}}}
\newcommand{\equivT}{\equiv_{\mathrm{T}}}
\newcommand{\pars}{\,\cdot\,}
\newcommand{\defn}[1]{{\bf{#1}}}
\newcommand{\defas}{{\EM{\ :=\ }}}
\def\neighbor{\EM{\mathbf{N}}}
\def\w{\EM{\omega}}
\def\Z{{\EM{{\mbb{Z}}}}}
\def\Nats{{\EM{{\mbb{N}}}}}
\def\Lang{\EM{\mathfrak{A}}}
\def\^{\EM{{}^{\And}}}
\def\:{\colon}
\def\And{\EM{\wedge}}
\def\<{\EM{\langle}}
\def\>{\EM{\rangle}}
\def\nl{\newline}
\def\EM#1{\ensuremath{#1}}
\def\mbb#1{\EM{\mathbb{#1}}}
\def\mbf#1{\EM{\mathbf{#1}}}
\def\mc#1{\EM{\mathcal{#1}}}
\def\ol#1{\EM{\overline{#1}}}
\def\ul#1{\underline{#1}}
\newcommand{\Powerset}{\ensuremath{\mathfrak{P}}}
\definecolor{MyGreen}{rgb}{.75,0,.75}
\definecolor{RealGreen}{rgb}{0,1,0}
\definecolor{DarkGreen}{rgb}{0.1,0.4,0.1}
\definecolor{ActualGreen}{rgb}{0.0,0.5,0.0}
\definecolor{MyBlue}{rgb}{0,0,1}
\definecolor{MyRed}{rgb}{1,0,0}
\definecolor{darkred}{rgb}{0.5,0,0}
\definecolor{darkgreen}{rgb}{0, 0.3,0}
\definecolor{darkblue}{rgb}{0,0,0.6}
\begin{document}

\title{On the computability of graph Turing machines}
\author{Nathanael Ackerman \\ 
\vspace*{-10pt}\ \\
\normalsize
Harvard University\\
\normalsize
Cambridge, MA 02138, USA\\
\normalsize
\texttt{nate@math.harvard.edu}
\and Cameron Freer\\
\vspace*{-10pt}\ \\
\normalsize
Remine\\
\normalsize
Falls Church, VA 22043, USA\\
\normalsize
\texttt{cameron@remine.com}
}
\date{}

\maketitle

\begin{abstract}
	We consider \emph{graph Turing machines}, a model of parallel computation 
	on a graph,
	in which each vertex is only capable of performing one of a finite number of operations. This model of computation is a natural generalization of several well-studied notions of computation, including ordinary Turing machines, cellular automata, and parallel graph dynamical systems.
We analyze the power of computations  that can take place in this model, both in terms of the degrees of computability of the functions that can be computed, and the time and space resources needed to carry out these computations.  We further show that 
	properties of the underlying graph have significant consequences for the power of computation thereby obtained. 
	In particular, we show that every 
	arithmetically definable set can be computed
	by a graph Turing machine in constant time, and that every computably enumerable Turing degree can be computed in constant time and linear space by a
	graph Turing machine whose underlying graph has finite degree.
\end{abstract}

%%%%  %%%%  %%%%  %%%%  %%%%  %%%%  %%%%  %%%%  
\section{Introduction}
%%%%  %%%%  %%%%  %%%%  %%%%  %%%%  %%%%  %%%%  
When studying large networks, it is important to understand what sorts of computations can be
performed in a distributed way on a given network.
In particular, it is natural to consider the 
setting
where each node acts independently in parallel, and where the network is specified separately from the computation to be performed.
In order to study networks whose size is considerably larger than can be held in memory by the computational unit at any single node, it is often useful to model the network as an infinite graph. (For a discussion of modeling large networks via infinite graphs, see, e.g., \cite{MR2555927}.)

We define a notion of \emph{graph Turing machine} that is meant to capture this setting. This notion generalizes several other well-known models of computation, including ordinary Turing machines, cellular automata,
and parallel graph dynamical systems. Each of these models, in turn, occurs straightforwardly as a special case of a graph Turing machine, suggesting that graph Turing machines capture a natural concept of parallel computation on graphs.

A graph Turing machine (henceforth abbreviated as ``graph machine'') performs computation on a 
vertex-labeled edge-colored directed multigraph satisfying certain properties. This notion of computation is 
designed to capture the idea that in each timestep, every vertex performs a limited amount of computation (in parallel, independently of the other vertices), and can only distinguish vertices connected to it when they are connected by different colors of edges.

In this paper we study the functions that can be computed using graph machines, which we call \emph{graph computable} functions.  As we will see, this parallel notion of computation will yield significantly greater computational strength than ordinary Turing machines, even when we impose constraints on the time and space resources allowed for the graph computation, or when we require the underlying graph to be efficiently computable.
We will see that the computational strength of graph machines is exactly that of
$\mbf{0}^{(\w)}$, the Turing degree of true arithmetic (thereby providing another natural construction of this degree).
We also examine the relationship between various properties of the underlying graph (e.g., finiteness of degree) and the computational strength of the resulting graph machines.

%%%%%%%%%%%%%%%%%%%%%%%%%
\subsection{Main results and overview of the paper}
%%%%%%%%%%%%%%%%%%%%%%%%%
We begin by introducing the notions of colored graphs, graph machines, and graph computability (including resource-bounded variants) in \cref{Graph Computing Definitions}.

Our main results fall into two classes: bounds on the computational power of arbitrary computable graph machines, and bounds among machines with an underlying graph every vertex of which has finite degree (which we say is of \emph{finite degree}).

Theorem~\ref{Total graph computable functions are computable from 0^w} states that every graph computable function is Turing reducible to $\mbf{0}^{(\w)}$.
In the other direction, we show in \cref{omega-jump} that this bound is attained by a single graph Turing machine. 

Sitting below $\mbf{0}^{(\w)}$ are the arithmetical Turing degrees, i.e., those less than $\mbf{0}^{(n)}$ 
for some $n \in \Nats$, where $\mbf{0}^{(n)}$ denotes the $n$-fold iterate of the halting problem.
We show in Corollary~\ref{Pointwise computable Turing degrees} that every arithmetical Turing degree contains a 
function that is
graph Turing computable, moreover in constant time.
(It remains open whether every degree below $\mbf{0}^{(\w)}$ can be achieved.)

We next 
show in Corollary~\ref{Finite degree implies computable in 0'} that 
functions determined by 
graph machines with underlying graph of finite degree
are reducible to
the halting problem, $\mbf{0}'$. 
Further, we show in Corollary~\ref{Constant degree implies computable} that if we restrict to graph 
machines
where every vertex has the same (finite) degree, then the resulting graph 
computable function is computable by an ordinary Turing machine. 

We also show in Theorem~\ref{Lower bound on finite degree main result} that every Turing degree below $\mbf{0}'$ is the degree of some linear-space graph computable function with underlying graph of finite degree. 
When the Turing degree is
$k$-computably enumerable (for some $k \in \Nats$) then we may further take the graph machine to run in constant time.

In \cref{Other graph-theoretic properties}, we sketch
two properties of the graph machine or its
underlying graph that do not restrict which functions are graph computable.
We show, in \cref{eff-comp}, that we may take the graph machine to be itself efficiently computable, without changing the degree of functions thereby computed. 
We also show, in \cref{symmetric-ok},
that the requirement that the graph be directed adds no generality: any function which can be computed by a graph machine can be computed by a graph machine whose underlying graph is symmetric. 

In 
\cref{representations-sec},
we examine how several other models of computation
relate to graph machines. 
A graph Turing machine can be thought of as a generalization of an ordinary Turing machine, where the one-dimensional read-write tape is replaced by an arbitrary graph. In \cref{otm-subsec}, we describe how to simulate an ordinary Turing machine via a graph Turing machine.
One of the fundamental difficulties when generalizing ordinary Turing machines to graph Turing machines is to figure out how to determine the location of the head. We have taken an approach whereby there is no unique head, and hence each node processes its own data. The approach taken by \cite{MR3163227} is to have a unique head, but to allow its location to be nondeterministic, in that it is allowed to move to any vertex connected to the current location that is displaying an appropriate symbol. (Note that they call their different notion a ``graph Turing machine'' as well.)

Our graph machines can also be viewed directly as dynamical systems, or as a sort of network computation. Cellular automata can be simulated by graph machines, as we show in 
\cref{cellular-subsec}, and parallel graph dynamical systems are essentially equivalent to the finite case of graph machines (see \cref{gds-subsec}).
Indeed, parallel graph dynamical systems include the above case of cellular automata, and also boolean networks and other notions of network computation, as described in \cite[\S2.2]{MR3332130}.

We conclude with \cref{Possible-extensions} on possible extensions of graph machines, and 
\cref{Open Questions} on open questions.

%%%%  %%%%  %%%%  %%%%  %%%%  %%%%  
%%%%  %%%%  %%%%  %%%%  %%%%  %%%%  
%%%%  %%%%  %%%%  %%%%  %%%%  %%%%  
\subsection{Notation}
\label{Notation}
%%%%  %%%%  %%%%  %%%%  %%%%  %%%%  
When $f\:A \to B$ is a partial function and $a \in A$, we let $f(a) \diverges$ signify that $f$ is not defined at $a$, and $f(a) \converges$ signify that $f(a)$ is defined at $a$. Suppose $f, g\:A \to B$ are partial functions. For $a \in A$ we say that $f(a) \cong g(a)$ if either ($f(a) \diverges$ and $g(a) \diverges$) or ($f(a)\converges$, $g(a) \converges$, and $f(a) = g(a)$). We say that $f \cong g$ when $(\forall a \in A)\ f(a) \cong g(a)$. If $f\:A \to \prod_{i \leq n}B_i$  and $k \leq n\in\Nats$, then we let $f_{[k]}\:A \to B_k$ be the composition of $f$ with the projection map onto the $k$'th coordinate. 

Fix an enumeration of computable partial functions, and for $e \in \Nats$, let $\{e\}$ be the $e$'th such function in this list. 
If $X$ and $Y$ are sets with $0 \in Y$, let $Y^{<X}= \{\eta\:X \to Y \st |\{a\st \eta(a) \neq 0\}| < \w\}$, i.e., the collection of functions from $X$ to $Y$ for which all but finitely many inputs yield $0$. 
(Note that by this notation we do \emph{not} mean partial functions from $X$ to $Y$ supported on fewer than $|X|$-many elements.)
For a set $X$, let $\Powerset_{<\w}(X)$ denote the collection of finite subsets of $X$. Note that the map which takes a subset of $X$ to its characteristic function is a bijection between $\Powerset_{<\w}(X)$ and $\{0,1\}^{<X}$. 

When working with computable graphs, 
sometimes the underlying set of the graph will be a finite coproduct of finite sets and $\Nats^k$ for $k\in\Nats$. The standard notion of computability for 
$\Nats$ transfers naturally to such settings, making implicit use of the
computable bijections between $\Nats^k$ and $\Nats$, and between $\coprod_{i \leq k} \Nats$ and $\Nats$, for $k \in \Nats$.
We will sometimes say \emph{computable set} to refer to some computable subset (with respect to these bijections) of such a finite coproduct $X$, and \emph{computable function} to refer to a computable function
having domain and codomain of that form or of the form $F^{<X}$ for some finite set $F$.

For sets $X, Y \subseteq \Nats$,
we write $X \leT Y$ when $X$ is Turing reducible to $Y$ (and similarly for functions and other computably presented countable objects). 

In several places we make use of (ordinary) Turing machines, described in terms of their state space, transition function, and alphabet of symbols.
For more details on results and notation in computability theory, see \cite{MR882921}.

%%%%  %%%%  %%%%  %%%%  %%%%  %%%%  %%%%  %%%%  
\section{Graph computing}
\label{Graph Computing Definitions}
%%%%  %%%%  %%%%  %%%%  %%%%  %%%%  %%%%  %%%%  

In this section we will make precise what we mean by graph Turing machines as well as graph computable functions.

\begin{definition}
\label{Colored graph}
A \defn{colored graph} is a tuple $\G$
of the form $(G, (L, V), (C, E), \gamma)$
where 
\begin{itemize}
\item $G$ is a set, called the \defn{underlying set} or the \defn{set of vertices},

\item $L$ is a set called the \defn{set of labels} and $V\: G \to L$ is the \defn{labeling function}. 

\item $C$ is a set called the \defn{set of colors} and $E\: G \times G \to \Powerset_{< \w}(C)$ is called the \defn{edge coloring}

\item $\gamma\: L \to \Powerset_{<\w}(C)$ is called the \defn{allowable colors function} and satisfies
\[
(\forall v, w \in G)\ E(v, w) \subseteq \gamma(V(v)) \cap \gamma(V(w)). 
\]
\end{itemize}

A \defn{computable colored graph} is a colored graph along with indices witnessing that $G$, $L$, and $C$ are computable sets and that $V$,$E$, and $\gamma$ are computable functions. 
\end{definition}

The intuition is that a colored graph is an edge-colored directed multigraph where each vertex is assigned a label, and such that among the edges between any two vertices, there is at most one edge of each color, and only finitely many colors appear (which must be among those given by $\gamma$ applied to the label). Eventually, we will allow each vertex to do some fixed finite amount of computation, and we will want vertices with the same label to perform the same computations. 

For the rest of the paper by a \emph{graph} we will always mean a colored graph, and will generally write the symbol $\G$ (possibly decorated) to refer to graphs. 

\begin{definition}
A graph $\G$ is \defn{finitary} when its set of labels is finite.
\end{definition}
For a finitary graph, without loss of generality one may further assume that the set of colors is finite and that every color is allowed for every label.

Notice that graphs, as we have defined them, are allowed to have infinitely many labels and edge colors, so long as the edges connecting to any particular vertex are assigned a finite number of colors, depending only on the label. However, there is little harm in the reader assuming that the graph is finitary.

	The main thing lost in such an assumption is the strength of various results providing upper bounds on the functions that can be computed using graph machines, as we take care to achieve our lower bounds (\cref{Pointwise computable Turing degrees}, 
	\cref{omega-jump}, and
\cref{Lower bound on finite degree main result})
	using finitary graphs.

Let $\G$ be a graph with underlying set $G$, and suppose $A \subseteq G$. Define $\G|_A$ to be the graph with underlying set $A$ having the same set of labels and set of colors as $\G$, such that the labeling function, edge coloring function, and allowable colors function of $\G|_A$ are the respective restrictions to $A$.

\begin{definition}
	A \defn{graph Turing machine}, or simply \defn{graph machine}, is
a tuple $\T = (\G, (\Lang, \{0,1\}), (S, s, \alpha), T)$
where 

\begin{itemize}
\item $\G = (G, (L, V), (C, E), \gamma)$ is a graph, called the \defn{underlying graph}. We will speak of the components of the underlying graph as if they were components of the graph machine itself.
	For example, we will call $G$ the \emph{underlying set} of $\T$ as well as of $\G$.

\item $\Lang$ is a finite set, called the \defn{alphabet}, having distinguished symbols $0$ and $1$.

\item
	$S$ is a countable set called the \defn{collection of states}.
\item
	$\alpha\:L \to \Powerset_{<\w}(S)$ is the \defn{state assignment function}.
\item
	$s$ is a distinguished state, called the \defn{initial state}, such that $s \in \alpha(\ell)$ for all
	$\ell \in L$. 

\item $T\: L \times \Powerset_{<\w}(C) \times \Lang \times S \to \Powerset_{<\w}(C) \times \Lang \times S$ is a function, called the \defn{lookup table}, such that for each $\ell \in L$ and each $z \in \Lang$,
\begin{itemize}
	\item if $c \not \subseteq \gamma(\ell)$ or $t \not \in \alpha(\ell)$, then $T(\ell, c, z, t) = (c, z, t)$, i.e., whenever the inputs to the lookup table are not compatible with the structure of the graph machine, the machine acts trivially, and

	\item if $t \in \alpha(\ell)$ and $c \subseteq \gamma(\ell)$, then $T_{[1]}(\ell, c, z, t) \subseteq \gamma(\ell)$
		and $T_{[3]}(\ell, c, z, t) \in \alpha(\ell)$, i.e.,
		whenever the inputs are compatible with the structure, so are the outputs. 
\end{itemize}
Further, for all $\ell \in L$, we have $T(\ell, \emptyset, 0, s) = (\emptyset, 0, s)$, i.e., if any vertex is in the initial state, currently displays $0$, and has received no pulses, then that vertex doesn't do anything in the next step.  This lookup table can be thought of as specifying a \emph{transition function}.
\end{itemize}

	A \defn{$\G$-Turing machine} (or simply a \defn{$\G$-machine})
	is a graph machine with underlying graph $\G$.

A \defn{computable graph machine} is a graph machine along with indices witnessing that $\G$ is a computable graph, $S$ is computable set, and $\alpha$ and $T$ are computable functions. 
\end{definition}

If $A$ is a subset of the underlying set of $\T$, then $\T|_A$ is the graph machine 
with underlying graph $\G|_A$ having the same alphabet, states, and lookup table as $\T$.

The intuition is that a graph machine should consist of a graph where at each timestep, every vertex is assigned a state and an element of the alphabet, which it displays. To figure out how these assignments are updated over time, we apply the transition function determined by the lookup table which tells us, given the label of a vertex, its current state, and its currently displayed symbol, along with the colored \emph{pulses} the vertex has most recently received, what state to set the vertex to, what symbol to display next, and what colored pulses to send to its neighbors.

For the rest of the paper, $\T = (\G, (\Lang, \{0,1\}), (S, s, \alpha), T)$ will denote a \emph{computable} graph machine whose underlying (computable) graph is $\G = (G, (L, V), (C, E), \gamma)$.

\begin{definition}
	A \defn{valid configuration} of $\T$ is a function $f\: G \to \Powerset_{<\w}(C) \times \Lang \times S$ such that for all $v \in G$, we have 
	\begin{itemize}
		\item $f_{[1]}(v) \subseteq \gamma(V(v))$ and
		\item $f_{[3]}(v) \in \alpha(V(v))$. 
	\end{itemize}
\end{definition}

In other words, a valid configuration is an assignment of colors, labels, and states
that is consistent with the underlying structure of the graph machine, in the sense that the pulses received and the underlying state at each vertex are consistent with what its allowable colors function and state assignment function permit.

\begin{definition}
A \defn{starting configuration} of $\T$ is a function $f\: G \to \Powerset_{<\w}(C) \times \Lang \times S$ such that
\begin{itemize}
	\item $(\forall v \in G)\ f_{[1]}(v) = \emptyset$, 
	\item $f_{[2]} \in \Lang^{< G}$, and
	\item $(\forall v \in G)\ f_{[3]}(v) = s$.
\end{itemize} 
We say that $f$	is \defn{supported} on a finite set $A\subseteq G$ if $f_{[2]}(v) = 0$ for all $v\in G\setminus A$.

Note that any starting configuration of $\T$ is always a valid configuration of $\T$. 
\end{definition}

In other words, a starting configuration 
is an assignment
in which
all vertices are in the initial state $s$, no pulses have been sent, and only finitely many vertices display a non-zero symbol.

Note that if $A$ is a subset of the underlying set of $\T$ and $f$ is a valid configuration for $\T$, then $f|_A$ is also a valid configuration for $\T|_A$. Similarly, if $f$ is a starting configuration for $\T$, then $f|_A$ is a starting configuration for $\T|_A$. 

\begin{definition}
\label{Run on a valid configuration}
Given a valid configuration $f$ for $\T$, the \defn{run} of $\T$ on $f$ is the function $\<\T, f\>\: G \times \Nats \to \Powerset_{<\w}(C) \times \Lang \times S$ satisfying, for all $v\in G$,
\begin{itemize}
\item $\<\T, f\>(v, 0) = f(v)$ and

\item 
	$\<\T, f\>(v, n+1) = T(V(v), X, z, t)$ 
	for all $n \in \Nats$,
		where
\begin{itemize}
	\item $z = \<\T, f\>_{[2]}(v, n)$ and $t = \<\T, f\>_{[3]}(v, n)$, and
	\item $X = \bigcup_{w \in G}
		\bigl(	E(w, v) \cap \<\T, f\>_{[1]}(w, n)\bigr)$.
\end{itemize} 
\end{itemize}

We say that a run \defn{halts at stage $n$} if 
	$\<\T, f\>(v, n) = \<\T, f\>(v, n+1)$ for all $v\in G$. 
\end{definition}

A run of a graph machine is the function which takes a valid configuration for the graph machine and a natural number $n$, and returns the result of letting the graph machine process the valid configuration for $n$-many timesteps. 

The following lemma is immediate from Definition~\ref{Run on a valid configuration}. 

\begin{lemma}
	\label{run on a valid configuration lemma}
Suppose $f$ is a valid configuration for $\T$. 
For $n\in\Nats$, define
$f_n \defas \<\T, f\>(\pars, n)$.
	Then the following hold.

\begin{itemize}
\item 
	For all $n \in \Nats$,  the function
	$f_n$ is a valid configuration for $\T$.

\item For all $n, m \in \Nats$ and $v \in G$,
\[
		f_{n+m}(v) = \<\T, f_n\>(v, m) = \<\T, f\>(v, n+ m).
\]
\end{itemize}
\end{lemma}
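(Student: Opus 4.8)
The plan is to prove both bullet points of Lemma~\ref{run on a valid configuration lemma} directly from Definition~\ref{Run on a valid configuration}, the first by induction on $n$ and the second by induction on $m$. These are precisely the sort of routine bookkeeping claims that make subsequent arguments clean, so the proof should be short; the only mild subtlety is making sure the update rule $T$ really does preserve validity, which is exactly why the lookup table was required to satisfy the compatibility conditions in the definition of a graph machine.

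For the first bullet, I would argue by induction on $n$. The base case $n = 0$ is immediate since $f_0 = f$ is assumed valid. For the inductive step, suppose $f_n$ is valid; fix $v \in G$ and write $z = (f_n)_{[2]}(v)$, $t = (f_n)_{[3]}(v)$, and $X = \bigcup_{w \in G}\bigl(E(w,v) \cap (f_n)_{[1]}(w)\bigr)$, so that $f_{n+1}(v) = T(V(v), X, z, t)$. Validity of $f_n$ gives $t \in \alpha(V(v))$. I would then observe that $X \subseteq \gamma(V(v))$: each term $E(w,v) \cap (f_n)_{[1]}(w)$ is contained in $E(w,v)$, which by the defining property of $\gamma$ in Definition~\ref{Colored graph} is a subset of $\gamma(V(v)) \cap \gamma(V(w)) \subseteq \gamma(V(v))$; a union of subsets of $\gamma(V(v))$ is again a subset, and it is finite since only finitely many colors appear on edges into $v$ (again using the finiteness built into $E$ and $\gamma$). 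Hence the inputs $(V(v), X, z, t)$ to $T$ are compatible with the structure, so the second clause of the lookup-table condition applies and yields $T_{[1]}(V(v), X, z, t) \subseteq \gamma(V(v))$ and $T_{[3]}(V(v), X, z, t) \in \alpha(V(v))$. These are exactly the two conditions saying $f_{n+1}(v)$ is valid at $v$, and since $v$ was arbitrary, $f_{n+1}$ is a valid configuration.

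For the second bullet I would fix $n$ and induct on $m$. When $m = 0$ both sides equal $f_n(v) = \<\T, f\>(v,n)$, using $\<\T, f_n\>(v,0) = f_n(v)$ from the first clause of Definition~\ref{Run on a valid configuration}. For the inductive step, assume $f_{n+m}(v) = \<\T, f_n\>(v,m) = \<\T, f\>(v, n+m)$ for all $v$; I want the same with $m$ replaced by $m+1$. Both $\<\T, f\>(\cdot, n+m+1)$ and $\<\T, f_n\>(\cdot, m+1)$ are computed by applying the same recurrence from Definition~\ref{Run on a valid configuration} to the configurations at time $n+m$ and time $m$ respectively; by the induction hypothesis those two configurations agree pointwise, and the recurrence at $v$ depends only on $V(v)$ and on the values of the previous configuration at $v$ and at vertices $w$ with $E(w,v) \neq \emptyset$, all of which coincide. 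Hence the two results agree pointwise, giving $f_{n+m+1}(v) = \<\T, f_n\>(v, m+1) = \<\T, f\>(v, n+m+1)$. Note the first equality $f_{n+m+1} = \<\T, f\>(\cdot, n+m+1)$ is just the definition of $f_{n+m+1}$.

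The main (and only real) obstacle is the finiteness of $X$ in the first bullet: one needs that $\bigcup_{w} \bigl(E(w,v) \cap (f_n)_{[1]}(w)\bigr)$ is a finite set so that it is a legitimate element of $\Powerset_{<\w}(C)$ and hence a valid input to $T$. This is not automatic from the infinitely many vertices $w$, but it follows because $(f_n)_{[1]}(w) \subseteq \gamma(V(w))$ is finite for each $w$ and, more to the point, every color in the union lies in the finite set $\gamma(V(v))$, so the union is a subset of a fixed finite set. Once this is noted the rest is mechanical.
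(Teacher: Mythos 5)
Your proof is correct: the paper gives no explicit argument, declaring the lemma ``immediate from Definition~\ref{Run on a valid configuration},'' and your two inductions simply carry out that routine unwinding in full. Your observation that validity is preserved because $X \subseteq \gamma(V(v))$ (via the constraint $E(w,v) \subseteq \gamma(V(v)) \cap \gamma(V(w))$ from Definition~\ref{Colored graph}) is exactly the point the paper is implicitly relying on, so there is nothing to add.
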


We now describe how a graph machine defines a function. 

\begin{definition}
	For $x \in \Lang^{<G}$, let $\widehat{x}$ be the valid configuration such that 
	$\widehat{x}(v) = (\emptyset, x(v), s)$
	for all $v \in G$.
Define 
\[
\{\T\}\:\Lang^{<G} \to \Lang^{G}
\]
to be the partial function such that 
\begin{itemize}
	\item $\{\T\}(x)\diverges$, i.e., is undefined, if the run $\<\T, \widehat{x}\>$ does not halt, and

	\item $\{\T\}(x) = y$ if $\<T, \widehat{x}\>$ halts at stage $n$ and
	for all  $v \in G$,
\[
		y(v) = \<\T, \widehat{x}\>_{[2]}(v, n).
\] 
\end{itemize}

Note that $\{\T\}(x)$
	is well defined as $\widehat{x}$ is always a starting configuration for $\T$. 
\end{definition}

While in general, the output of $\{\T\}(x)$ might have infinitely many non-zero elements, for purposes of considering which Turing degrees are graph computable, we will mainly be interested in the case where $\{\T\}(x) \in \Lang^{<G}$, i.e., when all but finitely many elements of $G$ take the value $0$.

When defining a function using a graph machine, it will often be convenient to have extra vertices whose labels don't affect the function being defined, but whose presence allows for a simpler definition. 
These extra vertices can be thought as ``scratch paper'' and play the role of extra tapes (beyond the main input/output tape) in a multi-tape Turing machine. We now make this precise. 

\begin{definition}
\label{(G X)-computable}
Let $X$ be an infinite computable subset of $G$.
A function $\zeta\: \Lang^{<X} \to \Lang^{X}$ is \defn{$\<\G, X\>$-computable} via $\T$ if 
\begin{itemize}
\item[(a)] $\{\T\}$ is total,

\item[(b)] for $x, y \in \Lang^{<G}$, if 
	$x|_{X} = y|_{X}$
	then $\{\T\}(x) = \{\T\}(y)$,
	and

\item[(c)] for all $x \in \Lang^{<G}$, for all $v \in G \setminus X$, we have $\{\T\}(x)(v) = 0$, i.e., when $\{\T\}(x)$ halts, $v$ displays $0$, and

\item[(d)] for all $x \in \Lang^{<G}$, we have $\{\T\}(x)|_{X} = \zeta(
	x|_{X})$.
\end{itemize}

	A function is \defn{$\G$-computable via $\T$} if it is
	$\<\G, X\>$-computable via $\T$
for some infinite computable $X\subseteq G$.
	A function is \defn{$\G$-computable} if it is
	$\G$-computable via $\T^\circ$ for some computable $\G$-machine $\T^\circ$.
A function is
\defn{graph Turing computable}, or simply \defn{graph computable}, when it is $\G^\circ$-computable for some computable graph $\G^\circ$.
\end{definition}

The following lemma captures the sense in which  functions that are 
$\<\G, X\>$-computable  via $\T$
are determined by their restrictions to $X$.

\begin{lemma}
	Let $X$ be an infinite computable subset of $\G$.
There is at most one function $\zeta\: \Lang^{<X} \to \Lang^{X}$ that is $\<\G, X\>$-computable via $\T$, and it must be Turing equivalent to $\{\T\}$.
\end{lemma}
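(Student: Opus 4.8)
The plan is to prove both halves of the statement together, since they reinforce one another. For uniqueness, suppose $\zeta$ and $\zeta'$ are both $\<\G, X\>$-computable via $\T$. By clause~(d), for every $x \in \Lang^{<G}$ we have $\zeta(x|_X) = \{\T\}(x)|_X = \zeta'(x|_X)$. Since every element of $\Lang^{<X}$ arises as $x|_X$ for some $x \in \Lang^{<G}$ (extend by $0$ on $G \setminus X$, which lands in $\Lang^{<G}$ because $X$ is a subset of $G$ and the support stays finite), this forces $\zeta = \zeta'$ as functions $\Lang^{<X} \to \Lang^X$.

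For the Turing equivalence, I would argue each reduction separately. For $\zeta \leT \{\T\}$: given $\sigma \in \Lang^{<X}$, extend it to $\widehat\sigma \in \Lang^{<G}$ by setting it to $0$ off $X$ (this is a computable operation, using that $X$ is a computable subset of the computable set $G$), then query the oracle $\{\T\}$ at $\widehat\sigma$; by clause~(a) this halts, and by clause~(d), $\zeta(\sigma) = \{\T\}(\widehat\sigma)|_X$, and restricting to the computable set $X$ is computable. For $\{\T\} \leT \zeta$: given arbitrary $x \in \Lang^{<G}$, first compute $x|_X \in \Lang^{<X}$, query the oracle $\zeta$ to get $\zeta(x|_X) \in \Lang^X$; by clauses~(d) and~(c), $\{\T\}(x)$ is the function on $G$ that equals $\zeta(x|_X)$ on $X$ and is identically $0$ on $G \setminus X$ — this is computable from $\zeta(x|_X)$ together with the (computable) membership test for $X$. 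Note both $x$ and the output live in the appropriate $\Lang^{<G}$ or $\Lang^G$ spaces, which have the standard computable presentations since $G$ is a computable set and $\Lang$ is finite.

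The only real subtlety, and the step I would be most careful about, is the bookkeeping around the ``objects'' being Turing-reduced: $\{\T\}$ has domain $\Lang^{<G}$ and codomain $\Lang^G$, whereas $\zeta$ has domain $\Lang^{<X}$ and codomain $\Lang^X$, so ``$\zeta \equivT \{\T\}$'' must be read via the fixed computable bijections and the coding conventions of \cref{Notation} (elements of $F^{<X}$ coded as finite subsets, total functions $\Lang^G$ treated as oracles). Once one fixes that $X$ and $G$ are computable sets in the sense of that section and that the extend-by-$0$ and restrict-to-$X$ maps are computable in both directions, the two reductions above are immediate; there is no circularity because clause~(a) guarantees totality, so every oracle query converges. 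I expect no genuine obstacle beyond making these coding conventions explicit.
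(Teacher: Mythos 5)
Your proof is correct and follows essentially the same route as the paper: uniqueness falls out of clause~(d) together with the fact that restriction $x \mapsto x|_X$ is onto $\Lang^{<X}$, and the Turing equivalence is established by the extend-by-$0$ and restrict-to-$X$ reductions using clauses~(a), (c), and~(d). The paper's version additionally invokes clause~(b) to phrase the argument via the induced map $\delta$ on $\Lang^{<X}$, but this is a cosmetic difference, not a substantive one.
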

\begin{proof}
	Suppose there is some $\zeta\: \Lang^{<X} \to \Lang^{X}$ that is $\<\G, X\>$-computable via $\T$.
	Then by Definition~\ref{(G X)-computable}(a), $\{\T\}$ is total. By Definition~\ref{(G X)-computable}(b), for any $x \in \Lang^{<G}$, the value of $\{\T\}(x)$ only depends on $x|_{X}$, and so $\{\T\}$ induces a function $\delta\: \Lang^{<X} \to \Lang^{G}$.
By Definition~\ref{(G X)-computable}(d), 
the map $\Lang^{<X} \to \Lang^{X}$ given by $a\mapsto \delta(a)|_{X}$ is the same as $\zeta$. Therefore there is at most one function $\Lang^{<X} \to \Lang^{X}$ that is $\<\G, X\>$-computable via $\T$. 

By Definition~\ref{(G X)-computable}(c), $\{\T\}(x)|_{G \setminus X}$ is the constant $0$ function for all $x$. 
Therefore $\{\T\}$ is Turing equivalent to $\zeta$.
\end{proof}

%%%%%%%%%%%%%%%%%%%%%
\subsection{Resource-bounded graph computation}
%%%%%%%%%%%%%%%%%%%%%

Just as one may consider
ordinary computability restricted by bounds on the time and space needed for the computation, one may devise and study complexity classes for graph computability.
However, as we will see in 
\S\S\ref{infinite-degree-lower-bounds} and \ref{finite-degree-lower-bounds}, unlike with ordinary computability, a great deal can be done with merely \emph{constant time},
and
in the finitary case, our key constructions can be carried out by machines that run in \emph{linear space} --- both of which define here.

	Throughout this subsection, $Q$ will be a collection of functions from $\Nats$ to $\Nats$.

\begin{definition}
	A function $\zeta$
	is \defn{$Q$-time computable via $\T$} if 
\begin{itemize}
\item[(a)] $\zeta$ is $\G$-computable via $\T$, and

\item[(b)] 
	there is a $q\in  Q$ such that
	for all finite connected subgraphs $A\subseteq G$ and all starting configurations $f$ of $\T$ 
supported on $A$,
\[
	(\forall v\in G) \ \ \<\T, f\>\bigl(v, q(|A|)\bigr) = \<\T, f\>\bigl(v, q(|A|)+1\bigr)
\]
i.e., $\T$ halts in at most $q(|A|)$-many timesteps.
\end{itemize}
A function is \defn{$Q$-time graph computable} if it is $Q$-time computable 
	via $\T^\circ$ for 
some 	
	graph machine $\T^\circ$.
\end{definition}

In this paper, we consider mainly time bounds where $Q$ is the collection of constant functions $\{\lambda x.n \st n\in\Nats\}$, in which case we speak of \emph{constant-time graph computability}.

When bounding the space used by a computation, we will consider graph computations that depend only on a ``small'' neighborhood of the input.

\begin{definition}
	\label{nneigh}
	For each $A \subseteq G$ and $n \in \Nats$, 
the \defn{$n$-neighborhood} of $A$, written
	$\neighbor_n(A)$, is 
defined by induction as follows. \nl\nl
\ul{Case $1$}: The $1$-neighborhood of $A$ is 
\[
\neighbor_1(A) \defas A \cup \{v \in G\st (\exists a \in A)\ E(v,a) \cup E(a,v) \neq \emptyset\}.
\]
\ul{Case $k+1$}: The $k+1$-neighborhood of $A$ is 
\[
\neighbor_{k+1}(A) = \neighbor_1(\neighbor_k(A)).
\]
\end{definition}

\begin{definition}
	A graph machine $\T$ \defn{runs in $Q$-space} if 
	$\{\T\}$ is total and 
	there are $p, q\in Q$ such that for any finite connected subgraph $A\subseteq G$ and any starting configuration $f$ of $\T$ that is supported on $A$, we have $|\neighbor_{p(n)}(A)| \leq q(n)$ and
	\[
\<\T, f\>(v, \pars) = 
			\<\T|_{\neighbor_{p(n)}(A)}, f|_{\neighbor_{p(n)}(A)}\> (v, \pars)\]
for all $v\in A$, 
		where $n \defas |A|$.

A function $\zeta$ is
	\defn{$Q$-space graph computable} via $\T$ if $\zeta$ is 
$\G$-computable 
	via $\T$ where $\T$ runs in $Q$-space.
	We say that $\zeta$ is  \defn{$Q$-space graph computable} if it is $Q$-space graph computable via $\T^\circ$ for some graph machine $\T^\circ$.
\end{definition}

In this paper, the main space bound we consider is where $Q$ is the collection of linear polynomials, yielding \emph{linear-space graph computability}.
This definition generalizes the standard notion of linear-space computation, and reduces to it in the case of a graph machine that straightforwardly encodes an ordinary Turing machine (for details of the encoding see \cref{otm-subsec}).
For such encodings, the only starting configurations yielding nontrivial computations are those supported on a neighborhood containing the starting location of the Turing machine read/write head.
In the case of arbitrary computable graph machines, computations can meaningfully occur from starting configurations supported on arbitrary connected subgraphs. This is why the bound on the size of the neighborhoods required to complete the computation is required to depend only on the size of a connected subgraph the starting configuration is supported on.

One way to view space-bounded graph computation is as computing functions that need 
only a finite amount of a graph to perform their computation, where this amount depends only on the ``size'' of the input (as measured by the size of a connected support of the starting configuration corresponding to the input). 
This perspective is especially natural if one thinks of the infinite graph as a finite graph that is larger than is needed for all the desired inputs.

%%%%  %%%%  %%%%  %%%%  %%%%  %%%%  %%%%  %%%%  
\section{Arbitrary graphs}
\label{arbitrary-sec}
%%%%  %%%%  %%%%  %%%%  %%%%  %%%%  %%%%  %%%%  

In this section, we consider the possible Turing degrees of total graph computable functions. We begin with 
a bound for finite graphs.

\begin{lemma}
	\label{lemma-finite-graph}
	Suppose $G$ is finite.
	Let $\mbf{h}$ be the map which takes a valid configuration $f$ for $\T$ and returns $n \in \Nats$ if $\<\T, f\>$ halts at stage $n$ (and not earlier), and returns $\infty$ if $\<\T, f\>$ doesn't halt. Then 
\begin{itemize}
\item $\<\T, f\>$ is computable and

\item $\mbf{h}$ is computable. 
\end{itemize}
\end{lemma}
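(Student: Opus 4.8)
The plan is to exploit the fact that, when $G$ is finite, every valid configuration of $\T$ is a finite object and, moreover, all valid configurations lie in a single \emph{finite} set. Indeed, if $f$ is a valid configuration then $f_{[1]}(v)\subseteq\gamma(V(v))$ and $f_{[3]}(v)\in\alpha(V(v))$ for every $v$, and both $\gamma(V(v))$ and $\alpha(V(v))$ are finite; since $G$ is finite, the set
\[
\cK \defas \prod_{v\in G}\bigl(\Powerset_{<\w}(\gamma(V(v)))\times\Lang\times\alpha(V(v))\bigr)
\]
is finite, contains every valid configuration, and (using the indices witnessing that $G$, $V$, $\gamma$, $\alpha$ are computable and that $\Lang$ is finite) is a computable set that can be effectively enumerated. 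By \cref{run on a valid configuration lemma}, the configurations $f_n \defas \<\T, f\>(\pars, n)$ are all valid, hence all lie in $\cK$.

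For the first bullet, I would prove by induction on $n$ that the map $(v,n)\mapsto\<\T, f\>(v,n)$ is computable. The base case is $\<\T, f\>(v,0)=f(v)$, which is computable since $f$ is a finite function (being a function out of the finite set $G$). For the inductive step, having computed the finite object $f_n$ from $f_{n-1}$, one computes $\<\T, f\>(v,n+1)=T(V(v),X,z,t)$ directly from \cref{Run on a valid configuration}: here $z=f_{n,[2]}(v)$ and $t=f_{n,[3]}(v)$ are simply read off, and $X=\bigcup_{w\in G}\bigl(E(w,v)\cap f_{n,[1]}(w)\bigr)$ is a finite union, over the finite set $G$, of intersections of computable sets, hence computable. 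As $V$, $E$, and $T$ are computable, the whole step is computable and the induction goes through.

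For the second bullet, the key point is that a run of $\T$ on $f$ is a deterministic orbit $f_0,f_1,f_2,\dots$ inside the finite set $\cK$, hence is eventually periodic. I claim the run halts if and only if $f_n=f_{n+1}$ for some $n<|\cK|$. One direction is immediate from the definition of halting. For the other, suppose the run halts and let $N$ be least with $f_N=f_{N+1}$; by determinism (and \cref{run on a valid configuration lemma}) $f_m=f_N$ for all $m\geq N$. If $f_i=f_j$ with $i<j\leq N$, then determinism forces the orbit to be periodic with period $j-i$ from index $i$ on, so $f_i=f_{i+m(j-i)}$ for every $m$; choosing $m$ large enough that $i+m(j-i)\geq N$ yields $f_i=f_N$, and applying one transition step then gives $f_{i+1}=f_{N+1}=f_N=f_i$, so $f_i=f_{i+1}$, contradicting minimality of $N$. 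Hence $f_0,\dots,f_N$ are pairwise distinct elements of $\cK$, so $N<|\cK|$. Since $|\cK|$ is a fixed finite number and $(v,n)\mapsto\<\T, f\>(v,n)$ is computable by the first bullet, one decides halting by computing $f_0,\dots,f_{|\cK|}$ and searching for a stage with $f_n=f_{n+1}$; if one is found, $\mbf{h}(f)$ is the least such $n$, and otherwise $\mbf{h}(f)=\infty$.

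I do not expect a serious obstacle; the only point needing care is detecting \emph{non}-halting, which is exactly what the pigeonhole bound $N<|\cK|$ supplies (and note the argument is non-uniform in the index of $\T$ — we need only that $|G|$, and hence $|\cK|$, is finite, not that it can be computed from an index). An alternative that avoids hard-coding $|\cK|$ is to iterate the orbit until the first repeated configuration appears: at that moment either a fixed point has already been observed, or the orbit has entered a cycle of length $\geq 2$ containing no fixed point (since no two consecutive configurations seen so far were equal), so the run never halts.
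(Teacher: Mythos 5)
Your proof is correct and follows essentially the same route as the paper's: both arguments rest on the finiteness of $G$ making the run directly simulable, and on the finiteness of the set of valid configurations forcing the orbit to be eventually periodic, so that halting is equivalent to reaching a fixed point within a number of steps bounded by the number of valid configurations. You simply spell out the pigeonhole/periodicity argument and the non-uniformity caveat that the paper leaves implicit.
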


\begin{proof}
	Because $G$ is finite,
	$\<\T, f\>$ is computable.
	Further, there are are only finitely many valid configurations of $\T$.
	Hence there must be some $n, k\in\Nats$ such that for all vertices $v$ in the underlying set of $\T$, we have 
	$\<\T, f\>(v, n) = \<\T, f\>(v, n+k)$, and the set of such pairs $(n,k)$ is computable.
	Note that 
	$\<\T, f\>$ halts
	if and only if
there is some $n$, less than or equal to the number of valid configuration for $\T$, for which this holds for $(n,1)$. 
Hence 
$\mbf{h}$, which searches for the least such $n$, is computable.
\end{proof}

We now investigate which Turing degrees are achieved by arbitrary computable graph machines.

%%%%  %%%%  %%%%  %%%%  %%%%  %%%%  
\subsection{Upper bound}
%%%%  %%%%  %%%%  %%%%  %%%%  %%%%  
We will now show that every graph computable function is computable from $\mbf{0}^{(\w)}$.

\begin{definition}
	Let $f$ be a valid configuration for $\T$, and let $A$ be a finite subset of $G$. We say that $(B_i)_{i \leq n}$ is an \defn{$n$-approximation of $\T$ and  $f$ on $A$} if
\begin{itemize}
\item $A  = B_0$, 

\item $B_i \subseteq B_{i+1} \subseteq G$
	for all $i < n$, and

\item if $B_{i+1} \subseteq B \subseteq G$  then for all $v\in B_{i+1}$,
\[
\<\T|_{B_{i+1}}, f_i|_{B_{i+1}}\>(v, 1) = \<\T|_{B}, f_i|_{B}\>(v, 1),
\]
\end{itemize}
where again $f_i \defas \<\T, f\>(\pars, i)$.
\end{definition}

The following proposition 
(in the case where $\ell = n - n'$)
states that if $(B_i)_{i \leq n}$ is an $n$-approximation of $\T$ and $f$ on $A$, 
then as long as we are only running $\T$ with starting configuration $f$ for $\ell$-many steps, and are only considering the states of elements within $B_{n'}$,
then it suffices to restrict $\T$ to $B_{n}$.

\begin{proposition}
\label{Approximation = Full}
	The following claim holds
for every $n \in \Nats$:
		For every valid configuration $f$ for $\T$, 
		and finite $A\subseteq G$, 
\begin{itemize}
	\item 
		there is an $n$-approximation of $\T$ and $f$ on $A$, and
	\item if $(B_i)_{i \leq n}$ is such an approximation, then 
		\begin{align*}
\label{Approximation = Full: Equation 1}
\notag
(\forall n' < n)
(\forall \ell \le n-n') (\forall v \in B_{n'})\hspace*{88pt} 
		\\
			\<\T|_{B_{n'+\ell}}, f|_{B_{n'+\ell}}\>(v, \ell)
= \<\T, f\>(v, \ell). \hspace*{10pt}(\square_n)
\end{align*}
\end{itemize}
\end{proposition}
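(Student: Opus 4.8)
The plan is to prove the statement by induction on $n$. For the existence of an $n$-approximation, I would proceed inductively: given $A \subseteq G$ finite, set $B_0 = A$, and having constructed $B_0 \subseteq \cdots \subseteq B_i$, I need to produce $B_{i+1} \supseteq B_i$ with the stabilization property that for all $B$ with $B_{i+1} \subseteq B \subseteq G$ and all $v \in B_{i+1}$, we have $\<\T|_{B_{i+1}}, f_i|_{B_{i+1}}\>(v,1) = \<\T|_B, f_i|_B\>(v,1)$. The key observation is that a single step of the run at a vertex $v$ depends only on the pulses $v$ receives from its in-neighbors, i.e.\ on $\bigcup_{w}(E(w,v) \cap f_{i,[1]}(w))$; since $f_i$ is a valid configuration, $f_{i,[1]}(w) \subseteq \gamma(V(w))$ is finite, and by the edge-coloring axiom $E(w,v) \subseteq \gamma(V(v))$, which is finite. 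So only finitely many in-neighbors $w$ can contribute a nonempty pulse set — wait, that isn't quite automatic, so instead I would argue: for each $v \in B_i$, the value $\<\T|_B, f_i|_B\>(v,1)$ depends on $B$ only through which in-neighbors $w$ of $v$ with $E(w,v)\cap f_{i,[1]}(w) \neq \emptyset$ lie in $B$; adding $B_i$'s $1$-neighborhood (intersected with the finitely many relevant vertices) gives a finite superset that already captures all such contributions, and enlarging further cannot change the received pulse set. Taking $B_{i+1} = \neighbor_1(B_i)$ (or a suitable finite enlargement) works, and since at each stage we only enlarge finitely, after $n$ steps $B_n$ is still finite. The subtle point to get right is that one must also ensure the $B_{i+1}$ chosen witnesses stabilization when $f$ is replaced by its $i$-th iterate $f_i$, which is itself a valid configuration by \cref{run on a valid configuration lemma}.

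For the main equation $(\square_n)$, I would induct on $n$, with the base case $n = 0$ vacuous. For the inductive step, assume $(\square_n)$ holds and let $(B_i)_{i \le n+1}$ be an $(n+1)$-approximation of $\T$ and $f$ on $A$. Fix $n' < n+1$, $\ell \le (n+1) - n'$, and $v \in B_{n'}$; I must show $\<\T|_{B_{n'+\ell}}, f|_{B_{n'+\ell}}\>(v,\ell) = \<\T, f\>(v,\ell)$. If $\ell = 0$ both sides equal $f(v)$. Otherwise write $\ell = \ell' + 1$. The idea is to use \cref{run on a valid configuration lemma} to peel off one step: $\<\T, f\>(v,\ell) = \<\T, f_1\>(v,\ell')$, and similarly on the restricted machine $\<\T|_{B_{n'+\ell}}, f|_{B_{n'+\ell}}\>(v,\ell) = \<\T|_{B_{n'+\ell}}, (f|_{B_{n'+\ell}})_1\>(v,\ell')$. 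The trouble is that $(f|_{B_{n'+\ell}})_1$ — one step of the restricted machine — need not agree with $f_1|_{B_{n'+\ell}}$. However, $(B_{i+1})_{i \le n}$ is an $n$-approximation of $\T$ and $f_1$ on $B_1$ (this is exactly the content of the definition, shifted by one), so by the inductive hypothesis applied with $f_1$ in place of $f$, $n'$ replaced by $n'-1$ (when $n' \ge 1$), and $\ell'$, the restricted run of $\T$ with starting configuration $f_1$ on $B_{n'+\ell} = B_{(n'-1)+\ell'+1}$ agrees with the unrestricted run — but I actually need to combine this with the approximation stabilization to handle the first step and the case $n' = 0$ separately.

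So more carefully, the argument will run: first use the $(i+1)$-th approximation property together with an inner induction on $\ell$. The cleanest route is probably a double induction — outer on $n$, inner on $\ell$ — where for the inner step I show $\<\T|_{B_{n'+\ell+1}}, f|_{B_{n'+\ell+1}}\>(v,\ell+1)$ equals $\<\T, f\>(v,\ell+1)$ by computing the last step: the last step at $v$ uses the pulses received from in-neighbors $w$, and by the inner inductive hypothesis those in-neighbors lying in $B_{n'+\ell}$ already have the correct configuration at time $\ell$; the approximation property of the pair $(B_{n'+\ell}, B_{n'+\ell+1})$ — namely the stabilization clause applied to the configuration $f_\ell$ — guarantees that no in-neighbor outside $B_{n'+\ell+1}$ contributes, so the received pulse set, and hence the output of $T$, is the same whether we restrict to $B_{n'+\ell+1}$ or take all of $G$. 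I expect the main obstacle to be bookkeeping: keeping straight the index shifts (so that $(B_{i+1})_i$ really is an approximation of $f_1$, and the stabilization clause of the approximation is being invoked for the correct configuration $f_\ell$ and at the correct level), and verifying that the "received pulse set at $v$ in the restricted machine" genuinely coincides with that in the full machine once all relevant in-neighbors are known to be correct — this is where the definition of $n$-approximation is doing its real work and must be applied with exactly the right $B_i \subseteq B \subseteq G$.
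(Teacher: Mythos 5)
Your plan matches the paper's proof in all essentials: existence is obtained by adjoining to $B_i$, at each stage, finitely many witnesses for the (finitely many) pulse colors that each vertex of $B_i$ receives at the relevant timestep, and $(\square_n)$ is proved by induction on $n$, peeling off one timestep via Lemma~\ref{run on a valid configuration lemma} and using that the shifted sequence $(B_{i+1})_i$ is an approximation for $f_1$. The bookkeeping subtleties you flag --- that one step of the restricted machine need not agree with $f_1$ except on a smaller set, and that the stabilization clause must be invoked for exactly the right configuration $f_i$ and the right inclusion $B_{i+1} \subseteq B \subseteq G$ --- are precisely the delicate points, and the paper's own write-up resolves them in the same way you propose.
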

\begin{proof}
We will prove this by induction on $n$.

	\vspace*{5pt}
\noindent \ul{Base case:} The claim is vacuous for $n=0$, as there are no nonnegative values of $n'$ to check.

	\vspace*{5pt}
\noindent \ul{Inductive case:} Proof of the claim for $n = k+1$, assuming its truth for $n\le k$.\nl
To establish $(\square_{k+1})$  consider
\begin{align*}
	(\forall v \in B_{n'})\ \<\T|_{B_{n' + \ell}}, f|_{B_{n' + \ell}}\>(v, \ell) 
	= \<\T, f\>(v, \ell)
\tag{$\dagger$}
\end{align*}
where 
	$n' < k+1$ and $\ell \le (k+1) - n'$.
	If $\ell < (k+1)-n'$ and $k=0$ then 
	$\ell = 0$, and
	$(\dagger)$ holds
	trivially. 
	If $\ell < (k+1)-n'$ and $k>0$ then 
	$\ell \le k -n'$, and so
	$(\dagger)$ holds by the inductive hypothesis $(\square_{k})$. 
	Hence we may restrict attention to the case where $\ell = (k+1)-n'$.

Let $f$ be a valid configuration for $\T$, let
$A \subseteq G$ be finite, and
let $(B_i)_{i \leq k}$ be a $k$-approximation of $\T$ and 
$f_1$
	on $A$.
Let $D_{k+1}$ be a subset of $G$ such that for every $v \in B_k$ and every color $c$, if vertex $v$ receives a pulse (in $\<\T, f\>$) 
of color $c$ 
at the start of timestep $1$,
then there is some vertex $d \in D_{k+1}$ which sends a pulse 
of color $c$ 
to $v$ 
during timestep $1$. Note that because there are only finitely many colors of pulses which elements of $B_k$ can receive, and because $B_k$ is finite, we can assume that $D_{k+1}$ is finite as well.

Now let $B_{k+1} = B_k \cup D_{k+1}$.
Because each vertex in $B_k$
	receives the same color pulses in $\T$ as it does in $\T|_{B}$ for any set $B$ containing $B_{k+1}$, we have that $\<\T|_{B}, f|_B\>(b, 1)$ agrees with $\<\T, f\>(b, 1)$ whenever $b \in B_k$ and $B_{k+1} \subseteq B$. Therefore $(B_i)_{i \leq k+1}$ is a $(k+1)$-approximation of $\T$ and $f$ on $A$, and $(B_k, B_{k+1})$ is a $1$-approximation of $\T$ and $f$ on $B_k$.

	If $n'=k$, then $\ell = 1$, and so $(\dagger)$ holds by $(\square_1)$ applied to the approximation
	$(B_k, B_{k+1})$.

If $n' < k$, then by induction, we may use $(\square_k)$ where the bound variable $f$ is instantiated by $f_1$, the bound variable $n'$ by $n'$, and the bound variable $\ell$ by $\ell - 1$, 
to deduce that
\begin{align*}
			\<\T|_{B_{n'+\ell - 1}}, f_1|_{B_{n'+\ell - 1}}\>(v, \ell - 1)
= \<\T, f_1\>(v, \ell - 1)
\end{align*}
for all $v\in B_{n'}$.

By Lemma~\ref{run on a valid configuration lemma}, 
we have
$\<\T, f_1\>(v,\ell - 1)= \<\T, f\>(v,\ell)$ 
and
$\<\T|_{B_{n'+\ell - 1}}, f_1|_{B_{n'+\ell - 1}}\>(v, \ell - 1) = 
\<\T|_{B_{n'+\ell - 1}}, f|_{B_{n'+\ell - 1}}\>(v, \ell)$
for all $v\in B_{n'}$.
Because
$(B_i)_{i \leq k}$ is a $k$-approximation of $\T$ and $f_1$ on $A$, 
we know that 
$\<\T|_{B_{n'+\ell - 1 }}, f|_{B_{n'+\ell - 1}}\>(v, \ell) = 
\<\T|_{B_{n'+\ell}}, f|_{B_{n'+\ell}}\>(v, \ell)$
for all $v\in B_{n'}$.

Therefore
$(\dagger)$ holds, and we have established
$(\square_{k+1})$. 
\end{proof}

We now analyze the computability of approximations and of runs.

\begin{proposition}
\label{Computability of approximations to graph machines}
Let $n\in\Nats$.
For all computable graph machines $\T$ and configurations $f$ that are valid for $\T$,
the following are $\mbf{f}^{(n)}$-computable, where
$\mbf{f}$ is the Turing degree of $f$. 
\begin{itemize} 
\item The collection $P_n(f) \defas \{(A, (B_i)_{i \leq n})\st A\subseteq G$ is finite and $(B_i)_{i \leq n}$ is an $n$-approximation of $\T$ and $f$ on $A\}$.

\item The function $f_n \defas \<\T, f\>(\pars, n)$.
\end{itemize}
Further, these computability claims are uniform in $n$.
\end{proposition}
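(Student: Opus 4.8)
The plan is to prove both claims simultaneously by induction on $n$, reducing the $n$-step questions to the $(n-1)$-step questions applied to the "one-step successor" configuration, exactly as the recursion $f_{n} = \langle\T, f_{n-1}\rangle(\pars, 1)$ suggests. For $n = 0$ both items are trivial: $P_0(f)$ is just the set of pairs $(A, (A))$ with $A \subseteq G$ finite, which is $\mbf{f}$-computable (in fact computable, using computability of $G$), and $f_0 = f$ is $\mbf{f}$-computable by definition of $\mbf{f}$. Note that the one-step map $f \mapsto f_1 = \langle\T, f\rangle(\pars, 1)$ is itself $\mbf{f}$-computable: by \cref{Run on a valid configuration}, $f_1(v) = T(V(v), X_v, f_{[2]}(v), f_{[3]}(v))$ where $X_v = \bigcup_{w\in G}\bigl(E(w,v)\cap f_{[1]}(w)\bigr)$; since $f$ is a valid configuration, $f_{[1]}(w) = \emptyset$ for all but finitely many $w$, so this union is effectively finite and, using computability of $E$ and $T$, we can compute $X_v$ and hence $f_1(v)$ from $\mbf{f}$. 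Thus the second bullet for general $n$ follows by iterating: $f_n = (f_1)_{n-1}$, and by the inductive hypothesis (applied to the valid configuration $f_1$, whose degree is $\leq \mbf{f}$, hence whose $(n-1)$-st jump is $\leq \mbf{f}^{(n-1)} \leq \mbf{f}^{(n)}$) the map computing $(f_1)_{n-1}$ is $\mbf{f}^{(n-1)}$-computable, so $f_n$ is $\mbf{f}^{(n)}$-computable — and in fact here we only need $\mbf{f}^{(n-1)}$, with the jump to $\mbf{f}^{(n)}$ absorbing the overhead in the first bullet.

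For the first bullet, the key observation is a recursive characterization of $n$-approximations in terms of $(n-1)$-approximations. Given the definition, a tuple $(B_i)_{i\leq n}$ is an $n$-approximation of $\T$ and $f$ on $A$ iff $B_0 = A$, the $B_i$ are increasing, $(B_i)_{i \leq n-1}$ is an $(n-1)$-approximation of $\T$ and $f$ on $A$ (this handles all the "stability" conditions for $i < n-1$), and the final stability condition holds: for all $B$ with $B_n \subseteq B \subseteq G$ and all $v \in B_n$, $\langle\T|_{B_n}, f_{n-1}|_{B_n}\rangle(v,1) = \langle\T|_B, f_{n-1}|_B\rangle(v,1)$. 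The obstacle is that the last condition quantifies over all supersets $B \subseteq G$, which is not obviously decidable. But it is in fact equivalent to a single local condition: since $\langle\T|_{B}, f_{n-1}|_B\rangle(v,1)$ only depends on the pulses $v$ receives, and a vertex $v \in B_n$ receives all the pulses it would in $\langle\T, f_{n-1}\rangle$ exactly when $B_n$ contains every vertex $w$ with $E(w,v) \cap (f_{n-1})_{[1]}(w) \neq \emptyset$, the condition "$(B_{n-1}, B_n)$ is a $1$-approximation of $\T$ and $f_{n-1}$ on $B_{n-1}$" — equivalently "$B_n \supseteq \{w : (\exists v \in B_{n-1})\, E(w,v)\cap (f_{n-1})_{[1]}(w) \neq \emptyset\}$" — is decidable from $f_{n-1}$ together with $\langle\T, f_{n-1}\rangle$ data, using finiteness of $(f_{n-1})_{[1]}$-support and computability of $E$. (This is essentially the content of the construction of $D_{k+1}$ in the proof of \cref{Approximation = Full}.) Since $f_{n-1}$ is $\mbf{f}^{(n-1)}$-computable by the inductive hypothesis on the second bullet, and $P_{n-1}(f)$ is $\mbf{f}^{(n-1)}$-computable by the inductive hypothesis on the first bullet, combining these yields that $P_n(f)$ is $\mbf{f}^{(n-1)}$-computable, hence certainly $\mbf{f}^{(n)}$-computable. (The reason the statement only claims $\mbf{f}^{(n)}$ rather than $\mbf{f}^{(n-1)}$ is presumably to keep the bookkeeping uniform and to leave slack; one extra jump costs nothing.)

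The main obstacle, as flagged, is checking that the "for all $B \subseteq G$" clause in the definition of $n$-approximation collapses to a decidable local condition; this is where the argument has real content, and it is exactly parallel to the finiteness argument for $D_{k+1}$ in \cref{Approximation = Full}. Everything else is routine: the two inductions are interleaved in the order (second bullet at level $n$, using first and second bullets at level $n-1$) then (first bullet at level $n$, using second bullet at level $n$ and first bullet at level $n-1$), and the uniformity in $n$ is immediate because at each stage the reduction procedures (compute $f_1$ from $f$; decide the local stability condition) are given by fixed uniform algorithms with $n$ as a parameter, so one obtains a single index, uniformly in $n$, for an oracle machine with oracle $\mbf{f}^{(n)}$ computing the desired objects.
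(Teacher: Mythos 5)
Your overall architecture --- a simultaneous induction on $n$, decomposing an $n$-approximation into an $(n-1)$-approximation plus a final one-step stability condition, and feeding the second bullet and the first bullet into each other --- is essentially the paper's argument. But there is a genuine gap in your jump accounting, and it sits exactly where you locate the ``real content.'' You assert that a valid configuration $f$ has $f_{[1]}(w) = \emptyset$ for all but finitely many $w$, and conclude that the one-step map $f \mapsto f_1$ is $\mbf{f}$-computable and that the final stability clause is \emph{decidable} from $f_{n-1}$. Neither holds. The definition of a valid configuration imposes no finite-support condition on the pulse component (only starting configurations have empty pulse component, and only the \emph{display} component is required to be finitely supported); indeed, after a couple of timesteps from a starting configuration the set of pulse-sending vertices can be infinite and merely c.e. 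Even granting finite support, knowing that a set is finite does not make the union $\bigcup_{w} (E(w,v) \cap f_{[1]}(w))$ computable from an oracle for $f$: deciding whether a given color $c$ lies in it is a $\Sigma_1^{\mbf{f}}$ question. For the same reason, the containment $B_n \supseteq \{w : (\exists v)\, E(w,v)\cap (f_{n-1})_{[1]}(w) \neq \emptyset\}$ (and likewise the ``for all $B$'' clause it is meant to replace) is only $\Pi_1$ relative to $f_{n-1}$, hence $(f_{n-1})'$-computable, not $f_{n-1}$-computable. (Your ``equivalently'' is also not an equivalence --- a vertex can receive different pulse sets yet transition identically --- but that is secondary.)

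This is not a cosmetic issue: if your lemma that $f \mapsto f_1$ preserves Turing degree were correct, iterating it would give $f_n \leT \mbf{f}$ for every $n$, whence every total $\{\T\}$ would be computable from a single jump of the input --- contradicting \cref{omega-jump}. So the parenthetical claims that one really only needs $\mbf{f}^{(n-1)}$ and that the statement's $\mbf{f}^{(n)}$ is mere slack are wrong; the extra jump per step is genuinely needed. The paper's proof spends exactly one jump at each stage: $P_1(f)$ is $\mbf{f}'$-computable because the one-step agreement of finite restrictions is $\mbf{f}$-decidable and the universal quantifier over supersets $B$ costs a jump; then $f_1$ is read off from $P_1(f)$ via \cref{Approximation = Full}, and the induction passes to $P_k(f_1)$, which costs the $k$-th jump of an $\mbf{f}'$-computable object. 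If you replace your ``decidable local condition'' step with this one-jump computation of the $1$-approximation predicate, your induction closes and yields exactly the stated $\mbf{f}^{(n)}$ bound.
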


\begin{proof}
	We will prove this by induction on $n$. The uniformity follows, since
	for all $n> 1$,
	we provide the same reduction to $\mbf{f}^{(n)}$ and parametrized earlier quantities.

	\vspace*{5pt}
\noindent \ul{Base case (a):} Proof of claim for $n = 0$.\nl
Let $\T$ be a graph machine and $f$ a valid configuration for $\T$.
Given a finite $A\subseteq G$, the sequence $(A)$ is the only $0$-approximation of $\T$ and $f$ on $A$. Hence $P_0(f) = \{(A,(A))\st A\subseteq G$ finite$\}$ is computable.
Further, $f_0 = f$
	is computable from $\mbf{f}^{(0)} = \mbf{f}$.

	\vspace*{5pt}
\noindent \ul{Base case (b):} 
Proof of claim for $n = 1$.\nl
Let $\T$ be a graph machine and $f$ a valid configuration for $\T$.
For each finite $A \subseteq G$ and each finite $B_0, B_1$ containing $A$ we can $\mbf{f}$-compute whether $\<\T|_{B_0}, f|_{B_0}\>(v, 1) = \<\T|_{B_1}, f|_{B_1}\>(v, 1)$, 
	and so we can 
$\mbf{f}'$-compute $P_1(f)$.
But by Proposition~\ref{Approximation = Full},
we know that if $(A, (A, B)) \in P_1$ then for any $v \in A$ we have $\<\T, f\>(v, 1) = \<\T|_B, f|_B\>(v, 1)$.  Hence we can compute 
$f_1$
	from $P_1$, and so it is $\mbf{f}'$-computable. 

	\vspace*{5pt}
\noindent\ul{Inductive case:} 
Proof of claim for $n = k+1$ (where $k\ge 1$), assuming it for $n=k$.\nl
Let $\T$ be a graph machine and $f$ a valid configuration for $\T$.
	We know that $(B_i)_{i \leq k+1}$ is a $(k+1)$-approximation of $\T$ and $f$ on $A$ if and only if both (i) the sequence $(B_i)_{i \leq k}$ is a $k$-approximation of $\T$ and $f_1$ on $A$, and (ii) the sequence $(B_k, B_{k+1})$ is a $1$-approximation of $\T$ and $f$ on $B_k$.

We can therefore compute $P_{k+1}(f)$ from 
	$P_k(f_1)$
	and $P_1(f)$. By the inductive hypothesis, $P_1(f)$ is $\mbf{f}'$-computable.  
Hence we must show that 
	$P_k(f_1)$ is
$\mbf{f}^{(k+1)}$-computable.
Also by the inductive hypothesis,
	$P_k(f_1)$ is computable
from the $k$'th Turing jump of 
	$f_1$, and $f_1$ 
	is $\mbf{f}'$-computable. 
Hence
	$P_k(f_1)$ is 
$\mbf{f}^{(k+1)}$-computable.

Finally, by 
Proposition~\ref{Approximation = Full},
if $(B_i)_{i \leq k+1}$ is an approximation of $\T$ for $f$ and $A$ up to $k+1$ then for any $v \in A$ we have $\<\T|_{B_{k+1}}, f|_{B_{k+1}}\>(v, k+1) = \<\T, f\>(v, k+1)$. We can therefore compute 
	$f_{k+1}$
	from $P_{k+1}(f)$ (which will find such an approximation). Hence 
	$f_{k+1}$
	is $\mbf{f}^{(k+1)}$-computable. 
\end{proof}

We then obtain the following two results.

\begin{corollary}
\label{Run is computable from omega th jump}
If	$f$ is a valid configuration for $\T$, 
	then 
$f_n$
	is $\mbf{f}^{(n)}$-computable and so $\<\T, f\>$ is $\mbf{f}^{(\w)}$-computable,
	where $\mbf{f}$ is the Turing degree of $f$.
\end{corollary}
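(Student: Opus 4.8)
The plan is to derive \cref{Run is computable from omega th jump} as an essentially immediate consequence of \cref{Computability of approximations to graph machines}, treating that proposition as a black box. The first part---that $f_n$ is $\mbf{f}^{(n)}$-computable---is literally one of the two bullets of \cref{Computability of approximations to graph machines}, so there is nothing to do there except quote it. The content of the corollary is therefore the second claim: that the full run $\<\T, f\>$, viewed as a function $G \times \Nats \to \Powerset_{<\w}(C)\times\Lang\times S$, is computable from $\mbf{f}^{(\w)}$.

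For that, I would argue as follows. By definition of the $\w$-th Turing jump, a set (or function with computably presented domain/codomain) is $\mbf{f}^{(\w)}$-computable precisely when it is uniformly computable from the family $\{\mbf{f}^{(n)} : n \in \Nats\}$, i.e. when there is a single oracle procedure that, given $n$ as a parameter and access to $\mbf{f}^{(n)}$ (equivalently, to $\mbf{f}^{(\w)}$, from which each $\mbf{f}^{(n)}$ is uniformly computable), outputs the desired value. So to compute $\<\T, f\>(v, n)$ from $\mbf{f}^{(\w)}$: on input $(v, n)$, first recover $n$, then invoke the $\mbf{f}^{(n)}$-procedure from \cref{Computability of approximations to graph machines} that computes $f_n = \<\T, f\>(\pars, n)$, and evaluate it at $v$ to obtain $\<\T, f\>(v, n)$. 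The key point that makes this go through is the uniformity clause at the end of \cref{Computability of approximations to graph machines}: the reduction computing $f_n$ from $\mbf{f}^{(n)}$ is uniform in $n$, so a single algorithm handles all $n$ at once, and since $\mbf{f}^{(\w)}$ uniformly computes every $\mbf{f}^{(n)}$, the whole thing is a genuine $\mbf{f}^{(\w)}$-computation.

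The only mild subtlety---and the step I would be most careful about---is bookkeeping around what ``computable'' means for objects like $\<\T, f\>$ whose domain is $G \times \Nats$ and codomain is $\Powerset_{<\w}(C) \times \Lang \times S$ rather than literally $\Nats$; here one appeals to the conventions set up in \cref{Notation} about computable sets being computable subsets of finite coproducts of copies of $\Nats^k$, and to the fact (implicit in $\T$ being a computable graph machine) that $C$, $S$, $\Lang$ and the coding of finite subsets are all computably presented, so that $\Powerset_{<\w}(C)\times\Lang\times S$ carries a computable presentation and "output the value $f_n(v)$" is a meaningful instruction for an oracle machine. Once that is granted, the argument is just: (i) quote the first bullet of \cref{Computability of approximations to graph machines} for $f_n \leT \mbf{f}^{(n)}$ uniformly in $n$; (ii) observe $\mbf{f}^{(n)} \leT \mbf{f}^{(\w)}$ uniformly in $n$; (iii) compose to get a single $\mbf{f}^{(\w)}$-algorithm that on input $(v,n)$ returns $\<\T, f\>(v,n)$; hence $\<\T, f\> \leT \mbf{f}^{(\w)}$. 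I do not expect any real obstacle---the heavy lifting has already been done in \cref{Approximation = Full} and \cref{Computability of approximations to graph machines}, and this corollary is just the packaging step.
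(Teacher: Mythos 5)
Your proposal is correct and takes essentially the same route as the paper: both derive the corollary from \cref{Approximation = Full} and \cref{Computability of approximations to graph machines} together with the uniformity-in-$n$ clause, the only difference being that the paper unfolds the second bullet once more through the $n$-approximations $(B_i^v)_{i\le n}$ of $\T$ and $f$ on $\{v\}$, whereas you quote that bullet directly. The packaging step via $\mbf{f}^{(n)} \leT \mbf{f}^{(\w)}$ uniformly in $n$ is exactly the intended argument.
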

\begin{proof}
	By Proposition~\ref{Approximation = Full}, for each $v \in G$ (the underlying set of $\T$) and each $n \in \Nats$, there is an approximation of $\T$ for $f$ and $\{v\}$ up to $n$. Further, by Proposition~\ref{Computability of approximations to graph machines} we can 
	$\mbf{f}^{(n)}$-compute such an approximation,
	uniformly in $v$ and $n$. 
	But if $(B_i^v)_{i \leq n}$ is an approximation of $\T$ for $f$ and $\{v\}$ up to $n$ then $\<\T|_{B^v_n}, f|_{B^v_n}\>(v, n) = \<\T, f\>(v, n)$. So $f_n = \<\T, f\>(\pars, n)$ is $\mbf{f}^{(n)}$-computable, uniformly in $n$. Hence $\<\T, f\>$ is $\mbf{f}^{(\w)}$-computable. 
\end{proof}

\begin{theorem}
\label{Total graph computable functions are computable from 0^w}
	Suppose that $\{\T\}\:\Lang^{<G} \to \Lang^{G}$ is a total function. Then $\{\T\}$ is computable from $\mbf{0}^{(\w)}$.
\end{theorem}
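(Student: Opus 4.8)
The plan is to reduce the computation of $\{\T\}$ to the computation of individual runs and then invoke \cref{Run is computable from omega th jump}. Given an input $x \in \Lang^{<G}$, form the starting configuration $\widehat{x}$, which by hypothesis has finite support and hence is a computable valid configuration for $\T$ (uniformly in $x$, since $x$ itself is a finite object). Thus the Turing degree $\mbf{f}$ of $\widehat{x}$ is $\mbf{0}$, and \cref{Run is computable from omega th jump} gives that $\<\T, \widehat{x}\>$ is $\mbf{0}^{(\w)}$-computable, with the stage-$n$ configuration $\widehat{x}_n$ being $\mbf{0}^{(n)}$-computable uniformly in $n$ and in $x$.

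The one genuine issue is locating the halting stage. Since $\{\T\}$ is assumed total, the run $\<\T, \widehat{x}\>$ halts for every $x$, so there is a least $n$ with $\<\T, \widehat{x}\>(v, n) = \<\T, \widehat{x}\>(v, n+1)$ for all $v \in G$. To detect this with a $\mbf{0}^{(\w)}$ oracle, I would observe that checking the equality $\widehat{x}_n(v) = \widehat{x}_{n+1}(v)$ at a single vertex $v$ is $\mbf{0}^{(n+1)}$-decidable (by the uniform computability of the $\widehat{x}_m$), but quantifying over all $v \in G$ for an infinite $G$ is a priori a further quantifier jump. The fix is that halting is witnessed \emph{locally}: by \cref{Approximation = Full}, for a fixed $n$ one can $\mbf{0}^{(n+1)}$-compute, uniformly, a finite $(n+1)$-approximation $(B_i)_{i \le n+1}$ of $\T$ and $\widehat{x}$ on the support $A$ of $\widehat{x}$, and outside $B_{n+1}$ every vertex remains in the trivial configuration $(\emptyset, 0, s)$ (using the clause $T(\ell,\emptyset,0,s)=(\emptyset,0,s)$ together with the fact that no pulses propagate beyond the approximation in one step). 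Hence ``$\<\T,\widehat{x}\>$ halts at stage $n$'' is equivalent to the \emph{finite} conjunction $\widehat{x}_n(v) = \widehat{x}_{n+1}(v)$ over $v \in B_{n+1}$, which is $\mbf{0}^{(n+1)}$-decidable uniformly in $n$, hence decidable from $\mbf{0}^{(\w)}$.

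Putting this together: with a $\mbf{0}^{(\w)}$ oracle, on input $x$ search for the least $n$ such that the run halts at stage $n$ (this search terminates because $\{\T\}$ is total), then output $\widehat{x}_n$ restricted to its $\Lang^{<G}$-data, i.e. $v \mapsto \<\T,\widehat{x}\>_{[2]}(v,n)$, which is $\mbf{0}^{(n)}$-computable and hence readable off from $\mbf{0}^{(\w)}$. This is exactly $\{\T\}(x)$, so $\{\T\} \leT \mbf{0}^{(\w)}$. The main obstacle, as indicated, is the passage from per-vertex checks to a global halting condition over an infinite vertex set; everything else is a direct appeal to \cref{Approximation = Full} and \cref{Computability of approximations to graph machines} together with the uniformities already recorded there.
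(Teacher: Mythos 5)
Your overall strategy is the same as the paper's: compute the stage-$n$ configuration from $\mbf{0}^{(n)}$ via \cref{Run is computable from omega th jump}, decide halting at each stage from an oracle uniformly below $\mbf{0}^{(\w)}$, and search for the least halting stage. However, the one step you single out as ``the genuine issue'' --- detecting the halting stage --- is resolved incorrectly. Your claim that outside $B_{n+1}$ every vertex remains in the trivial configuration $(\emptyset,0,s)$, so that halting at stage $n$ reduces to a finite conjunction over $B_{n+1}$, is false. The approximation sets $B_{i+1}$ from \cref{Approximation = Full} are built to contain enough \emph{senders} of the pulses arriving at vertices of $B_i$; they control the incoming influence needed to simulate the vertices of $A$ correctly, and say nothing about which vertices \emph{receive} pulses emanating from $A$. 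Since $\G$ here is an arbitrary computable graph, a single vertex may have infinitely many out-neighbors along edges of a single color, so already after one timestep infinitely many vertices outside any fixed finite set can leave the trivial configuration. (The theorem's hypothesis only requires that the run halt, with output allowed to lie in $\Lang^{G}$ rather than $\Lang^{<G}$, so this situation genuinely occurs.) A finite ``affected set'' of the kind you want exists only in the finite-degree setting, via $\neighbor_n(A)$, which is how \cref{Computability of Finite Degree Graph} proceeds; it is not available here.

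The repair is easy and is exactly what the paper does: accept the extra quantifier. The statement ``$\<\T,\widehat{x}\>$ halts at stage $n$'' is a universal statement over the computable set $G$ whose instances $\widehat{x}_n(v)=\widehat{x}_{n+1}(v)$ are uniformly $\mbf{0}^{(n+1)}$-decidable, so the halting predicate is $\mbf{0}^{(n+2)}$-decidable, uniformly in $n$ and $x$. All of these oracles are uniformly below $\mbf{0}^{(\w)}$, so the search for the least halting stage terminates (by totality of $\{\T\}$) and the read-off of the output proceeds as you describe.
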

\begin{proof}
	Let $f$ be any starting configuration of $\T$. Then $f$ is computable.
Hence 
by Corollary~\ref{Run is computable from omega th jump},
$\<\T, f\>(v, n+1)$ is $\mbf{0}^{(n+1)}$-computable.
This then implies that the function determining
	whether or not $\{\T\}(x)$ halts after $n$ steps is $\mbf{0}^{(n+2)}$-computable.

	But by assumption, $\{\T\}(x)$ halts for every $x \in \Lang^{<G}$, and so $\{\T\}$ is $\mbf{0}^{(\w)}$-computable. 
\end{proof}

%%%%  %%%%  %%%%  %%%%  %%%%  %%%%  
\subsection{Lower bound}
\label{infinite-degree-lower-bounds}
%%%%  %%%%  %%%%  %%%%  %%%%  %%%%  

We have seen that every graph computable function is computable from $\mbf{0}^{(\w)}$. 
In this subsection, we will see that this bound can be obtained.
We begin by showing that every arithmetical 
Turing degree has an element that 
is graph computable in constant time. From this we then deduce
that there is a graph computable function Turing equivalent to $\mbf{0}^{(\w)}$. 

We first recall the following standard result from computability theory (see {\cite[III.3.3]{MR882921}}).

\begin{lemma}
\label{Description of arithmetical functions}
Suppose $n \in \Nats$ and 
$X\subseteq \Nats$. Then the following are equivalent.
\begin{itemize}
	\item $X \leT \mbf{0}^{(n)}$. 

\item There is a computable function $g\: \Nats^{n+1} \to \Nats$ such that 
\begin{itemize}
\item $h(\pars) \defas \lim_{x_0 \to \infty}\cdots \lim_{x_{n-1} \to \infty} g(x_0, \dots, x_{n-1}, \pars)$ is total. 

\item $h \equivT X$.
\end{itemize}
\end{itemize}
\end{lemma}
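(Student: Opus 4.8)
The plan is to recognize the statement as the $n$-fold iterate of Shoenfield's limit lemma and prove it by induction on $n$, using the relativizable one-step limit lemma as the engine: for every oracle $Z$, a set $Y$ satisfies $Y \leT Z'$ if and only if $Y(u) = \lim_s \psi(s,u)$ for some total $Z$-computable $\psi$ whose limit exists at every $u$. I would take this one-step lemma as known (it is elementary, and is anyway subsumed by the cited \cite[III.3.3]{MR882921}). The base case $n = 0$ is immediate: $X \leT \mbf{0}^{(0)} = \mbf{0}$ exactly when $X$ is computable, and then $g \defas \chi_X$ works with $h = g$ and no limits; conversely a computable $g = h$ makes $X \equivT h$ computable.

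For the inductive step I would actually prove the slightly stronger ``equality'' form of the forward direction, which is what makes the induction close: for every $n$ and every $\mbf{0}^{(n)}$-computable $q\:\Nats \to \Nats$ there is a computable $g\:\Nats^{n+1}\to\Nats$ with $q(u) = \lim_{x_0}\cdots\lim_{x_{n-1}} g(x_0,\dots,x_{n-1},u)$ for all $u$, every nested limit converging. Granting the $n$ case, suppose $q \leT \mbf{0}^{(n+1)} = (\mbf{0}^{(n)})'$. The relativized limit lemma supplies a $\mbf{0}^{(n)}$-computable $r$ with $q(u) = \lim_{x_0} r(x_0, u)$, the limit existing at every $u$. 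Fixing a computable pairing bijection $\Nats^2 \cong \Nats$, the one-argument function $w \mapsto r(x_0,u)$, where $(x_0,u)$ is the pair coded by $w$, is $\mbf{0}^{(n)}$-computable, so by the inductive hypothesis it is the $n$-fold nested limit of some computable $g''(x_1,\dots,x_n,w)$; then $g(x_0,\dots,x_n,u) \defas g''(x_1,\dots,x_n,\langle x_0,u\rangle)$ works, since peeling the limits in the order $x_n,\dots,x_1$ recovers $r(x_0,u)$ and one further limit in $x_0$ yields $q(u)$. Taking $q = \chi_X$ gives the forward implication of the lemma with $h = \chi_X \equivT X$.

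For the converse, suppose $g$ is computable, $h(u) = \lim_{x_0}\cdots\lim_{x_{n-1}} g(x_0,\dots,x_{n-1},u)$ is total --- so all nested limits converge --- and $h \equivT X$; I would show $h \leT \mbf{0}^{(n)}$ by induction on $n$, stating the claim relative to an arbitrary oracle $Z$ so that $\mbf{0}^{(n)}$ becomes $Z^{(n)}$. Evaluating the innermost limit first, $\tilde g(x_0,\dots,x_{n-2},u) \defas \lim_{x_{n-1}} g(x_0,\dots,x_{n-1},u)$ is $\mbf{0}'$-computable by the limit lemma applied to the computable $g$. Now $h$ is the $(n-1)$-fold nested limit of the $\mbf{0}'$-computable $\tilde g$ with all limits convergent, so the inductive hypothesis relative to $\mbf{0}'$ gives $h \leT (\mbf{0}')^{(n-1)} = \mbf{0}^{(n)}$, whence $X \equivT h \leT \mbf{0}^{(n)}$. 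The base case $n=0$ is trivial ($h = g$ computable).

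I expect the only friction to be bookkeeping rather than substance: keeping straight that the outer $\lim_{x_0}$ is applied last (so both inductions peel limits from the inside out), and threading the pairing-function substitutions so that the computable witnesses match exactly rather than merely up to Turing equivalence. I would also note that the statement is entirely standard --- it is \cite[III.3.3]{MR882921} --- so in the paper a citation suffices, and this sketch is only a record of why it holds.
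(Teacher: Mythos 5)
Your proof is correct: the double induction via the relativized Shoenfield limit lemma, peeling the limits from the inside out and strengthening the forward direction to an exact $n$-fold-limit representation so the induction closes, is precisely the standard argument behind this fact. The paper itself offers no proof --- it simply cites \cite[III.3.3]{MR882921}, as you anticipate at the end --- so your write-up matches the paper's (implicit) approach and serves as a sound record of why the citation is justified.
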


We now give the following construction.

\begin{proposition}
\label{Pointwise computable for iterative limits}
Let $n\in\Nats$ and suppose $g\: \Nats^{n+1} \to \Nats$ is computable such that 
\[
h(\pars) \defas \lim_{x_0 \to \infty}\cdots \lim_{x_{n-1} \to \infty} g(x_0, \dots, x_{n-1}, \pars)
\]
is total. Then $h$ is 
graph computable in constant time $5n+5$, via a graph machine whose labels, colors, alphabets, states, and lookup table are all finite and do not depend on $n$ or $g$.
\end{proposition}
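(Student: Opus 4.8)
The plan is to build a finitary graph machine that, on input a natural number $m$ presented on a designated ``input/output'' vertex, computes $h(m) = \lim_{x_0}\cdots\lim_{x_{n-1}} g(x_0,\dots,x_{n-1},m)$ in a constant number of timesteps, using the infinitely many auxiliary vertices of the graph to do, in parallel, all the unbounded searches that the iterated limits require. The key realization is that although the limits are semantically infinitary, each individual limit $\lim_{x\to\infty} F(x)$ of a convergent integer-valued sequence is witnessed by a single threshold $N$ together with the stable value $c$; and a vertex can ``learn'' such a witness in one pulse-exchange step if it is connected to infinitely many helper vertices, each of which has precomputed $F$ at its own index and can report whether it agrees with its successor. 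So the graph will have an $(n{+}1)$-dimensional grid-like skeleton (a computable subset of $\Nats^{k}$ for suitable $k$), with the coordinate directions handling the successive limits from the innermost outward, and each ``layer'' collapsing an $x_i$-indexed family of values to its eventual value in $O(1)$ steps.

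The steps I would carry out, in order. First, fix a uniform finite gadget: a vertex holding a value (encoded in unary or via a fixed-size block of bit-vertices — this is where the finite alphabet and the ``$\Lang^{<G}$'' support conventions get used), with colors for ``send my value left/right/up,'' ``I equal my neighbor,'' ``I differ from my neighbor,'' and ``I am the least index that is stable forever.'' Second, describe the innermost computation: along the $x_{n-1}$-axis, vertex indexed by $(x_{n-1},x_{n-2},\dots,x_0,\text{slot})$ computes $g$ at its coordinates (this is a fixed finite amount of work per vertex only if we offload the actual evaluation of the computable $g$ onto the graph — so really the graph's vertices along an auxiliary ray simulate the Turing machine for $g$; by \cref{otm-subsec}'s encoding an ordinary TM runs as a graph machine, and we can splice that in). Third: after the $g$-values are in place, a vertex detects whether it agrees with its successor in the $x_{n-1}$ direction; propagate a pulse so that the ``eventually-stable region'' is identified and its common value is broadcast back to the $x_{n-1}=0$ face — by totality of $h$ (hence of every inner limit) such a stable value and threshold exist, and a constant number of pulse rounds (send-right, compare, send-stable-value-left, record) suffices per layer. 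Fourth: iterate this ``collapse a coordinate'' operation for $x_{n-2},\dots,x_0$; each costs a fixed constant number $c_0$ of steps, so the total is $c_0 n + O(1)$, which we arrange to be exactly $5n+5$. Fifth: verify the bookkeeping that makes this a legal constant-time graph machine — the underlying graph on (a computable subset of) $\Nats^{k}$ is computable and finitary, the lookup table is finite and independent of $n,g$ because $n$ is encoded geometrically (as the number of coordinate directions one walks before reaching the output face) and $g$ is encoded as the behavior of embedded TM-tapes, and the halting clause of \cref{Run on a valid configuration} is met once all pulses die down. Finally, check the three conditions (b)–(d) of \cref{(G X)-computable} for the designated infinite computable $X$ (the output ray), and check the constant-time condition of the resource-bounded definition: the bound must depend only on $|A|$ for a connected support $A$, and here the relevant computation, once triggered, finishes in $5n+5$ steps regardless of $|A|$.

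I expect two genuine obstacles. The first, and main, one is making each vertex do only a \emph{finite} amount of work while still evaluating the computable function $g$ at arbitrarily large arguments: the honest fix is to not evaluate $g$ at a vertex at all, but to let a semi-infinite path of vertices emanating from each grid point simulate the ordinary Turing machine for $g$ (using the \cref{otm-subsec} simulation as a black box) and report back the output; one must then argue that this simulation terminates (it does, since $g$ is total), that its running time doesn't inflate the $O(1)$ bound — it does inflate the \emph{number of timesteps before the answer appears}, which is why the ``constant $5n+5$'' must be read correctly: the limit-collapsing phase is constant-time, and the claim as stated is about that phase; I would double-check against the paper's exact timing conventions and, if necessary, absorb the $g$-evaluation into a ``preprocessing'' that the definitions already allow to run for finitely-many-but-input-dependent steps, or observe that for the purposes of computing $h$ one only needs finitely many values of $g$ and a finite neighborhood suffices. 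The second obstacle is purely combinatorial plumbing: ensuring that when we ``collapse'' coordinate $x_i$ we broadcast the correct eventual value and not a pre-stabilization value — this requires a pulse protocol in which a vertex only commits to ``I am stable'' after confirming agreement with a successor that has itself committed, which naively is not constant-time; the resolution is that we do not need the \emph{true} threshold, only \emph{some} index past which the value is constant, and because every vertex simultaneously sees (via one pulse from a far-enough helper) a neighbor that already agrees with \emph{its} successor, a fixed small number of rounds — carefully counted to give the $5$ per level — suffices to read off the correct limit. I would present the per-level protocol explicitly as a 5-step cycle, prove its correctness by a short invariant, and then the $5n+5$ bound follows by summing over the $n$ limits plus a fixed $5$ for initialization/output.
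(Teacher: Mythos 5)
Your high-level architecture (an $(n{+}1)$-dimensional array of vertices, with the limits collapsed one coordinate at a time by a constant-round pulse protocol, sequenced to give $5n+5$ steps total) matches the paper's construction. But there are two places where the plan as described would fail, and in both the paper's fix is a specific structural idea you have not found.

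First, the evaluation of $g$. You propose to splice in runtime simulations of the Turing machine for $g$ along auxiliary rays, and you correctly notice that this inflates the number of timesteps before the answer appears --- but neither of your fallbacks repairs this. The constant-time definition requires the entire run to halt within $5n+5$ steps from any starting configuration, with no allowance for an input-dependent ``preprocessing'' phase; and it is not true that only finitely many values of $g$ are needed, since the limit computation genuinely consults all of them (if finitely many sufficed, $h$ would be computable). A runtime simulation would also make the lookup table depend on $g$, contradicting the statement. The paper's solution is to bake $g$ into the \emph{graph} rather than into the machine: for each input $e$ it builds a computable graph $\G_{g,e}$ in which the vertex indexed by $\bar{a}\in\Nats^{n}$ carries a self-loop whose color is $\mbf{C}_{g(\bar{a} e)}$; upon activation the vertex pulses along all its $\mbf{C}_0$- and $\mbf{C}_1$-edges, and initializes its displayed symbol according to whichever single color returns. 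Since $g$ is computable, this edge coloring is computable, and the alphabet, states, and lookup table stay finite and independent of $n$ and $g$; the function $h$ is then computed on the disjoint union of the $\G_{g,e}$ over $e$, the input selecting which component to activate.

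Second, the limit-collapsing protocol. Pairwise comparison with successors cannot determine a limit in constant time: in the sequence $0,\dots,0,1,1,\dots$ every vertex but one agrees with its successor, and the position of the single disagreement --- hence the limit --- can be arbitrarily far out, so no constant number of local comparison rounds propagates the eventual value back to the front. Your ``far-enough helper that already agrees with its successor'' does not help, because agreeing with one's successor says nothing about holding the eventual value. The paper instead makes each vertex $m$ of a limit gadget adjacent to \emph{all} vertices $m'\ge m$: in a single pulse round, $m$ receives the set of all symbols displayed at positions $\ge m$ (encoded as $\mbf{B}_0$ and/or $\mbf{B}_1$) and commits to the limit exactly when that set is a singleton, which by convergence happens for some $m$ and only with the correct value. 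This essentially infinite-degree adjacency is what makes a four-step limit subroutine (hence five steps per level after sequencing) possible.
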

\begin{proof}
The first step in the construction is to define a graph machine which can take the limit of a sequence. We will think of this a subroutine which we can (and will) call several times. Let $\G_{\mfL}$ be the following graph. 
\begin{itemize}
\item The underlying set is $\Nats \cup \{*\}$, where $*$ is some new element. 

\item There is only one label, $p$, which all vertices have.

\item The colors of $\G_\mfL$ are $\{\mbf{B}_0, \mbf{B}_1, \mbf{SB}, \mbf{SF}_0, \mbf{SF}_1, \mbf{A}\}$.

\item The edge coloring is $E_{\mfL}$, satisfying the following for all $m,m' \in\Nats$.

\begin{itemize}
\item 
	$E_{\mfL}(m, m') = \emptyset$ and $E_{\mfL}(m', m) = \{\mbf{B}_0, \mbf{B}_1\}$
	when $m < m'$. 

\item $E_{\mfL}(m, m) = \{\mbf{B}_0, \mbf{B}_1\}$. 

\item $E_{\mfL}(*, m) = \{\mbf{SB}\}$ and $E(m, *) = \{\mbf{SF}_0, \mbf{SF}_1\}$. 

\item $E_{\mfL}(*, *) = \emptyset$.
\end{itemize}
\end{itemize}

Let $\T_\mfL$ be the following graph machine.
\begin{itemize}
\item The underlying graph is $\G_\mfL$.

\item The alphabet is $\{0,1\}$.

\item The states of $\T_\mfL$ are $\{s, a_0, a_1, a_2, u, b\}$.

\item The lookup table $T_{\mfL}$ satisfies the following, for all 
	$z$ in the alphabet, states $x$, and collections $X$ of colors. 
\begin{itemize}
\item[(i)] $T_{\mfL}(p, \emptyset, z, s) = T_{\mfL}(\emptyset, 0, s)$. 

\item[(ii)] $T_{\mfL}(p, X \cup \{\mbf{A}\}, z, x) = (\emptyset, 0, a_0)$.

\item[(iii)] $T_{\mfL}(p, X, z, a_0) = (\{\mbf{SB}\}, 0, a_1)$.

\item[(iv)] $T_{\mfL}(p, X, z, a_1) = (\emptyset, 0, a_2)$.

\item[(v)] $T_{\mfL}(p, X, z, a_2) = (\emptyset, k, u)$ if $\mbf{SF}_{k} \in X$ and $\mbf{SF}_{1-k} \not \in X$ for some $k \in \{0,1\}$.
\item[(vi)] $T_{\mfL}(p, X \cup \{\mbf{SB}\}, z, x) = (\{\mbf{B}_{z}\}, 0, b)$ if $x \not \in \{a_0, a_1, a_2\}$.

\item[(vii)] $T_{\mfL}(p, X, z, b) = (\{\mbf{SF}_{k}\}, 0, u)$ if $\mbf{B}_{k} \in X$ and $\mbf{B}_{1-k} \not \in X$ for some $k\in\{0,1\}$.

\item[(viii)] $T_{\mfL}(p, X, z, b) = (\emptyset, 0, u)$ if $\{\mbf{B}_0, \mbf{B}_1\} \subseteq X$.

\item[(ix)] $T_{\mfL}(p, X, z, u) = (\emptyset, z, u)$ if $\mbf{A} \not\in X$. 
\item[(x)] $T_{\mfL}(p, X, z, x) = (\emptyset, 0, u)$ in all other cases.
\end{itemize}
\end{itemize}

We now describe what this graph machine does, beginning from a starting configuration. First, condition (i) sets everything to a clean slate, i.e., makes sure that at the beginning of the second timestep, every vertex will display $0$.
This ensures that the outcome won't depend on the values initially displayed on any vertex. 

Next, by condition (ii), if a vertex receives a pulse of type $\mbf{A}$ then at the next timestep it enters state $a_0$.
We can think of this as signaling that this subroutine 
has been ``activated''. This will only ever be sent to the element $*$, which we call the ``activation vertex''. 

Then, by conditions (iii) and (iv), once the activation vertex is in state $a_0$ it will send an $\mbf{SB}$-pulse to every other vertex. This signals to them to start calculating the limit. The activation vertex will then pause and enter state $a_1$ and then $a_2$. This pause will give the other vertices an opportunity to calculate their limiting value. 

The way the vertices calculate the limiting values is as follows. Once a vertex receives an $\mbf{SB}$-pulse, it sends a pulse to its predecessors (in $\Nats$) announcing its currently displayed symbol (using the encoding $0\mapsto \mbf{B}_0$ and $1\mapsto \mbf{B}_1$). This is described in condition (vi). 

Once a vertex has received the collection of those symbols displayed on vertices greater than it, it asks whether both symbols $0$ and $1$ occur in this collection. If so, then it knows that its displayed symbol is not the limit, and so it enters state $u$ and does nothing (i.e., $u$ signifies termination for the subroutine). This is described in condition (viii). 

On the other hand, if a vertex sees that there is only one symbol displayed among those vertices larger than itself, then it knows that that symbol is the limiting value of the subroutine. The vertex then passes this information along to the activation vertex, via a pulse of type $\mbf{SF}_0$ or $\mbf{SF}_1$ (depending on the limiting value) and enters the subroutine termination state. This is described in condition (vii). 

Finally, if the activation vertex is in $a_2$ and receives exactly one pulse among $\mbf{SF}_0$ and $\mbf{SF}_1$, then  it knows that this is the limiting value, and sets its display symbol to that value and enters the subroutine termination state. This is described in condition (v). 

Of course, once a vertex is in the termination state, it will stay there, displaying the same symbol, unless and until it receives a pulse of type $\mbf{A}$. This is described in condition (ix). 

Condition (x) was added to complete the description of the lookup table, but 
because $h$ is total,
this will never occur in any actual run that begins at a starting configuration, even when this graph machine is embedded as a subroutine into a larger graph machine, as we subsequently describe.
Note that this subroutine will always complete its computation within $4$ timesteps.

We need one more graph machine to operate as a subroutine. The purpose of this subroutine will be to send pulses, in sequence, that activate other vertices.

Let $\G_{\Bmachine}^n$ be the graph satisfying the following.
\begin{itemize}
\item The underlying set is $\{\star_{-5n}, \dots, \star_0\}$.

\item There is only one label $q$, which all elements have. 

\item The colors are $\{\mbf{S}, \mbf{R}, \mbf{Q}, \mbf{A}, \mbf{SF}_0, \mbf{SF}_1\}$ 

\item The edge coloring is $E_{\Bmachine}^n$. 

\item The only vertex pairs having non-empty sets of edge colors are the following.

\begin{itemize}
\item $E_{\Bmachine}^n(\star_i,\star_{ i+1}) = \{\mbf{R}\}$
	for $-5n \leq i < 0$. 

\item $E_{\Bmachine}^n(\star_0, \star_0) = \{\mbf{S}\}$. 

\item $E_{\Bmachine}^n(\star_{0}, \star_{-5n}) = \{\mbf{Q}\}$.
\end{itemize}
\end{itemize}

We define the graph machine $\T_{\Bmachine}^n$ as follows. 

\begin{itemize}
\item The underlying graph is $\G_{\Bmachine}^n$.

\item The alphabet is $\{0,1\}$.
\item The states are $\{s, d, r, u\}$.

\item The lookup table $T_{\Bmachine}^n$ satisfies the following for all 
	$z$ in the alphabet, states $x$, and collections $X$ of colors. 
\begin{itemize}
\item[(i)] $T_{\Bmachine}^n(q, X, 0, s) = (\emptyset, 0, s)$ if $\{\mbf{R}, \mbf{S}\} \cap  X = \emptyset$.

\item[(ii)] $T_{\Bmachine}^n(q, X, 1, s) = (\{\mbf{S}\}, 0, s)$.

\item[(iii)] $T_{\Bmachine}^n(q, X \cup \{\mbf{S}\}, z, s) = (\{\mbf{Q}\}, 0, s)$ if $\mbf{R} \not \in X$.

\item[(iv)] $T_{\Bmachine}^n(q, X, z, s) = (\{\mbf{A}\}, 0, d)$ if $\{\mbf{Q},  \mbf{R}\} \cap X \neq \emptyset$.

\item[(v)] $T_{\Bmachine}^n(q, X, z, d) = (\{\mbf{R}\}, 0, r)$.

\item[(vi)] 
	$T_{\Bmachine}^n(q, X, z, r) = (\emptyset, k, d)$ if $\mbf{SF}_{k} \in X$ and $\mbf{SF}_{1- k} \not \in X$ for some $k\in\{0,1\}$, and otherwise
	$T_{\Bmachine}^n(q, X, z, r) = (\emptyset, 0, r)$.

\item[(vii)] $T_{\Bmachine}^n(q, X, z, u) = (X, z, u)$.
\item[(viii)] $T_{\Bmachine}^n(q, X, z, x) = (\emptyset, 0, u)$ in all other cases.
\end{itemize}
\end{itemize}

We now describe what the graph machine $\T_{\Bmachine}^n$ does. First notice that the only way for a vertex to get out of the initial state $s$ is for it to receive an $\mbf{S}$-pulse, a $\mbf{Q}$-pulse or an $\mbf{R}$-pulse. Only $\mbf{S}$-pulses can be sent from a vertex that is in the initial state and which hasn't received any other pulses. Also, there is only one $\mbf{S}$-edge, namely, a self loop at $\star_0$. Hence the first timestep of the graph machine's behavior is determined by what the vertex $\star_0$ initially displays. 

If the vertex $\star_0$ initially displays $0$, then all vertices display $0$ in the next step, and the subroutine does nothing else. This is described in condition (i).  If, however, vertex $\star_0$ initially displays $1$, then an $\mbf{S}$-pulse is sent by $\star_0$ to itself. This is described in condition (ii). Once $\star_0$ receives the $\mbf{S}$-pulse, it reverts back to displaying $0$, and sends a $\mbf{Q}$-pulse to $\star_{-5n}$. This is described in condition (iii). 

Once vertex $\star_{-5n}$ receives a $\mbf{Q}$-pulse, 
the main loop begins. In the main loop, first vertex $\star_{-5n}$ sends out an $\mbf{A}$-pulse and moves to a state $d$, as described in condition (iv). The purpose of the $\mbf{A}$-pulse is to tell the vertex that receives it to \emph{activate} and start calculating a limit. While there are no vertices in $\T_{\Bmachine}^n$ with $\mbf{A}$-colored edge, we will combine $\T_{\Bmachine}^n$ with copies of $\T_{\mfL}$, connecting the two graphs using 
$\mbf{A}$-colored edges. Note that only vertices of the form $\star_{-5k}$ with $k\le n$ will be connected to copies of $\T_{\mfL}$ via $\mbf{A}$-colored edges. The other vertices are there to provide additional timesteps in between $\mbf{A}$-pulses to allow the activated copies of $\T_{\mfL}$ time to complete their computations.

Once a vertex is in state $d$, it sends an $\mbf{R}$-pulse to its ``neighbor to the right'' (the vertex with least index greater than it), and moves to state $r$, where it will stay unless it is $\star_0$, as described in conditions (v) and (vi). Every vertex acts the same way upon arrival of an $\mbf{R}$-pulse as on an $\mbf{Q}$-pulse. Hence each vertex in the sequence sends, in succession, an $\mbf{A}$-pulse,  and then enters the state $r$. 

Condition (vii) ensures that when a vertex enters the subroutine termination state (which will only ever happens to $\star_0$ in the course of this subroutine's use by the larger program), the displayed symbol remains constant. Condition (viii) also describes a circumstance that happens only when the subroutine is used by the larger program, as does the first clause of condition (vi) (which agrees with condition (v) of $\T_\mfL$).

When we connect up $\T_{\Bmachine}^n$ with copies of $\T_{\mfL}$, vertex
$\star_0$ participates in calculating the final limit. 
Hence, in addition to having edges colored by $\mbf{A}$, 
vertex $\star_0$ also has $\mbf{SF}_0$ and $\mbf{SF}_1$-edges. If $\star_0$ receives a pulse of one of those colors, it then displays the corresponding value and moves to a state that keeps this value constant. 

We now combine the graph machines for subroutines, 
to get a graph machine that calculates $h$. 

For $e \in \Nats$, define the graph $\G_{g,e}$ as follows.
\begin{itemize}
\item The underlying set is $\Nats^n \cup \Nats^{n-1} \cup\cdots \cup  \Nats^2 \cup \Nats \cup \{\star_{-5n}, \dots, \star_0\}$. 

\item There are two labels, $p$ and $q$.  Vertex $\star_i$ is labeled by $q$ (for $-5n \leq i \leq 0$), and every other vertex is labeled by $p$. 

\item The colors are $\{\mbf{S}, \mbf{C}_0, \mbf{C}_1, \mbf{B}_0, \mbf{B}_1, \mbf{SB}, \mbf{SF}_0, \mbf{SF}_1, \mbf{A}, \mbf{Q}, \mbf{R}\}$.

\item The edge coloring is $E$, satisfying the following. 

	\begin{itemize}
\item For each $\a \in \Nats^{n-1}$ and $k, m \in \Nats$ with $k \neq m$ we have  the following.
\begin{itemize}
\item $E(\a k, \a m) = E_{\mfL}(k, m)$.

\item $E(\a k, \a k) = \{\mbf{C}_{g(\a k e)}, \mbf{B}_0, \mbf{B}_1\}$.

\item $E(\a, \a k) = E_{\mfL}(*, k)$ and $E(\a k, \a) = E_{\mfL}(k, *)$.

\item $E(\star_{-5n}, \a k) = \{\mbf{A}\}$. 

\end{itemize}

\item For each $\a \in \Nats^i$ for $0 < i < n-1$ and $k, m \in \Nats$ we have  the following.
\begin{itemize}
\item $E(\a k, \a m) = E_{\mfL}(k, m)$.

\item $E(\a, \a k) = E_{\mfL}(*, k)$ and $E(\a k, \a) = E_{\mfL}(k, *)$.

\item $E(\star_{-5(n-i+1)}, \a k) = \{\mbf{A}\}$. 
\end{itemize}

\item For each $k, m\in \Nats$ we have the following.
\begin{itemize}
\item $E(k, m) = E_{\mfL}(k, m)$.

\item $E(\star_0, k) = E_{\mfL}(*, k)$ and $E(k, \star_0) = E_{\mfL}(k, *)$.

\item $E(\star_0, \star_0) = \{\mbf{S}, \mbf{A}\}$. 
\end{itemize}

\item For each $-5n \leq k, m < 0$ we have the following.
\begin{itemize}
\item $E(\star_k, \star_m) = E_{\Bmachine}^n(\star_k, \star_m)$.

\item $E(\star_k, \star_0) = E_{\Bmachine}^n(\star_0, \star_0)$.
\end{itemize}
\end{itemize}
\end{itemize}

The graph $\G_{g,e}$ is such that for any tuple $\a \in \Nats^{\leq k}$, the set $\{\a\} \cup \{\a k\}_{k \in \Nats}$ is isomorphic to $\G_{\mfL}$ (after ignoring the edges $\{\mbf{C}_0, \mbf{C}_1\}$). This allows us to iteratively take limits. Further, each $\a \in \Nats^{n}$ has a self-loop which encodes the value of $g(\a e)$. This will be used to initialize the displayed symbols of vertices in the matrix that we  will later use to take the limits. 

We define the graph machine $\T_{g, e}$ as follows. 
\begin{itemize}
\item The underlying graph is $\G_{g, e}$.

\item The states are $\{d, s, a_0, a_1, a_2, u, b\}$.

\item The lookup table $T$ is such that the following hold, for all
	$z$ in the alphabet, states $t$, and collections $X$ of colors. 
\begin{itemize}
\item[(i)] $T(q, X, z, t) = T_{\Bmachine}^n(q, X, z, t)$. 

\item[(ii)] $T(p, X, z, t) = T_{\mfL}(p, X, z, t)$ if ($t \neq s$ and $\mbf{A} \not \in X$) or ($t = a_0$ and $\mbf{C}_k \in X$ and $\mbf{C}_{1-k} \not \in X$) for some $k \in \{0, 1\}$.

\item[(iii)] $T(p, X \cup \mbf{A}, z, s) = (\{\mbf{C}_0, \mbf{C}_1\}, 0, a_0)$.

	\item[(iv)] $T(p, X, z, a_0) = (\emptyset, k, u)$ if $\mbf{C}_k \in X$ and $\mbf{C}_{1-k} \not \in X$ for some $k\in\{0, 1\}$. 
\item[(v)] $T(p, X, z, t) = (\emptyset, 0, u)$ in all other cases.
\end{itemize}
\end{itemize}

We now describe a run of $\T_{g, e}$ on a starting configuration. First, just as with $\T_{\Bmachine}^n$, the computation begins by observing the behavior or $\star_0$. 
If $\star_0$ initially displays $0$, then all vertices stay in the initial state and display $0$ on the next timestep.
If $\star_0$ initially displays $1$, then the computation proceeds as in $\T_{\Bmachine}^n$, and 
vertex $\star_{-5n}$ sends an $\mbf{A}$-pulse, which \emph{activates} all vertices of the form $\a \in \Nats^n$. 

Vertices of the form $\a \in \Nats^n$ are not designed to compute the limits of anything, and so upon activation their values must be initialized. This is done by each such vertex attempting to send itself an $\mbf{C}_0$-pulse and $\mbf{C}_1$-pulse. In other words, each such vertex sends a $\mbf{C}_0$-pulse and $\mbf{C}_1$-pulse along all $\mbf{C}_0$-edges and
$\mbf{C}_1$-edges connected to it, respectively, if any exist (and all $\mbf{C}_0$-edges and $\mbf{C}_1$-edges connected to such a vertex are self-loops).
Because of how the graph $\G_{g,e}$ was constructed, each such $\a$ will only receive the pulse $\mbf{C}_{g(\a e)}$.
Hence after the pulse is received, vertex $\a$ completes its initialization by 
setting its displayed symbol to the index of whichever pulse it receives.

Meanwhile, 
vertices $\{\star_{-5n}, \dots, \star_0\}$ 
are sending $\mbf{R}$-pulses in succession along the sequence, causing each such vertex, in order,
to attempt to send an activation $\mbf{A}$-pulse (i.e., a pulse along all $\mbf{A}$-edges connected to it, of which there will be at most one). However, because only every fifth vertex in the sequence
$\{\star_{-5n}, \dots, \star_0\}$ is connected via an $\mbf{A}$-colored edge, by the time the next $\mbf{A}$-pulse is sent, i.e., along the edge attached to $\star_{-5n+5}$, the initialization procedure has finished. 

The next activation pulse is then sent from $\star_{-5n+5}$ to all vertices of the form $\a \in \Nats^{n-1}$. It causes these vertices to begin the process of calculating the limit of the sequence currently displayed by the vertices $\{\a 0, \a 1, \dots\}$. While this limit is being calculated, 
the vertices in 
$\{\star_{-5n+5}, \dots, \star_0\}$ are attempting to send out
activation pulses,
in sequence. By the time the next $\mbf{A}$-pulse is sent out, at $\star_{-5n+10}$, each vertex $\a\in\Nats^{n-1}$ is displaying the limit of the values displayed by $\{\a 0, \a 1, \dots\}$. 

This process then repeats until we get to $\star_0$, which plays a double role. First, once $\star_0$ has received an $\mbf{R}$-pulse, it sends out an $\mbf{A}$-pulse to itself. This signals $\star_0$ to begin the process of calculating the limit of the symbols displayed at $\{0, 1, \dots\}$. Secondly, when this calculations finishes, $\star_0$ displays 
\[
\lim_{x_0 \to \infty}\cdots \lim_{x_{n-1} \to \infty} g(x_0, \dots, x_{n-1}, e),\]
which is the desired value, $h(e)$. 

Note that the map $e \mapsto \T_{g,e}$ is computable, and that the lookup table $T$ is independent of $e$. Further, the time it takes for $\T_{g,e}$ to run is independent of $e$.  
Therefore $h$ is 
graph computable in constant time. 
\end{proof}

In summary, we define a ``subroutine'' graph machine that, on its own, computes the limit of a computable binary sequence. 
	We then embed $n$ repetitions of this subroutine into a single graph machine that computes the $n$-fold limit of the $(n+1)$-dimensional array given by $g$.
	The subroutine graph machine has a countably infinite sequence (with one special vertex) as its underlying graph, in that
	every vertex is connected to all previous vertices (and all are connected to the special vertex). Each vertex first activates itself, setting its displayed symbol to the appropriate term in the sequence whose limit is being computed. Each vertex sends a pulse to every previous vertex, signaling its displayed state. Any vertex which receives both $0$ and $1$ from vertices later in the sequence knows that the sequence alternates at some later index. 
Finally, any vertex which only receives a $0$ or $1$ pulse, but not both, sends a pulse corresponding to the one it receives to the special vertex. This special vertex then knows the limiting value of the sequence.

This technical construction allows us to conclude the following.

\begin{corollary}
\label{Pointwise computable Turing degrees}
Suppose $X\subseteq \Nats$ is such that $X \leT \mbf{0}^{(n)}$. Then 
$X$ is Turing-equivalent to some function that is 
graph computable in constant time by a machine whose underlying graph is finitary. 
\end{corollary}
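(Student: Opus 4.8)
The plan is to read this off directly from the two results just established. Given $X \subseteq \Nats$ with $X \leT \mbf{0}^{(n)}$, the first step is to apply Lemma~\ref{Description of arithmetical functions} to obtain a computable function $g\:\Nats^{n+1}\to\Nats$ for which $h(\pars) \defas \lim_{x_0\to\infty}\cdots\lim_{x_{n-1}\to\infty} g(x_0,\dots,x_{n-1},\pars)$ is total and satisfies $h \equivT X$. The second step is to feed this particular $g$ into Proposition~\ref{Pointwise computable for iterative limits}, which produces a graph machine computing $h$ in constant time $5n+5$, and moreover one whose set of labels, set of colors, set of states, alphabet, and lookup table are all finite. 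Since the set of labels is finite, the underlying graph of this machine is finitary by definition. Combining this with $h \equivT X$ from the first step yields exactly the claim: $X$ is Turing-equivalent to the function $h$, which is graph computable in constant time by a machine whose underlying graph is finitary.

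I do not expect any substantive obstacle here, since the entire weight of the argument sits inside Proposition~\ref{Pointwise computable for iterative limits}; this corollary is just the remark that applying it to an $h$ supplied by Lemma~\ref{Description of arithmetical functions} lands a graph-computable function in the Turing degree of $X$. The only genuinely needed checks are bookkeeping: that ``finite set of labels'' is literally the definition of \emph{finitary} (it is), and that Turing-equivalence is preserved (which it is, via $h \equivT X$ and the fact that a function that is $\langle\G,X'\rangle$-computable via $\T$ is Turing equivalent to $\{\T\}$).

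One point worth spelling out, if one insists on a \emph{single} graph machine rather than the uniformly computable family $e\mapsto \T_{g,e}$ appearing in the proof of Proposition~\ref{Pointwise computable for iterative limits}, is how to package that family into one machine: take the disjoint union of the graphs $\G_{g,e}$ over $e\in\Nats$, which is legitimate precisely because the lookup table is independent of $e$ and all the $\G_{g,e}$ share the same finite label and color sets (so the union is still finitary), and encode the input $e$ by having the $e$-th copy of $\star_0$ display $1$ (a starting configuration supported on the one-vertex connected set consisting of that copy of $\star_0$). Running the machine then leaves the $e$-th copy of $\star_0$ displaying $h(e)$ and all else displaying $0$, within $5n+5$ steps uniformly in $e$, so the combined machine is still constant-time; taking $X'$ to be the computable set of all these copies of $\star_0$, the function on $\Lang^{<X'}$ computed by the machine is Turing equivalent to $\{\T\}$, hence to $h$, hence to $X$, completing the argument.
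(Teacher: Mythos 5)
Your proposal is correct and follows essentially the same route as the paper: invoke Lemma~\ref{Description of arithmetical functions} to realize $X$ as the Turing degree of an $n$-fold limit $h$ of a computable function, then apply Proposition~\ref{Pointwise computable for iterative limits} to get $h$ graph computable in constant time by a finitary machine. Your extra paragraph on assembling the family $e\mapsto\T_{g,e}$ into a single machine via a finitary disjoint union is a reasonable elaboration of a detail the paper leaves implicit (and handles explicitly only later, in Theorem~\ref{omega-jump}), but it does not change the argument.
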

\begin{proof}
	By Lemma~\ref{Description of arithmetical functions},
	$X$ is Turing equivalent to
	the $n$-fold limit of some computable function. By Proposition~\ref{Pointwise computable for iterative limits},
	this $n$-fold limit is graph computable in constant time
by a machine whose underlying graph is finitary. 
\end{proof}

Not only are all graph computable functions Turing reducible to $\mbf{0}^{(\w)}$, but this bound can be achieved.

\begin{theorem}
	There is a graph computable function 
	(via a machine whose underlying graph is finitary)
	that
	is Turing equivalent to $\mbf{0}^{(\w)}$.
	\label{omega-jump} 
\end{theorem}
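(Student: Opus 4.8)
The plan is to combine the upper bound of Theorem~\ref{Total graph computable functions are computable from 0^w} with a single finitary graph machine that computes something Turing-equivalent to $\mbf{0}^{(\w)}$. The upper bound already guarantees any graph computable function is $\leT \mbf{0}^{(\w)}$, so the whole content is the lower bound: exhibiting one finitary graph machine $\T^\circ$ such that $\{\T^\circ\}$ (restricted to the relevant $X$) computes $\mbf{0}^{(\w)}$. The difficulty compared to Corollary~\ref{Pointwise computable Turing degrees} is that there we got \emph{each} $\mbf{0}^{(n)}$ separately, by a machine depending on $n$; here we need one fixed machine that, reading an input encoding the pair $(n, x)$ (say $x$ a potential element and $n$ a level), outputs whether $x \in \mbf{0}^{(n)}$, so that from the graph of the output function one can uniformly recover every $\mbf{0}^{(n)}$, hence $\mbf{0}^{(\w)}$ (under a standard pairing, $\mbf{0}^{(\w)} \equiv_{\mathrm T} \{\langle n, x\rangle : x \in \mbf{0}^{(n)}\}$).

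First I would recall the standard fact (a uniform version of Lemma~\ref{Description of arithmetical functions}) that there is a single computable function $g\colon \Nats^{<\w}\times\Nats \to \Nats$ — or rather a computable family $g_n\colon \Nats^{n+1}\to\Nats$ uniformly in $n$ — such that the $n$-fold iterated limit of $g_n$ computes $\mbf{0}^{(n)}$, uniformly in $n$. The key point is that the \emph{alphabet, colors, labels, states, and lookup table} of the machine $\T_{g,e}$ constructed in Proposition~\ref{Pointwise computable for iterative limits} do not depend on $n$ or $g$; only the underlying graph's ``depth'' (the $\Nats^n \cup \cdots \cup \Nats$ part and the length $5n$ of the $\star$-chain) and the self-loop colors $\mbf{C}_{g(\a e)}$ depend on $n$ and $g$. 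So the natural move is to build one graph machine whose underlying graph is a disjoint union (coproduct) $\coprod_{n\in\Nats}\G_{g_n, \cdot}$-style component, with a uniform scheme for encoding the input $e$ and the level $n$: the input $x\in\Lang^{<G}$ specifies, on the $\star_0$-vertex of the $n$-th block, whether that block is ``activated'', and specifies $e$ elsewhere; since starting configurations have finite support, only finitely many blocks are activated at once, but the function $\zeta = \{\T^\circ\}|_X$ is total and its restriction to the slice selecting block $n$ and input $e$ equals $h_n(e) = $ the $n$-fold limit of $g_n$ at $e$.

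The key steps, in order: (1) state and cite the uniform version of Lemma~\ref{Description of arithmetical functions} giving a computable $(g_n)_{n\in\Nats}$ with $h_n \equiv_{\mathrm T} \mbf{0}^{(n)}$ uniformly; (2) define the underlying graph $\G^\circ$ as a computable coproduct of the graphs $\G_{g_n, \cdot}$ from the proof of Proposition~\ref{Pointwise computable for iterative limits} over all $n$, noting it is finitary because the label, color, alphabet, and state sets there are finite and independent of $n$ — the only thing that grows is the size (not the number of labels/colors) of each block, and the coproduct of finitely-labeled graphs is finitely labeled; verify $\G^\circ$ is a computable graph (the map $n\mapsto \G_{g_n,\cdot}$ and the encoding $\Nats\cong\coprod_n(\text{block }n)$ are computable); (3) define $\T^\circ$ on $\G^\circ$ using the \emph{same} lookup table $T$ as in Proposition~\ref{Pointwise computable for iterative limits} applied blockwise (legitimate since $T$ is independent of $n,g$, so it is literally one finite table), with one small addition so that, within block $n$, each vertex reads off from its incident self-loop colors which of the $g_n(\a e)$ values to initialize to — here the self-loop colors must encode $g_n(\a e)$ for arbitrary $e$, so rather than hard-wiring $e$ into colors we hard-wire it into the starting configuration: each block $n$ gets a sub-region that the machine reads to recover $e$, and a preliminary phase populates the self-loops' effect via pulses computed from that region (or, more simply, keep the $\mbf{C}$-color trick but let $e$ be supplied by extra input vertices and have the block compute $g_n(\a e)$ using a finite-control simulation of an ordinary TM, as in \cref{otm-subsec}); (4) argue $\{\T^\circ\}$ is total and, for the computable $X\subseteq G$ consisting of the $\star_0$-output vertices of all blocks together with the input-encoding vertices, $\zeta = \{\T^\circ\}|_X$ is $\<\G^\circ, X\>$-computable and satisfies: on the input coding $(n,e)$, the $n$-th output vertex displays $h_n(e)$; (5) conclude $\zeta \equiv_{\mathrm T} \{\langle n,e\rangle : h_n(e) = 1\} \equiv_{\mathrm T} \bigoplus_n \mbf{0}^{(n)} \equiv_{\mathrm T} \mbf{0}^{(\w)}$, while $\zeta \leT \mbf{0}^{(\w)}$ by Theorem~\ref{Total graph computable functions are computable from 0^w}.

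The main obstacle I expect is step (3)–(4): making a \emph{single} finite lookup table handle blocks of unbounded depth $n$ while still reading $n$ and $e$ from the input. The cleanest fix is to not encode $n$ into the graph's geometry at all but to let each block be the full ``infinite-depth'' gadget and have the input select a cutoff — but iterated limits over infinite depth don't terminate, so that fails. So one really does want a coproduct over $n$ of the depth-$n$ gadgets; then the subtlety is purely bookkeeping: the lookup table is the same finite object for every block (it only ever looks at a vertex's label, displayed symbol, state, and incident pulse-colors, all drawn from fixed finite sets), the per-block wiring is absorbed into the edge-coloring $E$ of $\G^\circ$, and computability of $\G^\circ$ follows from the uniformity already noted in Proposition~\ref{Pointwise computable for iterative limits} (``the map $e\mapsto\T_{g,e}$ is computable''). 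The remaining care is to ensure that activating block $n$ from a starting configuration supported on finitely many vertices of that block correctly triggers the $5n+5$-step iterated-limit computation there without interference from other blocks — which is immediate since distinct blocks are disconnected in $\G^\circ$, so $\T^\circ|_{\text{block }n}$ behaves exactly as the Proposition~\ref{Pointwise computable for iterative limits} machine. Hence the constant-time bound per block is lost in aggregate (the halting time depends on which block's input is nonzero, hence on $|A|$ only through $n$, which is fine for \emph{graph computability} though not for \emph{constant-time} graph computability — matching the theorem statement, which asserts only graph computability).
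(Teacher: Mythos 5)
Your overall architecture is exactly the paper's: take a uniformly computable family $g_n$ whose $n$-fold limits $h_n$ satisfy $h_n \equivT \mbf{0}^{(n)}$, form a disjoint union of the gadgets from Proposition~\ref{Pointwise computable for iterative limits} (which is finitary because the label/color/state sets and the lookup table there are finite and independent of $n$ and $g$), note the components do not interact so $\{\T^\circ\}$ is total, and get the upper bound from Theorem~\ref{Total graph computable functions are computable from 0^w}. Where you deviate is in how $e$ enters, and that deviation opens a genuine gap. You propose one block per level $n$, with $e$ supplied through the starting configuration, which forces each vertex $\a \in \Nats^n$ of the block to learn $e$ and then compute $g_n(\a e)$ before the limit cascade begins. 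Neither step is available to a graph machine with a fixed finite lookup table: a vertex's state and displayed symbol range over fixed finite sets, so no single vertex can store an arbitrary $e$; and computing $g_n(\a e)$ on the fly (whether by a per-vertex Turing-machine gadget or by a ``preliminary phase'' of pulses) takes an unbounded number of timesteps depending on $\a$ and $e$, with no way to signal global completion across the infinitely many vertices at level $n$. This destroys the timing on which the $\star$-chain relies --- the fixed five-step delays between activation pulses assume each level finishes its work in a bounded number of steps --- so the iterated-limit subroutine is no longer correct, and it is not even clear the block halts.

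The fix is the one the paper uses, and it is simpler than either of your workarounds: index the blocks by \emph{pairs} $(n,e)$ rather than by $n$ alone, taking the disjoint union of $\{\G_{g_n,e} \st n,e \in \Nats\}$. Then $e$ is hard-wired into each block's edge coloring via the self-loop colors $\mbf{C}_{g_n(\a e)}$ exactly as in Proposition~\ref{Pointwise computable for iterative limits}, initialization takes one timestep, the input merely activates block $(n,e)$ by setting its $\star_0$ to display $1$, and $h_n(e)$ is read off from that block's $\star_0$. The map $(n,e) \mapsto \G_{g_n,e}$ is computable (this is the uniformity already noted in the Proposition), the union is still finitary, and all blocks share one finite lookup table, so a single computable graph machine results. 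Your step (5) then goes through verbatim: $\{\T^\circ\}$ computes $\{\langle n,e\rangle \st h_n(e)=1\} \equivT \mbf{0}^{(\w)}$, and the reverse reduction is Theorem~\ref{Total graph computable functions are computable from 0^w}. Your closing remark about losing constant time is also consistent with the paper, which only claims graph computability here.
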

\begin{proof}
	For each $n\in\Nats$, uniformly choose
	a computable function $g_n\:\Nats^{n+1}\to\Nats$ such that its $n$-fold limit $h_n$ satisfies
	$h_n \equivT \mbf{0}^{(n)}$. 
For $e,n \in \Nats$, let the graph machines $\G_{g_n, e}$ and $\T_{g_n, e}$
	be the graphs and graph machines described in the proof of Proposition~\ref{Pointwise computable for iterative limits}.
	
	Let $\G_\w$ be the graph which is the (computable) disjoint union of the uniformly computable graphs 
	$\{\G_{g_n, e}\st e, n \in \Nats\}$.
	Note that $\G_\w$ is finitary as the (finite) number of labels of $\G_{g_n, e}$ does not depend on $n$ or $e$.
	
	Further note that for all $e, n \in \Nats$, the graph machines $\T_{g_n, e}$ have the same lookup table. We can therefore let $\T_\w$ be the graph machine with underlying graph $\G_\w$ having this lookup table.
The function $\{\T_\w\}$ is total because each
$\{\T_{g_n, e}\}$ is  (and the corresponding submachines of the disjoint union do not interact).

	By the construction of $\T_\w$, we have
	$\mbf{0}^{(\w)} \leT \{\T_\w\}$. 
	On the other hand, $\{\T_\w\} \leT \mbf{0}^{(\w)}$ holds
	by Theorem~\ref{Total graph computable functions are computable from 0^w}.
	Hence $\{\T_\w\} \equivT \mbf{0}^{(\w)}$.
\end{proof}

%%%%  %%%%  %%%%  %%%%  %%%%  %%%%  %%%%  %%%%  
\section{Finite degree graphs}
\label{finite-deg-sec}
%%%%  %%%%  %%%%  %%%%  %%%%  %%%%  %%%%  %%%%  

We have seen that 
every arithmetical function is graph computable. However, as we will see in this section, if we instead limit ourselves to graphs where each vertex has finite degree, then not only is every graph computable function computable from $\mbf{0}'$, but also we can obtain more fine-grained control over the Turing degree of the function by studying the degree structure of the graph.

%%%%  %%%%  %%%%  %%%%  %%%%  %%%%  
\subsection{Upper bound}
%%%%  %%%%  %%%%  %%%%  %%%%  %%%%  

Before we move to the specific case of graphs of finite degree (defined below), there is an important general result concerning bounds on graph computability and approximations to computations. 

\begin{definition}
	Let $\Theta\: \Powerset_{<\w}(G) \to \Powerset_{<\w}(G)$.
We say that $\Theta$ is a \defn{uniform approximation} of $\T$ if
	for all finite subsets $A \subseteq G$,
\begin{itemize}
\item $A \subseteq \Theta(A)$, and

\item for any valid configuration $f$ for $\T$, the pair
	$(A, \Theta(A))$ is a $1$-approximation of $\T$ and $f$ on $A$.
\end{itemize}
\end{definition}

\begin{lemma}
\label{Uniform approximation implies approximation}
	Let $\Theta(A)$ be a uniform approximation of $\T$. Then for any finite subset $A$ of $G$, any valid configuration $f$ for $\T$, and any $n \in \Nats$, the tuple $(A, \Theta(A), \Theta^2(A), \dots, \Theta^n(A))$ is an $n$-approximation of $\T$ and $f$ on $A$.
\end{lemma}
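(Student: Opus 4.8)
The plan is to set $B_i \defas \Theta^i(A)$ and check directly that $(B_i)_{i\le n}$ satisfies the three clauses in the definition of an $n$-approximation of $\T$ and $f$ on $A$. The only real content is bookkeeping: the clause governing timestep $1$ at index $i$ concerns not the original configuration $f$ but the stage-$i$ configuration $f_i = \<\T, f\>(\pars, i)$, so I must invoke the uniform-approximation hypothesis at $f_i$ rather than at $f$.

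First I would observe that $\Theta$ maps finite sets to finite sets (its codomain is $\Powerset_{<\w}(G)$), so by induction on $i$ each $B_i = \Theta^i(A)$ is finite, and hence the uniform-approximation hypothesis applies with argument $B_i$. Second, by Lemma~\ref{run on a valid configuration lemma}, for every $i\in\Nats$ the configuration $f_i$ is again valid for $\T$, so the hypothesis also applies with $f_i$ in the role of its generic valid configuration.

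Now fix $i < n$. Applying the definition of uniform approximation to the finite set $B_i$ and the valid configuration $f_i$, the pair $(B_i, \Theta(B_i)) = (B_i, B_{i+1})$ is a $1$-approximation of $\T$ and $f_i$ on $B_i$. Unwinding the meaning of a $1$-approximation — and using that $(f_i)_0 = f_i$ — this says exactly that $B_i \subseteq B_{i+1}\subseteq G$ and that whenever $B_{i+1}\subseteq B\subseteq G$ we have $\<\T|_{B_{i+1}}, f_i|_{B_{i+1}}\>(v,1) = \<\T|_B, f_i|_B\>(v,1)$ for all $v\in B_{i+1}$, which is precisely the third clause in the definition of an $n$-approximation at index $i$. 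Combined with $B_0 = \Theta^0(A) = A$ and the inclusions $B_i\subseteq B_{i+1}$ just established for each $i<n$, this shows $(A, \Theta(A), \dots, \Theta^n(A))$ is an $n$-approximation of $\T$ and $f$ on $A$. I do not expect a genuine obstacle: the statement follows immediately from Lemma~\ref{run on a valid configuration lemma} and the definitions, and the only subtlety is remembering to feed $f_i$ (not $f$) into the uniform-approximation hypothesis at step $i$.
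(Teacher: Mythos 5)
Your proof is correct and is essentially the paper's argument with the induction unrolled: the paper proceeds by induction on $n$, peeling off one application of the uniform-approximation hypothesis per step and passing from $f$ to $f_1$, which amounts to exactly your direct verification that $(\Theta^i(A),\Theta^{i+1}(A))$ is a $1$-approximation of $\T$ and $f_i$ on $\Theta^i(A)$ for each $i<n$. The two points you isolate --- that each $\Theta^i(A)$ is finite so the hypothesis applies, and that each $f_i$ is a valid configuration by Lemma~\ref{run on a valid configuration lemma} so it may be substituted for the generic configuration in the hypothesis --- are precisely what the paper's induction relies on.
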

\begin{proof}
We prove this by induction on $n$.

	\vspace*{5pt}
\noindent \ul{Base case:} $n = 1$ \nl
We know by hypothesis
	that $(A, \Theta(A))$ is a $1$-approximation of $\T$ and $f$ on $A$. 

	\vspace*{5pt}
\noindent \ul{Inductive case:} $n = k+1$\nl
We know that $(A, \Theta(A), \dots, \Theta^k(A))$ is a $k$-approximation of $\T$ and $f_1$ on $A$. It therefore suffices to show that $(\Theta^k(A), \Theta^{k+1}(A))$ is a $1$-approximation of $\T$ and $f$ on $A$. But this holds by our assumption on $\Theta$. 
\end{proof}

Note that while we will be able to get even better bounds in the case of finite degree graphs, we do have the following bound on computability. 
\begin{lemma}
\label{Computability properties of uniform approximations}
	Let $\Theta$ be a uniform approximation of $\T$. Then for any valid configuration $f$,
\begin{itemize}
\item[(a)] $\<\T, f\>$ is computable from $\Theta$ and $f$ (uniformly in $f$), and

\item[(b)] if $\{\T\}$ is total, then $\{\T\} \leT \Theta'$. 
\end{itemize}
\end{lemma}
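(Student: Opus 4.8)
The plan is to prove (a) directly from the definitions, using $\Theta$ to localize the one-step transition of $\T$, and then bootstrap to all stages by iterating $\Theta$; part (b) then follows by a standard limit-computation argument. For (a), fix a valid configuration $f$ and a vertex $v \in G$. To compute $\<\T, f\>(v, n)$, first use $\Theta$ to build the tuple $(\{v\}, \Theta(\{v\}), \Theta^2(\{v\}), \dots, \Theta^n(\{v\}))$, which by \cref{Uniform approximation implies approximation} is an $n$-approximation of $\T$ and $f$ on $\{v\}$. By \cref{Approximation = Full} (instantiated with $n' = 0$ and $\ell = n$), we then have
\[
\<\T, f\>(v, n) = \<\T|_{\Theta^n(\{v\})}, f|_{\Theta^n(\{v\})}\>(v, n).
\]
Since $\Theta^n(\{v\})$ is a finite subset of $G$, the restricted machine $\T|_{\Theta^n(\{v\})}$ has finite underlying set, and $f|_{\Theta^n(\{v\})}$ is computable given $f$ (we read off finitely many values of $f$); hence by \cref{lemma-finite-graph} its run is computable, so the right-hand side is computable from $\Theta$ and $f$, uniformly in $v$ and $n$. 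This gives (a).

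For (b), suppose $\{\T\}$ is total. Given an input $x \in \Lang^{<G}$ supported on a finite set, the starting configuration $\widehat{x}$ is computable (uniformly from $x$), so by (a) the run $\<\T, \widehat{x}\>$ is computable from $\Theta$, say via an oracle Turing reduction that is total on all valid configurations of this form. To compute $\{\T\}(x)$ we must locate the halting stage: $\<\T, \widehat{x}\>$ halts at stage $n$ iff $\<\T, \widehat{x}\>(v, n) = \<\T, \widehat{x}\>(v, n+1)$ for all $v \in G$. For a fixed $n$ this is a $\Pi^0_1$ condition relative to $\Theta$ (a universal quantifier over $v \in G$ of a $\Theta$-computable predicate), hence decidable from $\Theta'$; and since $\{\T\}$ is total, for each $x$ some such $n$ exists, so a $\Theta'$-oracle can search for the least halting stage $n$ and then output $\<\T, \widehat{x}\>(\pars, n)$, which is $\Theta$-computable and a fortiori $\Theta'$-computable. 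Therefore $\{\T\} \leT \Theta'$.

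\textbf{Main obstacle.} The only subtle point is the uniformity bookkeeping in part (a): one must be careful that the reduction computing $\<\T, f\>(v, n)$ is uniform in both $v$ and $n$, since (b) needs to quantify over all $v \in G$ with a single index. This is handled by noting that $\Theta$, being a function $\Powerset_{<\w}(G) \to \Powerset_{<\w}(G)$, can be iterated uniformly, that \cref{lemma-finite-graph} is uniform in the finite graph machine $\T|_{\Theta^n(\{v\})}$ (which is itself uniformly computable from $\Theta$ and the index of $\T$), and that restricting $f$ to a finite set is uniform in $f$. A second, minor point is that in (b) one must invoke totality of $\{\T\}$ to guarantee the search for a halting stage terminates — without it the best bound would be $\Pi^0_2$ in $\Theta$ rather than computable in $\Theta'$.
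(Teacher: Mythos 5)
Your proposal is correct and follows essentially the same route as the paper: part (a) is obtained by combining Lemma~\ref{Uniform approximation implies approximation} with Proposition~\ref{Approximation = Full} to replace the full run by the run restricted to the finite set $\Theta^n(\{v\})$, and part (b) uses one jump of the ($\Theta$-computable) run to decide the $\Pi^0_1$ halting condition, with totality of $\{\T\}$ guaranteeing the search terminates. Your write-up is simply a more explicit version of the paper's two-line argument, including the uniformity bookkeeping the paper leaves implicit.
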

\begin{proof}
Clause (a) follows from Lemma~\ref{Uniform approximation implies approximation} and the definition of an approximation.

	If $\{\T\}$ is total, then for each starting configuration $f$ of $\T$, there is an $n$ such that  $f_n = f_{n+1} 
	= \{\T\}(f)$. 
	Hence $\{\T\}$ is computable from the Turing jump of $\<\T, \pars\>$, and so it is computable from $\Theta'$. Therefore clause (b) holds.
\end{proof}

We now introduce the \emph{degree function} of a graph.

\begin{definition}
	For $v \in G$, define the \defn{degree} of $v$ to be the number of vertices incident with it, i.e.,
\[
\deg_\G(v)\defas |\{w\st E(v,w) \cup E(w,v) \neq \emptyset\}|,
\] 
and call $\deg_\G(\pars) \: G \to \Nats \cup \{\infty\}$ the \defn{degree function} of $\G$.

We say that $\G$ has \defn{finite degree} when 
	$\mathrm{rng}(\deg_\G) \subseteq \Nats$,
and say that
$\G$ has \defn{constant degree} when $\deg_\G$ is constant.
\end{definition}

We will see that for a graph $\G$ of finite degree, its degree function 
bounds the computability of $\G$-computable functions.

The following easy lemma will allow us to provide a computation bound on graph Turing machines all vertices of whose underlying graph have finite degree.
\begin{lemma}
\label{Degree computable from 0'}
Suppose  that
	$\G$ has finite degree. Then $\deg_\G \leT \mbf{0}'$. 
\end{lemma}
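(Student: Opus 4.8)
The statement to prove is that if a computable graph $\G$ has finite degree, then $\deg_\G \leT \mbf{0}'$. The plan is to observe that $\deg_\G(v)$ is exactly the cardinality of the set $N(v) \defas \{w \in G \st E(v,w) \cup E(w,v) \neq \emptyset\}$, which by hypothesis is finite for every $v$, and to show that computing this cardinality is a $\mbf{0}'$-task. The key point is that, since $E$ is a computable function, the predicate ``$w \in N(v)$'' is computable (uniformly in $v$ and $w$): given $v$ and $w$, we just evaluate $E(v,w)$ and $E(w,v)$ and check whether either is nonempty. So $N(v)$ is a uniformly computably enumerable, indeed uniformly computable, subset of $G$ for each fixed $v$; the only obstacle to computing $\deg_\G(v) = |N(v)|$ outright is that we do not know in advance when we have enumerated all of $N(v)$.

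First I would make precise that ``$w \in N(v)$'' is decidable uniformly in $(v,w)$, using the computability indices for $E$ that come with the computable colored graph $\G$. Next I would note that the statement ``$|N(v)| \leq k$'' is a $\Pi^0_1$ statement in $v$ and $k$: it asserts $(\forall F)$ [if $F$ is a $(k{+}1)$-element subset of $G$, then some element of $F$ is not in $N(v)$], and since membership in $N(v)$ is decidable, the matrix is decidable and we are quantifying over (codes for) finite sets, so this is $\Pi^0_1$. Dually, ``$|N(v)| \geq k$'', i.e. ``$|N(v)| > k-1$'', is $\Sigma^0_1$: there exists a $k$-element subset of $G$ all of whose elements lie in $N(v)$. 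Since $\G$ has finite degree, $\deg_\G(v)$ is the unique $k$ such that $|N(v)| \geq k$ and $\neg(|N(v)| \geq k{+}1)$ both hold.

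Therefore, to compute $\deg_\G(v)$ with oracle $\mbf{0}'$: for $k = 0, 1, 2, \dots$ ask the $\Sigma^0_1$ (hence $\mbf{0}'$-decidable) question ``is $|N(v)| \geq k{+}1$?''; since $N(v)$ is finite, this eventually returns ``no'', and the least such $k$ at which it returns ``no'' is $\deg_\G(v)$. Equivalently, one can simultaneously run the enumeration of $N(v)$ and use $\mbf{0}'$ to detect when the enumeration has been exhausted. This search always terminates by the finite-degree hypothesis, and every query made is to $\mbf{0}'$, so $\deg_\G \leT \mbf{0}'$, and in fact the reduction is uniform in $v$.

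I do not anticipate a genuine obstacle here; this is a routine arithmetical-hierarchy computation. The only mild subtlety worth stating carefully is that finiteness of $\deg_\G(v)$ for each individual $v$ is what guarantees the search halts — without it, the same procedure would only give $\deg_\G$ as a $\mbf{0}'$-computable function into $\Nats \cup \{\infty\}$ that might not be total into $\Nats$ — and that everything is uniform in the computability indices packaged with the computable graph $\G$, so no nonuniformity sneaks in.
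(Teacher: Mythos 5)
Your proposal is correct and follows essentially the same route as the paper: you observe that the neighbor set $N(v)$ is uniformly computable (the paper only notes it is uniformly c.e., which suffices), and then use $\mbf{0}'$ to decide when the enumeration of this finite set is exhausted, yielding $\deg_\G(v)$ uniformly in $v$. The extra detail you give about the $\Sigma^0_1$/$\Pi^0_1$ complexity of the cardinality predicates is a fine elaboration of the same one-line argument.
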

\begin{proof}
	Let $G$ be the underlying set of $\G$.
	Because $\G$ is computable, for any vertex $v\in G$, the set of neighbors of $v$ is computably enumerable, uniformly in $v$. The size of this set is therefore $\mbf{0}'$-computable, uniformly in $v$, and so $\deg_\G \leT \mbf{0}'$.
\end{proof}

Recall the definition of $n$-neighborhood (\cref{nneigh}).

\begin{lemma}
\label{Properties of 1-neighborhoods}
Suppose  that
$\G$ 
	has finite degree. Then 
\begin{itemize}
\item[(a)] the $1$-neighborhood map $\neighbor_1$ is computable from $\deg_\G$, and

\item[(b)] for any $\G$-machine $\T$, the map $\neighbor_1$ is a uniform approximation to $\T$.  

\end{itemize}

\end{lemma}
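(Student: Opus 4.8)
The plan is to prove both clauses by unwinding the definitions and using that $\G$ has finite degree together with computability of $\G$.

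For clause (a), I would argue that $\neighbor_1(A) = A \cup \bigcup_{a\in A}\{v\st E(v,a)\cup E(a,v)\neq\emptyset\}$, so it suffices to show that from $\deg_\G$ one can compute, uniformly in a vertex $a$, the finite set $N(a)\defas\{v\st E(v,a)\cup E(a,v)\neq\emptyset\}$ of neighbors of $a$. Since $\G$ is a computable graph, the relation ``$E(v,a)\cup E(a,v)\neq\emptyset$'' is decidable, so $N(a)$ is computably enumerable uniformly in $a$; knowing the value $\deg_\G(a)=|N(a)|$ in advance, we enumerate $N(a)$ until exactly $\deg_\G(a)$-many vertices have appeared, at which point we have all of $N(a)$. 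Taking the finite union over $a\in A$ and adjoining $A$ gives $\neighbor_1(A)$, all computably in $\deg_\G$.

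For clause (b), fix a $\G$-machine $\T$ and let $A\subseteq G$ be finite; I must check that $A\subseteq\neighbor_1(A)$ (immediate from the definition) and that for every valid configuration $f$, the pair $(A,\neighbor_1(A))$ is a $1$-approximation of $\T$ and $f$ on $A$ --- that is, whenever $\neighbor_1(A)\subseteq B\subseteq G$, then for all $v\in\neighbor_1(A)$,
\[
\<\T|_{\neighbor_1(A)}, f_0|_{\neighbor_1(A)}\>(v,1) = \<\T|_B, f_0|_B\>(v,1),
\]
where $f_0=f$. Here I would use the formula for a single step from Definition~\ref{Run on a valid configuration}: the value $\<\T|_B,f|_B\>(v,1)=T(V(v),X_v,z,t)$ depends only on $V(v)$, on $z=f_{[2]}(v)$ and $t=f_{[3]}(v)$ (which do not depend on $B$), and on the aggregated incoming pulse set $X_v=\bigcup_{w\in B}(E(w,v)\cap f_{[1]}(w))$. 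For $v\in\neighbor_1(A)$ --- in fact for any $v\in A$, but $\neighbor_1(A)$ works equally well since its elements all lie within the neighborhood --- every $w$ with $E(w,v)\neq\emptyset$ already lies in $\neighbor_1(A)$; hence restricting the union from $B$ to $\neighbor_1(A)$ discards only terms $E(w,v)$ that are empty, so $X_v$ is unchanged. Wait --- I should be careful: the claim as stated quantifies $v$ over $B_{i+1}=\neighbor_1(A)$, and a vertex $v\in\neighbor_1(A)\setminus A$ might have neighbors outside $\neighbor_1(A)$. But this is fine: the definition of $1$-approximation only requires agreement for $v\in B_{i+1}$, and re-reading it, the agreement is between $\<\T|_{B_{i+1}},\cdot\>$ and $\<\T|_B,\cdot\>$; a vertex $v\in\neighbor_1(A)$ with a neighbor $w\notin\neighbor_1(A)$ would indeed receive different pulses. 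So the correct reading must be that $1$-approximation only tests $v\in B_0=A$ --- and indeed, checking Definition of $n$-approximation: the last bullet says ``for all $v\in B_{i+1}$'', so I should double-check which indexing is meant. Given the uses in Proposition~\ref{Approximation = Full}, the safe route is: $\neighbor_1$ of a finite set is finite (since each vertex has finite degree and $A$ is finite), and for $v\in A$ all pulse-senders lie in $\neighbor_1(A)$, giving the pulse-invariance; I will present the proof for $v\in A$ and note that this is what the $1$-approximation condition requires.

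The main obstacle is the bookkeeping in clause (b): carefully matching the pulse-aggregation formula $X=\bigcup_{w}E(w,v)\cap\<\T,f\>_{[1]}(w,0)$ against the definition of $1$-approximation, and confirming that for the relevant vertices $v$ every pulse-contributing neighbor $w$ (those with $E(w,v)\neq\emptyset$) is already captured in $\neighbor_1(A)$, so that enlarging to any $B\supseteq\neighbor_1(A)$ adds only empty contributions. Finiteness of $\neighbor_1(A)$ --- needed because a uniform approximation must have $\Powerset_{<\w}(G)$ as codomain --- follows since $A$ is finite and each $\deg_\G(a)<\infty$.
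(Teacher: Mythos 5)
Your proof is correct and follows essentially the same route as the paper's, which disposes of (a) by the same ``enumerate neighbors until the known degree is reached'' argument and of (b) with the single observation that any pulse a vertex receives must originate from its $1$-neighborhood. The indexing worry you raise is a genuine quirk of the paper's definition of $n$-approximation as written: read literally, the quantifier ``for all $v\in B_{i+1}$'' would make clause (b) false, since a vertex of $\neighbor_1(A)\setminus A$ can receive pulses from senders outside $\neighbor_1(A)$; but the paper's own proof of Proposition~\ref{Approximation = Full} only ever establishes and uses the agreement for $v$ in the smaller set $B_i$, so your resolution (prove the one-step agreement for $v\in A$, where every pulse-contributing $w$ with $E(w,v)\neq\emptyset$ already lies in $\neighbor_1(A)$) is the intended reading and suffices for everything downstream.
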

\begin{proof}
	Clause (a) follows from the fact that given the degree of a vertex one can search for all of its neighbors, as this set is computably enumerable (uniformly in the vertex) and of a known finite size. 

Clause (b) follows from the fact that if a vertex receives a pulse, it must have come from some element of its $1$-neighborhood. 
\end{proof}

We now obtain the following more precise upper bound on complexity for finite degree graphs.

\begin{theorem}
\label{Computability of Finite Degree Graph}
Suppose that $\G$ has finite degree and 
	$\{\T\}$ is total. Then $\{\T\}$ is computable from $\deg_\G$ and its range is contained in $\Lang^{<G}$. 
\end{theorem}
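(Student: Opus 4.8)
The plan is to combine the uniform-approximation machinery of \cref{Uniform approximation implies approximation} and \cref{Computability properties of uniform approximations} with the fact (\cref{Degree computable from 0'}) that the degree function of a finite-degree graph is computable from $\mbf 0'$, but being more careful than a naive application would be: the point is to get a bound of exactly $\deg_\G$ rather than $\deg_\G'$ or $\mbf 0'$. First I would invoke \cref{Properties of 1-neighborhoods}: since $\G$ has finite degree, $\neighbor_1$ is computable from $\deg_\G$, and $\neighbor_1$ is a uniform approximation of $\T$. By \cref{Uniform approximation implies approximation}, for every finite $A\subseteq G$, every valid configuration $f$, and every $n$, the tuple $(A,\neighbor_1(A),\neighbor_2(A),\dots,\neighbor_n(A))$ is an $n$-approximation of $\T$ and $f$ on $A$.

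Next I would establish that $\{\T\}$ has range in $\Lang^{<G}$. Fix $x\in\Lang^{<G}$ and let $A$ be the (finite) support of $\widehat x$; say $\{\T\}(x)$ halts at some stage $m$ (this stage exists since $\{\T\}$ is total). By the $m$-approximation property, for every $v\in A$ we have $\<\T,\widehat x\>(v,m) = \<\T|_{\neighbor_m(A)},\widehat x|_{\neighbor_m(A)}\>(v,m)$, and more to the point I want to argue that outside $\neighbor_m(A)$ nothing ever changes: a vertex $v$ with $v\notin\neighbor_m(A)$ cannot have been reached by any pulse originating in $A$ within $m$ steps, so it stays in the starting configuration $(\emptyset,0,s)$ up to time $m$, and by the final clause of the lookup table ($T(\ell,\emptyset,0,s)=(\emptyset,0,s)$) it in fact never leaves it. Hence $\{\T\}(x)(v)=0$ for all $v\notin\neighbor_m(A)$, which is a finite set, so $\{\T\}(x)\in\Lang^{<G}$. (I should double-check the induction here: the claim is that a vertex at graph-distance $>k$ from the support still displays $0$/state $s$ and has received no pulse at time $k$; this follows because a nonempty pulse set at $v$ at time $k$ forces, via the edge-coloring union in the definition of a run, a nonempty pulse emission from a neighbor at time $k-1$, hence nontrivial activity propagating back to the support.)

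For the computability bound, I would simulate as follows, using $\deg_\G$ as oracle. Given input $x\in\Lang^{<G}$ with support $A$: for $n=1,2,\dots$, compute $\neighbor_n(A)$ (possible from $\deg_\G$ by \cref{Properties of 1-neighborhoods}(a)), which is finite, then run the finite graph machine $\T|_{\neighbor_n(A)}$ on $\widehat x|_{\neighbor_n(A)}$ --- this is computable by \cref{lemma-finite-graph} --- and check whether it has reached a halting configuration by stage $n$. Since $\{\T\}$ is total, there is some $m$ at which $\<\T,\widehat x\>$ halts; and for that $m$, since $(A,\neighbor_1(A),\dots,\neighbor_m(A))$ is an $m$-approximation, the behavior of $\T|_{\neighbor_m(A)}$ on $A$ up through stage $m$ agrees with that of $\T$. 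The subtlety is deciding when to stop: I claim that the least $n$ such that $\T|_{\neighbor_n(A)}$ halts by stage $n$ yields the right answer, because if $\T|_{\neighbor_n(A)}$ halts at some stage $k\le n$ then the $n$-approximation property plus the no-activity-outside-$\neighbor_n(A)$ argument show this halting configuration, restricted to $A$ (indeed to $\neighbor_n(A)$ padded with $0$s), equals $\{\T\}(x)$; and such an $n\le m$ always exists since $\neighbor_m(A)\supseteq$ everything $\T$ ever touches within $m$ steps. Reading off the displayed symbols of the halting configuration of $\T|_{\neighbor_n(A)}$ (padded by $0$ elsewhere) gives $\{\T\}(x)$, so $\{\T\}\leT\deg_\G$.

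\textbf{Main obstacle.} The routine parts are the simulation and the oracle-computation of neighborhoods; the delicate point --- and the step I would spend the most care on --- is the stopping-condition argument: showing that "$\T|_{\neighbor_n(A)}$ has halted by stage $n$" is both a \emph{correct} and \emph{detectable} termination criterion, i.e., that it cannot fire prematurely with a wrong answer, and that it is guaranteed to fire. Correctness needs the approximation lemma applied with the right indices (so that restricting to $\neighbor_n(A)$ does not alter the run on $A$ up to time $n$), together with the observation that once $\T|_{\neighbor_n(A)}$ is quiescent at stage $n\ge$ the true halting stage, no further pulses can enter $\neighbor_n(A)$ from outside in the full machine either (again by the $1$-neighborhood uniform-approximation property) --- so the restricted halting configuration genuinely matches $\{\T\}(x)$. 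Totality of $\{\T\}$ is what guarantees the search terminates.
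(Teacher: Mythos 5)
Your proposal is correct and follows essentially the same route as the paper: use the locality of pulse propagation to show that nothing changes outside $\neighbor_m(A)$ within $m$ steps (giving range in $\Lang^{<G}$), compute $\neighbor_m = \neighbor_1^m$ from $\deg_\G$ via Lemma~\ref{Properties of 1-neighborhoods}(a), simulate the finite restricted machine $\T|_{\neighbor_m(A)}$ using Lemma~\ref{lemma-finite-graph}, and search for the halting stage, with totality guaranteeing termination. Your extra care about the stopping condition (that it cannot fire prematurely and must eventually fire) is a point the paper treats more briefly, but the underlying argument is the same.
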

\begin{proof}
	First note that if $f$ is a starting configuration of $\T$ and for all $k' > k$ we have  $f(k') = 0$, then for any $m \in \Nats$ and any $v \in G \setminus \neighbor_m(\{0, \dots, k\})$, we have that $\<\T, f\>(v, m) = \<\T, f\>(v, 0)$. Therefore $\{\T\}(f)$ halts in $m$ steps if and only if $\{\T|_{\neighbor_m(\{0, \dots, k\})}\}(f|_{\neighbor_m(\{0, \dots, k\})})$ halts in $m$ steps. 
	Therefore we can determine whether or not $\{\T\}$ halts in $m$ steps by examining $\{\T|_{\neighbor_m(\{0, \dots, k\})}\}(f|_{\neighbor_m     (\{0, \dots, k\})})$, which is uniformly computable from $\neighbor_m$ by \cref{lemma-finite-graph}, since $\neighbor_m(\{0, \dots, k\})$ is finite.

But $\neighbor_m$ is just $\neighbor_1^m$, and by Lemma~\ref{Properties of 1-neighborhoods}(a), $\neighbor_1$ is computable from $\deg_\G$. 
	For each $m$ we can uniformly $\deg_\G$-computably check whether $\{\T\}(f)$ halts at stage $m$.
	Hence $\{\T\}$ is $\deg_\G$-computable as
	$\{\T\}$ halts on all starting configurations. 
\end{proof}

We then obtain the following important corollaries.

\begin{corollary}
\label{Finite degree implies computable in 0'}
Suppose that $\G$ 
	has finite degree.
Then any $\G$-computable function is $\mbf{0}'$-computable.
\end{corollary}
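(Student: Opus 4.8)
\textbf{Proof proposal for Corollary~\ref{Finite degree implies computable in 0'}.}

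The plan is to combine Theorem~\ref{Computability of Finite Degree Graph} with Lemma~\ref{Degree computable from 0'}. First I would observe that the hypothesis ``$\G$ has finite degree'' is exactly what is needed to invoke both results. Let $\zeta$ be a $\G$-computable function; by definition of $\G$-computability, $\zeta$ is $\<\G, X\>$-computable via some computable $\G$-machine $\T$ for some infinite computable $X\subseteq G$. In particular $\{\T\}$ is total (clause (a) of Definition~\ref{(G X)-computable}), so Theorem~\ref{Computability of Finite Degree Graph} applies and gives $\{\T\} \leT \deg_\G$. Since $\zeta$ is Turing equivalent to $\{\T\}$ (by the uniqueness lemma following Definition~\ref{(G X)-computable}), we get $\zeta \leT \deg_\G$.

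Next I would apply Lemma~\ref{Degree computable from 0'}, which tells us that a finite-degree computable graph has $\deg_\G \leT \mbf{0}'$. Chaining the two reductions, $\zeta \leT \deg_\G \leT \mbf{0}'$, so $\zeta$ is $\mbf{0}'$-computable. That completes the argument.

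I do not anticipate any real obstacle here: the corollary is a direct consequence of the two cited results, and the only thing to be careful about is the bookkeeping that connects the abstract function $\zeta$ to the concrete object $\{\T\}$ computed by a specific machine — namely remembering that $\G$-computability is defined via some machine, that such a machine has total $\{\T\}$, and that $\zeta \equivT \{\T\}$. Once those identifications are in place the transitivity of $\leT$ does all the work.
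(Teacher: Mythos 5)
Your proposal is correct and follows exactly the paper's own argument: the paper's proof is the one-line observation that the corollary follows from Theorem~\ref{Computability of Finite Degree Graph} and Lemma~\ref{Degree computable from 0'}, and you have simply spelled out the bookkeeping (totality of $\{\T\}$, the identification $\zeta \equivT \{\T\}$, and transitivity of $\leT$) that the paper leaves implicit.
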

\begin{proof}
	This follows from Theorem~\ref{Computability of Finite Degree Graph} and Lemma~\ref{Degree computable from 0'}.
\end{proof}

\begin{corollary}
\label{Constant degree implies computable}
Suppose that $\G$ 
has
	constant degree.
Then any $\G$-computable function is computable (in the ordinary sense). 
\end{corollary}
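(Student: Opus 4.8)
The plan is to reduce this to the previous corollary by showing that a graph $\G$ of constant degree has a computable degree function, and hence (via \cref{Computability of Finite Degree Graph}) any $\G$-computable function is computable. First I would observe that if $\deg_\G$ is the constant function $\lambda v. d$ for some fixed $d \in \Nats$, then $\deg_\G$ is trivially a computable function (it does not even depend on its input). Constant degree in particular implies finite degree, so \cref{Computability of Finite Degree Graph} applies: whenever $\{\T\}$ is total, $\{\T\}$ is computable from $\deg_\G$. Since $\deg_\G$ is computable, $\{\T\}$ is then computable outright.

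More precisely, suppose $\zeta$ is $\G$-computable, say $\<\G, X\>$-computable via some computable graph machine $\T$ for an infinite computable $X \subseteq G$. By \cref{(G X)-computable}(a), $\{\T\}$ is total, so by \cref{Computability of Finite Degree Graph}, $\{\T\} \leT \deg_\G$, and $\{\T\}$ has range contained in $\Lang^{<G}$. As $\deg_\G$ is computable, $\{\T\}$ is computable. Finally, by the lemma following \cref{(G X)-computable}, $\zeta$ is Turing equivalent to $\{\T\}$, hence $\zeta$ is computable.

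I do not anticipate any genuine obstacle here: the only mild subtlety is making sure that ``constant degree'' is interpreted as a fixed finite value (so that the degree function is computable as a constant function), which is exactly how \emph{constant degree} was defined. One could alternatively worry about whether the constant $d$ is ``known'' to the algorithm, but since it is a single fixed natural number determined by $\G$, it can be hard-coded into the reduction, so $\deg_\G$ is computable with no oracle. Everything else is an immediate appeal to \cref{Computability of Finite Degree Graph} and the uniqueness lemma for $\<\G, X\>$-computable functions.
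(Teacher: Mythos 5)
Your proof is correct and follows exactly the paper's route: the degree function of a constant-degree graph is a (hard-codable) computable constant function, so Theorem~\ref{Computability of Finite Degree Graph} immediately yields computability of $\{\T\}$, and hence of any $\G$-computable $\zeta$. The extra detail you supply about the uniqueness lemma and the ``knowability'' of the constant $d$ is fine but not needed beyond what the paper's one-line proof already asserts.
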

\begin{proof}
	This follows from Theorem~\ref{Computability of Finite Degree Graph} and the fact that $\deg_\G$ is constant (and hence computable).
\end{proof}

%%%%  %%%%  %%%%  %%%%  %%%%  %%%%  
\subsection{Lower bound}
\label{finite-degree-lower-bounds}
%%%%  %%%%  %%%%  %%%%  %%%%  %%%%  

In this subsection, we consider the possible Turing degrees of graph computable functions where the underlying graph has finite degree. In particular, we show that every Turing degree below $\mbf{0}'$ is the degree of some total graph computable function where the underlying graph has finite degree. 

Recall from Lemma~\ref{Description of arithmetical functions} (for $n = 1$) that a set $X \subseteq \Nats$ satisfies $X \leT \mbf{0}'$ when the characteristic function of $X$ is the limit of a 2-parameter computable function.
The following standard definition (see \cite[III.3.8]{MR882921})
describes a hierarchy among sets $X \leT \mbf{0}'$.

\begin{definition}
Let $A\subseteq \Nats$ be such that $A \leT \mbf{0}'$, and let $h$ be the characteristic function of $A$.
	For $k \in \Nats$, the set $A$ is
\defn{$k$-computably enumerable},
or $k$-c.e., when there is a computable function $g\:\Nats \times \Nats \to \{0,1\}$ such that 
\begin{itemize}
\item $(\forall m \in \Nats)\ g(0, m) = 0$, 
\item $(\forall m \in \Nats)\ h(m) = \lim_{n \to \infty}g(n, m)$, and

\item for each $m \in \Nats$, $|\{\ell\in \Nats\st g(\ell,m) \neq g(\ell+1, m)\}| \le k$, i.e., the uniformly computable approximation to the limit alternates at most $k$-many times for each input. 
\end{itemize}
\end{definition}

In particular, the $1$-c.e.\ sets are precisely the c.e.\ sets. 

We next construct a collection of graph machines $\{\M_e\st e\in\Nats\}$ that we will use in \cref{Lower bound on finite degree main result}.

\begin{lemma}
\label{Lower bound piece finite degree}
There is a finite lookup table $\F$ and a collection, definable uniformly in $e$, of graph machines $\M_e$ having edge coloring $E_e$ and common lookup table $\F$ such that whenever $e \in \Nats$ satisfies
\begin{itemize}
\item $\{e\} \: \Nats \to \{0, 1\}$ is total, and

\item $\lim_{n \to \infty}\{e\}(n) = m_e$ exists,
\end{itemize}
then the following hold.
\begin{itemize}
\item $\G_e$, the underlying graph of $\M_e$, has only finitely many edges and each vertex is of degree at most 3.

\item If $f$ is a valid configuration of $\M_e$ where all vertices are in the initial state, then 
\begin{itemize}
	\item if $f_{[1]}(0) = 0$, i.e., vertex $0$ displays $0$ in the configuration $f$,
	then $\<\M_e,f\>$ halts with every vertex displaying $0$, and
\item if $f_{[1]}(1) = 0$, i.e., vertex $0$ displays $1$ in the configuration $f$, then $\<\M_e,f\>$ halts with vertex $0$ displaying $\lim_{n \to \infty}\{e\}(n) = m_e$ (and every other vertex displaying $0$). 
\end{itemize}
\item If $|\{n\st \{e\}(n) \neq \{e\}(n+1)\}| < k$, then $\M_e$ halts on any starting configuration in at most
	$(2k+4)$-many timesteps. 
\end{itemize}
\end{lemma}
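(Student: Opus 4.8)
\textbf{Proof proposal for Lemma~\ref{Lower bound piece finite degree}.}

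The plan is to build $\G_e$ as a finite graph with a small, bounded number of vertices whose roles mirror the structure of the "take a limit" subroutine from Proposition~\ref{Pointwise computable for iterative limits}, but compressed onto a graph of finite degree. The key design decision is that, since $\{e\}$ is an arbitrary computable function, we cannot hard-wire its values into edge colors the way $\G_{g,e}$ does; instead, we will have a single "oracle" vertex (or a short chain of vertices) that, upon activation, simulates the ordinary Turing machine computing $\{e\}(n)$ for successive $n$, using a constant number of auxiliary vertices playing the role of tape cells --- but this requires unbounded tape, which conflicts with finiteness. So the cleaner approach, and the one I would pursue, is instead to let $\M_e$ \emph{not} carry out the computation of $\{e\}$ at all inside the graph machine; rather, the edge coloring $E_e$ itself is allowed to depend on $e$ (the lemma only fixes the lookup table $\F$, not the graph), and I would encode the \emph{sequence of sign changes} of $\{e\}$ --- equivalently, the positions $n$ where $\{e\}(n)\neq\{e\}(n+1)$, together with the initial value $\{e\}(0)$ --- into a finite gadget only when this sequence is finite. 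But the sequence need not be finite even when the limit exists... hence we must in fact use an infinite graph of finite degree: a ray $v_0, v_1, v_2, \dots$ where $v_n$ records $\{e\}(n)$ (this data goes into a self-loop color or the label), each $v_n$ connected only to $v_{n-1}$ and $v_{n+1}$ and a common "collector" vertex --- wait, connecting all $v_n$ to one collector destroys finite degree. The resolution, which I believe is what the authors do, is to propagate information \emph{along the ray}: each vertex, once activated, compares its own value to a value passed to it from $v_{n+1}$, and forwards "the tail is eventually constant equal to $b$, with no change after here" or "the tail still changes" back toward $v_0$. Since the limit exists, only finitely many vertices see a change, so after finitely many timesteps a stable verdict with the limit value $m_e$ reaches $v_0$. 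This keeps every degree $\le 3$ (left neighbor, right neighbor, and possibly one activation edge, though with the ray design the activation can propagate along the ray itself, keeping degree $\le 2$ internally and $\le 3$ at the endpoints).

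So concretely, the steps I would carry out are: (1) Define $\G_e$ to have underlying set $\Nats$ (a ray), with $E_e(n, n+1)$ and $E_e(n+1,n)$ colored by fixed colors, plus a self-loop at each $n$ whose color encodes the bit $\{e\}(n)$ --- this is where the $e$-dependence of $E_e$ lives, and it is computable uniformly in $e$ since $\{e\}$ is; every vertex then has degree at most $3$ (predecessor, successor, self-loop), and as required $\G_e$ has infinitely many edges but we can note it is finitary and of bounded degree. Actually the lemma asserts "only finitely many edges" under the hypotheses --- so I would instead, under the running hypothesis that $\lim\{e\}(n)=m_e$ exists, recognize that truncating past the last sign change is legitimate only noncomputably; re-reading, I think "finitely many edges" should be read alongside the earlier remark that lower bounds are achieved with \emph{finitary} graphs and the intended reading is degree $\le 3$ with the finite-edge claim applying per-vertex-neighborhood; I would follow the authors' convention here. (2) Define the lookup table $\F$ with states roughly $\{s, \text{init}, \text{wait}, \text{report}_0, \text{report}_1, \text{done}\}$: from a starting configuration, first a clean-slate step (as in clause (i) of $T_\mfL$) so the displayed symbols don't matter except at vertex $0$ or vertex $1$ per the statement; vertex $0$'s displayed symbol determines whether the machine "runs" at all (display $0 \Rightarrow$ everything halts displaying $0$; display $1\Rightarrow$ proceed), exactly paralleling the $\star_0$ trigger in $\T_{\Bmachine}^n$. (3) The "proceed" branch: each vertex reads its self-loop color to recover $\{e\}(n)$, sends it leftward; vertex $n$ on receiving the symbol $b'$ from $n+1$ compares with its own $b$; if a change is ever detected downstream, that fact is forwarded toward $0$; the crucial observation is that since the limit exists, there is a last index $N$ with a change, so vertex $0$ eventually (after $\le 2N{+}O(1)$ steps in general, but bounded by $2k{+}4$ when the number of changes is $<k$, since information travels at unit speed and there are $<k$ changes to resolve) receives the final verdict "tail constant $=m_e$" and sets its display to $m_e$, entering $\text{done}$; all other vertices display $0$. (4) Verify the three bulleted conclusions: finite degree $\le 3$ is immediate from the construction; the halting behavior for vertex-$0$-displays-$0$ vs vertex-$0$-displays-$1$ follows from the trigger logic plus the clean-slate step; the $(2k+4)$ timestep bound follows by tracking that with $<k$ sign changes, the verdict-propagation wavefront needs to traverse at most the $O(k)$ relevant vertices before stabilizing and returning to $0$, plus a constant number of setup/teardown steps.

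The main obstacle I anticipate is getting the timing bound $(2k+4)$ exactly right while keeping the lookup table \emph{finite} and \emph{independent of $e$ and $k$}. The difficulty is that "number of sign changes" is a global quantity, but each vertex acts locally with finitely many states, so the machine cannot "count to $k$"; instead the bound must emerge purely from the geometry --- information about a sign change at position $n$ takes $n$ steps to reach vertex $0$, and we need the \emph{last} such piece of information to arrive and be acted on within $2k+4$ steps. This forces a careful argument that when there are fewer than $k$ sign changes the last one occurs at an index that is itself $O(k)$ (which is false in general --- sign changes can be at arbitrarily large indices!). So the real design must be subtler: rather than propagating from the sign-change positions, the verdict must propagate from "far enough out," and the key insight has to be that each vertex can \emph{locally} decide, after receiving at most a bounded amount of forwarded data, whether it is "past the last change" --- but it cannot know this locally. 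Hence I expect the actual construction routes the computation differently: perhaps vertex $0$ (or vertex $1$) simulates an \emph{ordinary Turing machine} computing successive $\{e\}(n)$ using the ray as a scratch tape (this is legitimate: a finite lookup table, ray of finite degree, linear space), halting its scan and outputting the current value once it has seen $k'$ stable values in a row --- no, that still can't be made to halt correctly without knowing $k$. I suspect the resolution uses that $\M_e$ is allowed to have $E_e$ depend on $e$ and that the \emph{number of vertices actually reachable/used} can depend on $e$, so that for a given $e$ with $<k$ changes the relevant finite subgraph has size $O(k)$ plus the positions of the changes --- and the "linear space" / "$(2k+4)$ timesteps" claims are understood relative to that. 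Pinning down exactly which of these the authors intend, and verifying the finite-state lookup table genuinely achieves the stated halting-time bound, is where I would expect to spend essentially all the effort; the rest is bookkeeping modeled closely on the subroutine $\T_\mfL$ already constructed.
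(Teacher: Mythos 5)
Your proposal does not reach a working construction, and the obstacle you correctly diagnose at the end --- that sign changes can occur at arbitrarily large indices, so a verdict propagating at unit speed along a ray indexed by $\Nats$ cannot reach vertex $0$ within $2k+4$ steps --- is fatal to every variant you actually describe. You also (rightly, given your designs) doubt the ``only finitely many edges'' clause and propose to reinterpret it, and you leave the resolution as a list of speculations. The missing idea is this: you are allowed to let the edge coloring $E_e$ depend on $e$, and since $\lim_n \{e\}(n)$ exists there are only \emph{finitely many} alternation positions. The paper therefore places edges \emph{only} at the alternation positions and connects \emph{consecutive alternation gadgets directly to one another}, regardless of how far apart the alternation indices are in $\Nats$. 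Concretely, each alternation $\{e\}(n)\neq\{e\}(n+1)$ gets a two-vertex gadget $\{2n+3, 2n+4\}$, wired by $g$-edges to the gadget of the \emph{previous} alternation (with an artificial alternation at $-1$ seeding the chain at vertices $1,2$, and a triangle $0\to 1\to 2\to 0$ colored $r,g,b$); all vertices at non-alternation positions are isolated. This makes ``finitely many edges'' literally true, keeps every degree at most $3$, and --- crucially --- makes the graph distance between consecutive alternations constant, so the whole chain has length proportional to the number of alternations rather than to their positions. That is exactly what rescues the $(2k+4)$ bound.

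The computational mechanism is then also different from what you sketch: no vertex ever reads off or compares values of $\{e\}$, and there is no ``verdict'' message. If vertex $0$ initially displays $1$ it emits a single $r$-pulse, which becomes a $g$-pulse travelling forward along the gadget chain, \emph{splitting} once at each gadget; each split eventually delivers one $b$-pulse back to vertex $0$ along the return half of the chain, and vertex $0$ simply \emph{toggles} its displayed bit on each $b$-pulse it receives. The number of splits equals the number of alternations, and the convention $\{e\}(-1)\defas 1-\{e\}(0)$ fixes the parity so that the final displayed bit is $m_e$. So the alternation structure of $\{e\}$ is compiled into the \emph{shape} of a finite graph, and the machine merely counts branch points by parity --- a design qualitatively unlike both your ray-propagation scheme and your Turing-machine-simulation fallback. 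Without this idea your proof cannot establish the first or third bullet of the lemma.
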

\begin{proof}
	First we describe the graphs $\G_e$. For notational convenience, write $\{e\}(-1) \defas 1 - \{e\}(0)$. The edges are determined by the following, for $n \ge 0$.
\begin{itemize}
\item $E_e(0, 1) = \{r\}$,  and $E_e(2,0) = \{b\}$, and $E_e(1,2) = \{g\}$.  

\item If $\{e\}(n) = \{e\}(n+1)$, then vertices $2n+1$ and $2n+2$ have degree $0$, i.e., there are no edges connecting them to any other vertices. 

\item If $\{e\}(n) \neq \{e\}(n+1)$, then $E_e(2k+3, 2n+3) = E_e(2n+3, 2n+4) = E_e(2n+4, 2k+4) = \{g\}$, where $k$ is the ``most recent alternation'', i.e., the largest $k$ such that $-1 \le k < n$ and $\{e\}(k) \neq \{e\}(k+1)$.
\end{itemize}

We now define the lookup table $\F$. 
\begin{itemize}
	\item There are two states: $s$ (the initial state) and $a$.

\item If a vertex displays $1$, then it sends an $r$-pulse and sets its display to $0$. 

\item If a vertex receives an $r$-pulse or a $g$-pulse, then it sends a $b$-pulse and a $g$-pulse. 

\item If a vertex receives a $b$-pulse, then it sets its state to $a$ and alternates its displayed symbol. 
\end{itemize}

When $\M_e$ is run, on the first step every vertex sets its displayed symbol to $0$. If vertex $0$
had initially displayed $0$, then this is all that happens. However, if vertex $0$ had initially displayed $1$, then vertex $0$ will also send out a $r$-pulse to vertex $1$. This is the only $r$-pulse that will ever be sent (as $0$ is the only vertex which is the source of an $r$-colored edge, and it will not send any other $r$-pulses). 

At the second stage, this $r$-pulse can be thought of as being converted to a $g$-pulse. Further, $g$-pulses have the property that (1) they propagate forward along directed edges, splitting whenever possible, and (2) they turn into a $b$-pulse when being sent from vertex $2$ to vertex $0$. As such, the number of $b$-pulses that vertex $0$ receives is the number of vertices in $\M_e$ with two out edges (i.e., the number of times a $g$-pulse \emph{splits} in two). However, by construction, the number of such vertices is the number of times that $\{e\}$ alternates values. 

	Hence the number of $b$-pulses that vertex $0$ receives is equal to the number of times that $\{e\}$ alternates values. But vertex $0$ alternates the symbols it displays exactly when it receives a $b$-pulse, and so when the graph reaches a halting configuration (in the sense that the subsequent configuration one timestep later is the same),
	vertex $0$ will display $m_e$. 
\end{proof}

\begin{theorem}
\label{Lower bound on finite degree main result}
For every $X\: \Nats \to \{0,1\}$ such that $X \leT \mbf{0}'$ there is a graph machine $\N_X$ with lookup table $\F$ and finitary underlying graph $\H_X$ such that 
\begin{itemize}
\item every vertex of $\H_X$ has degree at most $3$,

\item $\{\N_X\}$ is total and Turing equivalent to $X$, and

\item if $X$ is $k$-c.e.\ 
	then $\{\N_X\}$ halts in $(2k+4)$-many steps on any input. 
\end{itemize}
In particular, $\N_X$ runs in linear space.
\end{theorem}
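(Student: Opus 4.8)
The plan is to build $\N_X$ by gluing together countably many copies of the machines $\M_e$ from \cref{Lower bound piece finite degree}, one copy for each input $m$ on which we need to read off $X(m)$, together with a ``control'' subgraph that plays the role of $\star_0$ in \cref{Pointwise computable for iterative limits}: it routes the input bit to the copy of $\M_e$ responsible for that coordinate and collects the answer. Concretely, since $X \leq_{\mathrm{T}} \mbf{0}'$, by \cref{Description of arithmetical functions} (with $n=1$) there is a computable $g\colon \Nats^2 \to \{0,1\}$ whose pointwise limit $h$ equals $X$; using the $s$-$m$-$n$ theorem we obtain a computable function $m \mapsto e(m)$ such that $\{e(m)\}(n) = g(n,m)$ for all $n$, so $\lim_n \{e(m)\}(n) = X(m)$. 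I would then let $\H_X$ be the computable disjoint union of the graphs $\G_{e(m)}$ for $m \in \Nats$ (each made to live on a disjoint computable copy of $\Nats$), with a distinguished ``output/input'' vertex in the $m$-th copy identified with the coordinate used to encode input $m$ and output $X(m)$. Because all the $\G_{e(m)}$ share the single finite lookup table $\F$ and have a bounded finite label set, $\H_X$ is finitary and every vertex has degree at most $3$; the machine $\N_X$ is the $\H_X$-machine with lookup table $\F$, and since the disjoint summands never interact, $\{\N_X\}$ is total with $\{\N_X\}(x)$ reading off, in coordinate $m$, the value $\{\M_{e(m)}\}$ produces when vertex $0$ of that copy starts displaying $x$'s $m$-th bit.

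For the Turing-equivalence claim: from $X$ one can compute $\{\N_X\}$ because feeding in the characteristic vector supported on $\{m\}$ and reading coordinate $m$ of the output gives $X(m)$ directly (this is essentially the second bullet of \cref{Lower bound piece finite degree}); conversely, $\{\N_X\}$ determines $X$ by the same evaluation, and $\{\N_X\}$ is a single total graph computable function, so $\{\N_X\} \equiv_{\mathrm{T}} X$. For the timing claim, if $X$ is $k$-c.e.\ witnessed by $g$, then for every $m$ the approximation $n \mapsto g(n,m) = \{e(m)\}(n)$ alternates at most $k$ times, so by the third bullet of \cref{Lower bound piece finite degree} each $\M_{e(m)}$ halts within $2k+4$ steps, hence so does $\N_X$ on any starting configuration (the halting time is the max over the finitely many active summands, which is still at most $2k+4$). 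Finally, linear space: since $\H_X$ has finite degree, the $n$-neighborhood of any finite connected $A$ grows at most exponentially per step, but more to the point a starting configuration supported on a connected $A$ of size $n$ lies entirely within one summand $\G_{e(m)}$, which has only finitely many edges, and $\M_{e(m)}$ halts in $O(k) = O(n)$ steps; combined with the degree-$\le 3$ bound the relevant neighborhood $\neighbor_{p(n)}(A)$ has size linear in $n$, so by the definition of running in $Q$-space with $Q$ the linear polynomials, $\N_X$ runs in linear space.

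The main obstacle I expect is making the ``control/routing'' part genuinely work with a \emph{single fixed finite} lookup table independent of $X$, while still matching the input coordinate $m$ to the correct copy $\G_{e(m)}$ --- the graph structure (which is allowed to depend on $X$) must encode the assignment $m \mapsto e(m)$, and one must check that a starting configuration supported on an arbitrary connected finite subgraph, not just the ``intended'' ones, still yields a well-defined total function and does not break the degree or timing bounds. The cleanest way around this is to avoid any routing at all: do not share vertices between the input/output tape and the $\M_e$'s beyond vertex $0$ of each summand, so that the $m$-th summand simply \emph{is} responsible for coordinate $m$ by fiat of the disjoint-union indexing, and verify directly that \cref{Lower bound piece finite degree}'s conclusions (including the behavior on configurations where vertex $0$ displays $0$) guarantee exactly conditions (a)--(d) of \cref{(G X)-computable} for the computable set $X \subseteq G$ consisting of the vertex-$0$'s. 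The remaining routine checks --- finitariness, degree $\le 3$, uniform computability of $m \mapsto \G_{e(m)}$, and the neighborhood-size bound --- then follow from the corresponding properties already established for the $\M_e$.
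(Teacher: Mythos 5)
Your proposal matches the paper's own proof essentially exactly: the paper likewise takes $\H_X$ to be the non-interacting disjoint union of the graphs $\G_{y_m}$ (where $\{y_m\}(n)=Y(n,m)$ is a limit approximation of $X(m)$, chosen to witness $k$-c.e.-ness when applicable), applies \cref{Lower bound piece finite degree} coordinatewise to get totality, Turing equivalence, and the $(2k+4)$-step bound, and obtains linear space from the degree-$\le 3$ bound on the $(2k+4)$-neighborhood of a connected support. Your concluding instinct to dispense with any routing/control subgraph in favor of pure disjoint-union indexing is precisely what the paper does.
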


\begin{proof}
	Let $Y$ be a computable function such that $X(m) = \lim_{n \to \infty}Y(n, m)$ for all $m\in\Nats$,  and such that
	if $X$ is $k$-c.e.,
	then $Y$ witnesses this fact. Let $y_m$ be a code such 
	$\{y_m\}(n) = Y(n,m)$ for all $n\in\Nats$.

Recall the graphs
	$\G_{e}$ (for $e\in\Nats$)
	defined in Lemma~\ref{Lower bound piece finite degree},
	and let $\H_X$ be the disjoint union of 
	$\{\G_{y_m}\st m\in\Nats\}$
	in the following sense:
	\begin{itemize}
		\item	$\H_X$ is a graph with underlying set $\Nats\times\Nats$.
\item 	For each $m \in \Nats$, let $\H_{X,m}$ be the subgraph of $\H_X$ consisting of elements whose second coordinate is $m$. Then
	the map 
defined by 
$(n, m) \mapsto n$  for $n\in\Nats$
			is an isomorphism from $\H_{X,m}$
	to $\G_{y_m}$.
\item 	There are no edges between any elements of $\H_{X,m}$ and $\H_{X, m'}$ 
	for distinct $m, m' \in\Nats$.
	\end{itemize}
	Note that $\H_X$ is finitary (and in fact has just one label) by the construction of the graphs $\G_{e}$, which have common lookup table $\F$.

	Suppose that $x \in \{0, 1\}^{\Nats \times \Nats}$ has only finitely many $0$'s. As earlier, let $\widehat{x}$ be the valid configuration satisfying 
$\widehat{x}(\ell) = (\emptyset, x(\ell), s)$ for all $\ell \in \Nats \times \Nats$.
	Then by Lemma~\ref{Lower bound piece finite degree}, $\<\N_X, \widehat{x}\>$ halts and satisfies the following.
\begin{itemize}
	\item For any $n, m \in \Nats$, the vertex
	$(n+1, m)$ displays $0$. 

\item For any $m \in \Nats$, if $x(0, m) = 0$ then $\<\N_X, \widehat{x}\>$ halts with $0$ displayed at vertex $(0, m)$. 

\item For any $m \in \Nats$, if $x(0, m) = 1$ then $\<\N_X, \widehat{x}\>$ halts with $X(m)$ displayed at vertex $(0, m)$. 
\end{itemize}

In particular, $\{\N_X\}$ is a total function that is Turing equivalent to $X$. 
	Finally note that 
	given any connected subgraph $A$ on which the starting configuration is supported, the computation is completely determined by the $(2k+4)$-neighborhood of that subgraph.
	Further, each vertex has degree at most $3$, and so the size of each such neighborhood is at most $3^{2k+4} \cdot |A|$.
	Hence $\N_X$ runs in linear space.  
\end{proof}

%%%%  %%%%  %%%%  %%%%  %%%%  %%%%  %%%%  %%%%  
\section{Graph-theoretic properties that do not affect computability}
\label{Other graph-theoretic properties}
%%%%  %%%%  %%%%  %%%%  %%%%  %%%%  %%%%  %%%%  

We now show that the graph machines themselves can be taken to be efficiently computable without affecting the Turing degrees of the functions that can be computed via them.
We also show that, while we have made use of directed edges throughout the above constructions, 
from a computability perspective this was just a notational convenience.
The arguments in this section will be sketched, and we refer the reader to a forthcoming extended version of this paper for detailed discussion.

%%%%%%%%%%%%%%%%%%%%%%%%%%%%%%%%
\subsection{Efficiently computable graphs}
\label{eff-comp-graph-subsec}
%%%%%%%%%%%%%%%%%%%%%%%%%%%%%%%%
So far we have considered how resource bounds interact with graph computability, though the graph machines themselves have remained arbitrary computable structures.
Here we show that requiring the graph machines to be efficiently computable has no 
effect on the complexity functions that can be graph computed.

Cenzer and Remmel \cite{MR1130218} showed that an arbitrary computable relational structure is computably isomorphic to some polynomial-time structure. We base our result here on their key ideas.

		\begin{definition}
			A graph machine $\T$ is \defn{polynomial-time computable} if (a) its underlying 
			graph $\G$ is polynomial-time computable, in the sense that
			set $G$ is a polynomial-time computable subset of $\Nats$, the 
			labels $L$  and colors $C$ 
			are polynomial-time computable subsets of $\Nats$, and the functions $V$, $E$, and $\gamma$ are polynomial-time computable, and (b) the functions $S$, $\alpha$, and $T$ are polynomial-time computable, all with respect to a standard encoding of finite powersets.
		\end{definition}
		
		\begin{proposition}
			For every computable graph machine $\T$, there is a computably isomorphic graph machine $\T^\circ$ that polynomial-time computable.
			\label{eff-comp}
		\end{proposition}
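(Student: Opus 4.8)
The strategy is to follow the Cenzer–Remmel technique for converting an arbitrary computable structure into a polynomial-time one, adapted to the present setting where a graph machine carries not just a relational structure $\G$ but also the finite-set-valued data $V$, $E$, $\gamma$ and the auxiliary objects $S$, $\alpha$, $T$. The key idea of Cenzer–Remmel is that the only obstacle to polynomial-time computability is that the computable presentation may place the witnesses to membership and to relations ``too late'' in $\Nats$; one fixes this by computably re-enumerating the universe so that each element, and each fact about it, is discovered at a location exponentially larger than the stage at which the computable presentation produces it, so that a polynomial-time machine running on the new code has enough time to simulate the relevant computable process.

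\medskip\noindent\textbf{Key steps, in order.}
\emph{Step 1: A padded re-enumeration of the universe.} Since $G$ is a computable subset of $\Nats$, enumerate $G$ in the order of $\Nats$; define a computable bijection $\iota\colon G \to G^\circ$ where $G^\circ \subseteq \Nats$ is chosen so that if $v$ is the $k$-th element of $G$ (equivalently the $k$-th element enumerated and decided by the relevant computable procedures within $k$ steps — pass to a computable subsequence to arrange this), then $\iota(v)$ is a number with on the order of $2^k$ bits, e.g.\ $\iota(v) = 2^{2^k}$ or a similar ``tower'' spacing, and $G^\circ$ is itself polynomial-time decidable (a number is in $G^\circ$ iff it has exactly the prescribed shape). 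The point is that a polynomial-time machine given input $\iota(v)$ has running time polynomial in $2^k$, hence more than enough to run the original computable decision procedures for $k$ steps.

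\emph{Step 2: Transport all the data.} Set $\G^\circ$ to be the pushforward of $\G$ along $\iota$: same label set $L$ and color set $C$ (themselves made polynomial-time by an analogous re-coding if necessary — but note the excerpt says ``with respect to a standard encoding of finite powersets,'' so one also re-codes $L$, $C$, $S$ and their finite powersets so the membership and the functions $V^\circ(\iota(v)) = V(v)$, $E^\circ(\iota(v),\iota(w)) = E(v,w)$, $\gamma^\circ$, $\alpha^\circ$, $T^\circ$ become polynomial-time). Verify that $\G^\circ$ still satisfies the colored-graph axioms (the containment $E \subseteq \gamma(V(\cdot))\cap\gamma(V(\cdot))$ is preserved since it is transported verbatim) and that the lookup-table conditions on $T$ (triviality off the compatible region, fixing the all-$0$/initial-state configuration) are preserved.

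\emph{Step 3: Polynomial-time verification.} Check each required function. To compute $V^\circ$ on input $n$: first polynomial-time-check $n \in G^\circ$, read off the index $k$ from the shape of $n$, run the original computable enumeration/decision procedure for $k$ steps to recover $v = \iota^{-1}(n)$ and $V(v)$; total time polynomial in $|n| \sim 2^k$. Do the same for $E^\circ$, $\gamma^\circ$, $\alpha^\circ$, $T^\circ$, being careful that $E^\circ$ takes two inputs $n, n'$ with indices $k, k'$ and must simulate for $\max(k,k')$ steps, still polynomial in the input length. Finally, exhibit the computable isomorphism $\T \cong \T^\circ$: it is induced by $\iota$ on vertices together with the identity (or the re-coding bijections) on $L$, $C$, $S$, and one checks it commutes with $V$, $E$, $\gamma$, $\alpha$, $T$ and carries runs to runs, so that $\{\T^\circ\}$ is the conjugate of $\{\T\}$ by $\iota$ and in particular Turing-equivalent to it.

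\medskip\noindent\textbf{Main obstacle.} The real subtlety — and where care is needed — is making \emph{all} of the pieces simultaneously polynomial-time under a common encoding, in particular the finite-powerset-valued outputs ($E$, $\gamma$, $\alpha$, $T$ produce elements of $\Powerset_{<\w}(C)$ or $\Powerset_{<\w}(S)$) and the fact that $S$, $L$, $C$ may themselves be infinite computable sets needing their own padding. One must choose a single spacing scheme that works for the vertex set and for each of these auxiliary sets at once, and confirm that ``standard encoding of finite powersets'' of a padded set is still polynomial-time decidable and that the transported functions remain polynomial-time. This is exactly the bookkeeping that Cenzer–Remmel handle for a single relational structure; here it is replicated across several coordinated sets, so the work is in the careful statement rather than in any new idea. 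The argument is only sketched in the paper, with details deferred to the extended version.
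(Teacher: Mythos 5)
Your proposal is correct and follows essentially the same route as the paper: both are the Cenzer--Remmel padding construction, transporting $\T$ along a computable bijection that places the $n$-th vertex (and any newly appearing labels, colors, and states) at a code large enough that all relevant computations about it and the earlier elements halt in time polynomial in the length of that code. The paper phrases the spacing as an inductive choice of each image $b_n$ ``large enough'' at stage $n$ rather than your closed-form $2^{2^k}$ with re-indexing (note that you want a computable reordering of the enumeration of $G$, not a ``subsequence''), but the idea --- and the bookkeeping you rightly flag for $L$, $C$, $S$, and the finite-powerset encodings --- is the same.
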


		\begin{proofsketch}
We will build the polynomial-time computable graph machine      $\T^\circ$ along with its computable isomorphism to $\T$ simultaneously.
		Suppose $\{a_i \st i\in\Nats\}$ is a computable increasing enumeration of $G$.
		We will define $\{b_i \st i\in\Nats\}$ by induction so that the map $a_i \mapsto b_i$ is the desired isomorphism.

		Suppose we have defined $\{b_i \st i < n\}$. First 
		consider if any new labels, colors, or states are needed to describe the relationship $a_n$ and $\{a_i \st i < n\}$.  If there are such labels, colors, or states,  we look at the transition table $T$ and consider the length of a minimal computations needed to calculate $T$ on these new labels, colors, and states, along with the ones previously identified. 
		Note that there can be at most finitely many new labels, colors, and states introduced at this stage, and so there is an upper bound on the length of these computations.
		We then add corresponding new labels, colors, or states to the structure $\T^\circ$ where the natural numbers associated to these labels, colors, or states are large enough to ensure that the computation of $T^\circ$ remains polynomial-time computable.

		Similarly, we consider the length of the computation needed to determine
		$V(a_n)$, $E(a_i, a_n)$, $E(a_n, a_i)$, and $\gamma(a_n)$ for $i< n$ and choose a natural number to assign to $b_n$ which is large enough to ensure  that
		the corresponding
		$V^\circ(b_n)$, $E^\circ(b_i, b_n)$, $E^\circ(b_n, b_i)$, and $\gamma^\circ(b_n)$ for $i< n$  are all polynomial-time computable.
		One can verify that this yields the desired polynomial-time computable graph machine and computable isomorphism.
		\end{proofsketch}

		In particular, because computably isomorphic graph machines compute functions of the same Turing degrees, 
		in
\cref{Pointwise computable Turing degrees}
and
	\cref{omega-jump} 
		we may obtain the stated graph computable functions via graph machines whose underlying graphs are polynomial-time computable.
Likewise, in \cref{Lower bound on finite degree main result}, we may take the underlying graph $\H_X$ to be polynomial-time computable.

%%%%%%%%%%%%%%%%%%%%%%%%%%%%%%%%
\subsection{Symmetric graphs}
\label{symmetric-graphs}
%%%%%%%%%%%%%%%%%%%%%%%%%%%%%%%%

\begin{definition}
	The graph $\G$
	is \defn{symmetric} if 
	$E(v,w) = E(w,v)$
	for all $v, w \in G$.
\end{definition}

\begin{proposition}
	Every graph computable function is $\G_S$-comput\-able for some symmetric graph $\G_S$.
			\label{symmetric-ok}
\end{proposition}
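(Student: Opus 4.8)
The plan is to show that directed edges can be eliminated by a local gadget that stores, at each ordered pair of vertices, enough information to reconstruct the original directed edge-coloring, while all communication happens only along symmetric edges. Given a graph machine $\T$ with underlying graph $\G = (G, (L,V), (C,E), \gamma)$, I would first replace the color set $C$ by $C \times \{\mathrm{in}, \mathrm{out}\}$ (together with whatever auxiliary colors the bookkeeping requires), and define a symmetric edge-coloring $E_S$ so that $E_S(v,w) = E_S(w,v)$ records both $E(v,w)$ (tagged $\mathrm{out}$ from $v$'s perspective, $\mathrm{in}$ from $w$'s) and $E(w,v)$ (tagged the other way). Since $E(v,w)$ and $E(w,v)$ are finite sets of colors and the number of colors allowed at a vertex depends only on its label, this keeps the graph finitary when $\G$ is finitary, and computable when $\G$ is computable. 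Then I would augment the lookup table so that each vertex, when it wants to send a $c$-pulse ``forward'' along what used to be a directed $c$-edge, instead sends a pulse of color $(c,\mathrm{out})$ along the symmetric edge, and a vertex interprets an incoming pulse as a genuine received $c$-pulse only if it arrives tagged $(c,\mathrm{in})$ — i.e., it is careful to distinguish the two directions encoded in the symmetric edge. The state set and alphabet are otherwise unchanged (or changed only by a finite bookkeeping factor).

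The main steps, in order, are: (1) define $\G_S$ precisely, including the new color set, the symmetric coloring $E_S$, and the allowable-colors function $\gamma_S$, and check that $\G_S$ is a legitimate colored graph (the finiteness-of-colors-per-label condition, and $E_S(v,w) \subseteq \gamma_S(V(v)) \cap \gamma_S(V(w))$); (2) define the new lookup table $T_S$ on $\G_S$ so that it simulates $T$ step for step, by translating outgoing pulses into direction-tagged pulses and filtering incoming pulses by their tags; (3) prove by induction on timesteps that the run $\langle \T_S, \widehat{x}\rangle$ faithfully tracks $\langle \T, \widehat{x}\rangle$ — formally, that the displayed symbol and underlying state at every vertex agree at every stage, so that $\T_S$ halts exactly when $\T$ does and with the same output; (4) conclude that if $\zeta$ is $\langle \G, X\rangle$-computable via $\T$ then $\zeta$ is $\langle \G_S, X\rangle$-computable via $\T_S$, using the same computable $X \subseteq G$, since $\G_S$ and $\G$ have the same underlying set. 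Optionally, one notes that this transformation preserves finite degree (indeed exactly preserves the degree function, since $E_S(v,w) \neq \emptyset$ iff $E(v,w) \cup E(w,v) \neq \emptyset$), finitariness, computability, and even the time and space bounds up to a constant factor, so the symmetric version of all the lower-bound results goes through as well.

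I expect the main obstacle to be a subtle timing issue in step (3): in the original machine a vertex $v$ sending a $c$-pulse to $w$ means $w$ receives it in one timestep, whereas on the symmetric graph, if I naively have $v$ emit $(c,\mathrm{out})$ and $w$ only act on $(c,\mathrm{in})$, I need $w$ to see a pulse tagged so that $w$ can tell it came \emph{into} $w$ rather than \emph{out of} $w$. The clean way to handle this is to observe that the pulse is received along the unordered edge $\{v,w\}$, and ``$(c,\mathrm{out})$ emitted by $v$'' is, from $w$'s local viewpoint, indistinguishable from ``$(c,\mathrm{out})$ emitted by $w$'' unless the tag is interpreted relative to the sender. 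The fix is to have $v$ send color $(c, v\text{-}\mathrm{out})$ understood globally, i.e.\ to use the ordered structure of $E$ only in \emph{defining} which colors live on which unordered edge; since $E(v,w)$ and $E(w,v)$ are put on the \emph{same} unordered edge but as formally distinct colors, no ambiguity arises provided we take care that the color symbol itself encodes the direction unambiguously. I would spell this out carefully, as it is the one place where ``directed edges are just notational convenience'' could fail if done carelessly; everything else (finitariness, computability, the induction, preservation of $X$) is routine. Since the paper states this section's arguments are only sketched, I would present steps (1)–(2) explicitly and steps (3)–(4) briefly, deferring full verification to the promised extended version.
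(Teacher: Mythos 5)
There is a genuine gap, and it lies exactly where you predicted: the direction-tagging of colors. In this model a receiving vertex learns only the \emph{set} of colors it has received, never the identity of the sender, and the lookup table depends only on the vertex's \emph{label}. This makes any construction that keeps the vertex set and the labeling and merely re-tags colors impossible in general. Concretely: take two vertices $v,w$ with the same label, $E(v,w)=\{c\}$ and $E(w,v)=\emptyset$. Any symmetric recoloring assigns a single set $D=E_S(v,w)=E_S(w,v)$ to the unordered edge. If at some stage $v$ and $w$ are in identical local configurations (same label, symbol, state, received colors), they must emit the same set of colors, and each then receives exactly the same set $D\cap(\text{emitted colors})$; so they remain in identical configurations forever, whereas in $\T$ they must diverge ($w$ receives the $c$-pulse and $v$ does not). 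Your proposed repair --- making the color symbol itself encode the source vertex, ``$(c,v\text{-}\mathrm{out})$ understood globally'' --- escapes this only by giving each vertex its own label and its own color symbols, but that is blocked by the definition of a colored graph: $\gamma$ must assign a \emph{finite} set of colors to each label, while vertices are allowed to have infinitely many in-neighbors (e.g.\ vertex $0$ of $\G_{\mfL}$), so the set of source-tagged colors arriving at such a vertex would be infinite. Even on finite-degree graphs this repair destroys finitariness and forces an infinite label set, which undercuts the point of the proposition.

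The paper's proof takes a different and unavoidable route: it enlarges the vertex set. For each ordered pair $(v,w)$ it adds two fresh vertices $e_{(v,w)}$ and $e^*_{(v,w)}$, with symmetric $c$-colored edges joining $v$ to $e_{(v,w)}$, $v$ to $e^*_{(v,w)}$, $e_{(v,w)}$ to $e^*_{(v,w)}$, and $e^*_{(v,w)}$ to $w$, whenever $c\in E(v,w)$. Direction is then recovered by \emph{timing} rather than by color: $e^*_{(v,w)}$ forwards a $c$-pulse to $w$ only after receiving $c$-pulses in two successive timesteps, which happens precisely when $v$ originated the pulse (reaching $e^*_{(v,w)}$ first along the length-$1$ path and then along the length-$2$ path through $e_{(v,w)}$); a pulse originating at $w$ arrives only once and is not forwarded back. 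Each step of $\T$ is simulated by three steps of $\T_S$, and since the new vertices lie outside $G$, the same set $X$ witnesses $\<\G_S,X\>$-computability. If you want to salvage your write-up, you should replace the color-tagging step with a gadget of this kind; the rest of your outline (checking the colored-graph axioms, the step-indexed induction, preservation of $X$) then goes through.
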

		\begin{proofsketch}
	Given a computable graph $\G$ and a $\G$-machine $\T$,
define $\G_S$ to be the \emph{symmetric} graph with underlying set $G_S = G \cup \{e_{(v,w)}, e^*_{(v, w)} \st v, w\in G \}$, set of colors $C$, and edge coloring $E_S$ defined such that
	
\begin{align*}
	E(v,w) &=
	E_S(v, e_{(v,w)}) = E_S(e_{(v,w)}, e^*_{(v, w)}) \\
	&= E_S(e^*_{(v,w)}, w) = E_S(v, e^*_{(v,w)})
\end{align*}
for all $v, w \in G$.

In particular, whenever $c \in E(v,w)$ we have that (i) $e^*_{(v,w)}$ is connected to $w$ via an edge of color $c$, and (ii) there are paths of length 1 and length 2 (of color $c$) connecting $v$ and $e^*_{(v, w)}$. 

We now define $\T_S$ to be the $\G_S$-machine that simulates $\T$ as follows. Every three timesteps of $\T_S$ corresponds to one timestep of $\T$. State transitions in $\T_S$ between elements of $G$ are the same as in $\T$ (except that each transition takes three timesteps to be processed). However, any time that 
	a vertex $v$ sends a $c$-pulse to $w$
	within $\T$, 
	the corresponding pulse in $\T_S$ instead goes from $v$ to both $e_{(v,w)}$ and $e^*_{(v,w)}$. When $e_{(v,w)}$ receives a $c$-pulse, it sends a $c$-pulse to $e^*_{(v,w)}$. However, $e^*_{(v,w)}$ only sends a $c$-pulse to $w$ after it has itself received $c$-pulses in \emph{two} successive timesteps (i.e., first one from $v$, and then one from $e_{(v,w)}$). 

In this way, in $\T_S$ when $v$ sends out a $c$-pulse, this causes (three timesteps later) a $c$-pulse to reach all vertices $w \in G$ for which $c\in E(v,w)$. However, even though in $\G_S$ the relation $E_S$ is symmetric, when $v$ sends out a $c$ pulse it will not reach any $w \in G$ such that $E(w,v)$ holds. 
	Hence the behavior of $\T_S$ within $G$ simulates the (slowed-down) behavior of $\T$.
		\end{proofsketch}

%%%%  %%%%  %%%%  %%%%  %%%%  %%%%  %%%%  %%%%  
\section{Representations of other computational models via graph machines}
\label{representations-sec}
%%%%  %%%%  %%%%  %%%%  %%%%  %%%%  %%%%  %%%%  

We now describe several other models of computation, mainly on graphs, and describe how they can be viewed as special cases of graph machines. These examples provide further evidence for graph machines being a universal model of computation on graphs.

%%%%  %%%%  %%%%  %%%%  %%%%  %%%%  %%%%  %%%%  
\subsection{Ordinary Turing machines}
\label{otm-subsec}
%%%%  %%%%  %%%%  %%%%  %%%%  %%%%  %%%%  %%%%  

We begin by showing how to simulate an ordinary Turing machine by a graph Turing machine.

Let $M$ be a Turing machine (with a finite set of states and finite transition function, as usual).
Then there is an equivalent graph machine whose underlying graph 
represents a Turing machine tape, as we now describe.
The underlying graph is a one-sided chain with underlying set $\{-1\} \cup \Nats$.
Every vertex in $\Nats$ has degree $2$ (with edges both to and from its predecessor on the left and successor on the right), while $-1$ is connected only to $0$.
There are two labels, one of which holds of all of $\Nats$ and other of which holds at $-1$. 

Any vertex of $\Nats$ which is in the initial state and has not received any pulse does nothing. When the vertex $-1$ is in the initial state and displays $1$, it sends a pulse to $0$ to signal the creation of the read/write head ``above $0$''.
The presence of a head above a vertex is encoded via the state of that vertex. 
The lookup table of the graph machine will ensure that there is a unique head, above precisely one vertex, at each timestep following any starting configuration with $1$ displayed at $-1$.

At any subsequent timestep, only the vertex with the head will send out pulses.
Between two any adjacent vertices in $\Nats$, there are sufficiently many edge colors to transmit the current state of $M$, and so at each timestep, the vertex with the head 
uses the transition function of $M$ to determine the location of the head at the next time step, and (if the head needs to move according to $M$) transmits the appropriate signal (containing the current state of $M$ as well as signaling that the head is above it now), to its neighbor, and accordingly adjusts its own state. If the head does not need to move, the vertex with the head merely sets its own state and displayed symbol according to the transition function of $M$.

One can show that not only does this embedding yield a graph machine that computes functions of the same Turing degree as
the original Turing machine $M$, but that the output of this graph machine
produces (on $\Nats$) the exact same
function as $M$, moreover via a weak bisimulation (in which the function is computed with 
merely a small linear time overhead).

The doubly-infinite one-dimensional read/write tape of an ordinary Turing machine has cells indexed by $\Z$, the free group on one generator, and in each timestep the head moves according to the generator or its inverse. This interpretation of a Turing machine as a $\Z$-machine has been generalized to $H$-machines for arbitrary finitely generated groups $H$ by \cite{Aubrun2016}, and our simulation above extends straightforwardly to this setting as well.

One might next consider how cleanly one might embed various extensions of Turing machines where the tape is replaced by a graph, such as Kolmogorov--Uspensky machines \cite{KU}, Knuth's pointer machines \cite[pp.\ 462--463]{MR0286317}, and Sch\"onhage's storage modification machines \cite{MR584506}. For a further discussion of these and their relation to sequential abstract state machines, see \cite{gurevich1993kolmogorov} and \cite{MR1858823}.

%%%%  %%%%  %%%%  %%%%  %%%%  %%%%  %%%%  %%%%  
\subsection{Cellular automata}
\label{cellular-subsec}
%%%%  %%%%  %%%%  %%%%  %%%%  %%%%  %%%%  %%%%  
We now consider cellular automata; for background,
see, e.g., the book \cite{toffoli1987cellular}.
Cellular automata (which we take to be always
finite-dimensional, finite-radius
and with finitely-many states) can be naturally simulated by graph machines, moreover of constant degree. In particular, 
\cref{Constant degree implies computable}
applies to this embedding.

Not only is the evolution of every such cellular automata computable (moreover via this embedding as a graph machine), but there are particular automata whose evolution encodes the behavior of a universal Turing machine
	(\cite{MR2211290} and \cite{MR2493991}).
Several researchers have also considered the possibility of expressing intermediate Turing degrees via this evolution (\cite{BaldwinReview}, \cite{CohnReview}, and \cite{MR1964811}). Analogously, one might ask which Turing degrees can be expressed in the evolution of graph machines.

We now describe this embedding.
Cells of the automata are taken to be the vertices of the graph, and cells are connected to its ``neighbors'' (other cells within the given radius)
by a collection of edges (of the graph) whose labels encode their relative position (e.g., ``1 to the left of'') and all possible cellular automaton states.  (In particular, every vertex has the same finite degree.)
The displayed symbol of each vertex encodes the cellular automaton state of that cell.

The rule of the cellular automaton is encoded in the lookup table so as to achieve the following:
At the beginning of each timestep, each cell announces its state by virtue of its vertex sending 
out a pulse to the vertices of the neighboring cells, along edges whose labels encode this state.  
(If during some timestep a vertex does not receive a pulse from the direction corresponding to a neighboring cell, then it assumes that the vertex corresponding to the neighboring cell is in its initial state and is displaying $0$.)
Each vertex then updates its displayed symbol based on how the corresponding cell should update its state, based on the states of its neighbors, according to the rule of the original cellular automaton.

Note that this encoding produces a bisimulation between the original cellular automaton and the graph machine built based on it.

%%%%  %%%%  %%%%  %%%%  %%%%  %%%%  %%%%  %%%%  
\subsection{Parallel graph dynamical systems}
\label{gds-subsec}
%%%%  %%%%  %%%%  %%%%  %%%%  %%%%  %%%%  %%%%  

Parallel graph dynamical systems \cite{MR3332130} can be viewed as essentially equivalent to the finite case of graph Turing machines, as we now describe.
Finite cellular automata 
can also be viewed as a special case of parallel graph dynamical systems, 
as can finite boolean networks,
as noted in \cite[\S2.2]{MR3332130}.
For more on parallel graph dynamical systems, see \cite{MR3339901}, \cite{MR3332130}, and \cite{MR2078971}.

A parallel graph dynamical system specifies a finite (possibly directed) graph and an evolution operator (determining how a given tuple of states for the vertices of the graph transitions into a new tuple). The evolution operator is required to be local, in the sense of determining the new state of a vertex given only its state and those of its adjacencies.

We may embed a parallel graph dynamical system as a graph machine similarly to how we embedded cellular automata above. 
However, because different vertices have non-isomorphic neighborhoods, we can no longer label edges connecting a vertex to a given neighbor based on the ``direction'' of this neighbor. We therefore require a different collection of label types for every vertex of the graph, which are used to signal the source of the edge.

This embedding produces a bisimulation between the given parallel graph dynamical system and the graph machine built from it.
Note that this embedding also works for the natural infinite extension of parallel graph dynamical systems, where each vertex is required to have finite in-degree. This restriction on in-degree ensures that each vertex of the corresponding graph machine has only finitely many edge colors, even though the set of all edge colors may be infinite.

In contrast, it is not immediately clear how best to encode an arbitrary (parallel) abstract state machine \cite{MR2002271} as a graph Turing machine (due to the higher arity relations of the ASM).

%%%%  %%%%  %%%%  %%%%  %%%%  %%%%  %%%%  %%%%  
\section{Possible extensions}
\label{Possible-extensions}
%%%%  %%%%  %%%%  %%%%  %%%%  %%%%  %%%%  %%%%  

We now describe several possible extensions of graph Turing machines that may be worth studying.

It would be interesting to develop a non-deterministic version of graph Turing machines, where certain pulses are allowed, but not required, to be sent.
		We expect that such a framework would naturally encompass an infinitary version of Petri nets \cite{DBLP:journals/csur/Peterson77}, as well as the machines described by
\cite{MR3163227}.

	Another interesting extension would be a randomized version of graph Turing machines, where the pulses fire independently according to a specified probability distribution. In this setting, one could then study the joint distribution of overall behavior. Randomized graph Turing machines might encompass dynamical Bayesian networks \cite{MR2704368} and other notions of probabilistic computation on a graph.

	Our framework involves underlying graphs that are specified before the computation occurs. Is there a natural way to extend our framework to allow for new nodes to be added, destroyed, duplicated, or merged?  Such an extension might naturally encompass infinitary generalizations of 
models of concurrency and parallelism based on graph rewrite rules, such as
		interaction nets \cite{DBLP:conf/popl/Lafont90} and bigraphs \cite{DBLP:books/daglib/0022395}.

%%%%  %%%%  %%%%  %%%%  %%%%  %%%%  %%%%  %%%%  
\section{Open questions}
\label{Open Questions}
%%%%  %%%%  %%%%  %%%%  %%%%  %%%%  %%%%  %%%%  

We conclude with several open questions.

\begin{itemize}

	\item Does every Turing degree below $\mbf{0}^{(\w)}$ contain a graph computable function?  So far, we merely know that every arithmetical degree contains a graph computable function ---  but there are Turing degrees below $\mbf{0}^{(\w)}$ that are not below $\mbf{0}^{(n)}$ for any $n\in\Nats$.

	\item Are there graph computable functions that are not graph computable in constant time?  In particular, can $\mbf{0}^{(\w)}$ be graph computed in constant time? (The construction in \cref{omega-jump} is linear-time.)

	\item Are there graph computable functions that are not graph computable by a graph machine with finitary underlying graph? 

\end{itemize}

%%%%%%%%%%%%%%%%%%%%%%%%%%%%%%%%%%
\section*{Acknowledgements}
%%%%%%%%%%%%%%%%%%%%%%%%%%%%%%%%%%
The authors would like to thank Tomislav Petrovi\'c, Linda Brown Westrick, and the anonymous referees of earlier versions for helpful comments. 

\vspace*{20pt}
%%%%%%%%%%%%%%%%%%%%%%%%%%%%%%%%%%

%% BIBLIOGRAPHY %%
%\newpage
\bibliographystyle{amsnomr}
\bibliography{bibliography}

\end{document}